\newtheorem{theorem}{Theorem}[subsection]
\newtheorem{lemma}[theorem]{Lemma}
\newtheorem{corollary}[theorem]{Corollary}
\newtheorem{definition}[theorem]{Definition}
\newtheorem{remark}[theorem]{Remark}
\newtheorem{claim}[theorem]{Claim}
\newtheorem{proposition}[theorem]{Proposition}
\begin{document}
\author{Leonid A. Bunimovich}
\address{School of Mathematics, Georgia Institute of Technology,
        Atlanta, GA 30332}
\email{bunimovh@math.gatech.edu}
\author{Alex Yurchenko}
\address{School of Mathematics, Georgia Institute of Technology,
        Atlanta, GA 30332}
\email{yurchenk@math.gatech.edu}
\title[Where to place a hole to achieve a maximal escape rate.]
{Where to place a hole to achieve a maximal escape rate.}
\thanks{}
\subjclass{}
\keywords{}
\date{\today}
\numberwithin{equation}{section}

\begin{abstract}
A natural question of how the survival probability depends upon a position of a hole was seemingly never
addressed in the theory of open dynamical systems. We found that this dependency could be very essential. The
main results are related to the holes with equal sizes (measure) in the phase space of strongly chaotic maps.
Take in each hole  a periodic point of minimal period. Then the faster escape occurs through the hole where this
minimal period assumes its maximal value. The results are valid for all finite times (starting with the minimal
period) which is unusual in dynamical systems theory where typically statements are asymptotic when time tends to
infinity. It seems obvious that the bigger the hole is the bigger is the escape through that hole. Our results
demonstrate that generally it is not true, and that specific features of the dynamics may play a role comparable
to the size of the hole.
\end{abstract}
\maketitle

\section{Introduction.}
The theory of open dynamical systems is (naturally) much less
developed than of the closed ones. Basically so far the problems
studied were on the existence of conditionally invariant measures,
their properties, and the existence of the escape rates
\cite{Baladi}, \cite{Bruin}, \cite{Chernov1}, \cite{Chernov2},
\cite{Chernov3}, \cite{Collet}, \cite{Dem1}, \cite{Liverani},
\cite{Yorke}, \cite{Chernov4}.

In this paper we address a natural question which, to the best of
our knowledge, has not been studied so far. Obviously, if one
enlarges a hole then the escape rate of the orbits will increase as
well (or, at least, it cannot decrease). Consider, however, two
holes of the same size (measure), placed at the different positions
in the phase space of the dynamical system under study. Would the
escape rates through these holes be equal?

We demonstrate that the answer to this question could be both "yes" and "no".
In case when there exists a group of measure preserving translations of the phase space which commute
with the dynamics the answer is "yes". It is quite natural and intuitive answer which
is justified in Section \ref{sec_rotation} of our paper.
However the dynamics of these systems is quite regular.

Much less trivial is the question of what other factors, besides the size of the hole,
can influence the escape rate.
In particular, what can generate different escape rates through two holes of the same size?

Consider a system with strongly chaotic dynamics. For many classes of such systems it is known
that there exists infinitely many periodic orbits of infinitely many periods and
that the periodic orbits are everywhere dense in the phase space.
Therefore in each hole there are infinitely many periodic points.

Our approach is based on the idea that the faster escape occurs
through a hole whose preimages overlap less than the ones of another
hole. This idea leads to the following procedure (algorithm): 1)
find in each hole a periodic point with minimal period; 2) compare
these periods. We claim that the escape will be faster through a
hole where this minimal period is bigger. This claim is justified
for various classes of dynamical systems with strongly chaotic
behavior and Markov holes in Section \ref{s_3254125}.

We also computed the local escape rate and demonstrated that for all
nonperiodic points this value is the same while at the periodic
points the escape "slows down" and it assumes smaller values at the
periodic points with smaller period.

Thus we demonstrate that the dynamical factors could be as important for the escape as the size of the hole.
In fact it is possible that the escape rate through a larger hole could be less
than the escape rate through a smaller hole.

An important and a new feature of our results is that they hold for
all finite times starting with some moment of time, in comparison to
the usual setup in the theory of dynamical systems where one deals
with the asymptotic properties at infinite time.

For more general classes of dynamical systems not only the
distribution of the periodic points, but other characteristics of
dynamics, e.g. distortion, may also contribute to the process of
escape. This will be considered in a future publication.

The structure of the paper is the following one. Section
\ref{sec:def} contains necessary definitions and some auxiliary
results. Section \ref{sec_rotation} deals with the case where escape
rate does not depend on the position of the hole. Section
\ref{s_3254125} presents the main results of the paper. Section
\ref{sec:more} deals with some generalization and, finally, Section
\ref{sec:end} contains concluding remarks.

\section{Definitions and Some Technical Results.} \label{sec:def}
Consider a discrete time dynamical system given by a measure-preserving map
$$
\hat{T}: \hat{M} \rightarrow \hat{M},
$$
where $\hat{M}$ is a Borel probability space with the measure $\lambda$.
Let $\mathcal B$ be the Borel $\sigma$-algebra on $\hat{M}$
with respect to $\lambda$.

\subsection{Recurrences}

Here we define some notions related to the recurrence properties of the dynamical system.

\begin{definition}
The \emph{Poincar\'{e} recurrence time} of a subset $A \in \mathcal B$ of a positive measure is a positive
integer $\tau(A) \le +\infty$ given by
\begin{equation}
\tau_{\hat{T}} (A) = \inf_{n \ge 1} \{ n :  \lambda(\hat{T}^n(A) \cap A ) > 0 \}.
\end{equation}
\end{definition}
If there is no ambiguity about which map we are considering, then we drop the subscript and use $\tau(A)$
instead. If the Poincar\'{e} recurrence time is finite, then it is the smallest integer $n$ such that the $n$th
iterate of $A$ under $\hat{T}$ intersects $A$ nontrivially (in this case nontrivially means that intersection has
a non-zero measure).

According to the Poincar\'{e} Recurrence Theorem (see, for example Theorem 1.4 in \cite{Walters}) for the spaces
of finite measure the Poincar\'{e} recurrence time of any measurable set of positive measure is finite.

Next, we list a few properties of Poincar\'{e} recurrence time which will be used later. These statements follow
easily from the definition.

\begin{proposition} Let $A$ and $B$ be two measurable sets. Then
\begin{enumerate}
    \item if $A \subset B$ then $\tau(A) \ge \tau(B)$;
    \item $\tau(A) = \tau(\hat{T}^{-1}(A))$,
\end{enumerate}
where $\hat{T}^{-1}(A)$ is a complete  preimage of $A$.
\end{proposition}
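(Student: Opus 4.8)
The plan is to rewrite the recurrence time as an infimum over an index set and reduce both claims to set inclusions. For a measurable set $C$ of positive measure put $S_C = \{\, n \ge 1 : \lambda(\hat{T}^n(C) \cap C) > 0 \,\}$, so that by definition $\tau(C) = \inf S_C$, and by the Poincar\'e Recurrence Theorem $S_C$ is a nonempty set of positive integers. With this notation each assertion becomes a comparison of the sets $S_A$ and $S_B$: since for nonempty subsets of the positive integers a smaller set has the larger (or equal) infimum, it suffices to control the inclusions between these index sets.

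For part (a), suppose $A \subset B$. Then $\hat{T}^n(A) \subset \hat{T}^n(B)$ for every $n$, and since also $A \subset B$ we get $\hat{T}^n(A) \cap A \subset \hat{T}^n(B) \cap B$. Consequently $\lambda(\hat{T}^n(A) \cap A) > 0$ forces $\lambda(\hat{T}^n(B) \cap B) > 0$, i.e. $S_A \subset S_B$. Taking infima yields $\tau(A) = \inf S_A \ge \inf S_B = \tau(B)$, which is the desired inequality; note that this half uses only monotonicity of $\lambda$ and no measure preservation.

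For part (b), the idea is to transport the recurrence condition by one step of the dynamics using that $\hat{T}$ preserves $\lambda$. The cleanest route is first to record the set identity $\hat{T}^n(C) \cap C = \hat{T}^n\big(C \cap \hat{T}^{-n}(C)\big)$, valid for any $C$ and any $n$, and to use it together with measure preservation to see that the condition $\lambda(\hat{T}^n(C) \cap C) > 0$ is equivalent to $\lambda(C \cap \hat{T}^{-n}(C)) > 0$; this recasts $S_C$ entirely in terms of complete preimages. Writing $B = \hat{T}^{-1}(A)$ and computing in this preimage form, $B \cap \hat{T}^{-n}(B) = \hat{T}^{-1}(A) \cap \hat{T}^{-(n+1)}(A) = \hat{T}^{-1}\big(A \cap \hat{T}^{-n}(A)\big)$, so by $\lambda(\hat{T}^{-1}(E)) = \lambda(E)$ the sets $A \cap \hat{T}^{-n}(A)$ and $B \cap \hat{T}^{-n}(B)$ have equal measure for every $n$. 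Hence $S_A = S_B$, and taking infima gives $\tau(A) = \tau(\hat{T}^{-1}(A))$.

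I expect the only genuine obstacle to be the passage between the forward-image formulation $\lambda(\hat{T}^n(C)\cap C)$ used in the definition and the preimage formulation $\lambda(C \cap \hat{T}^{-n}(C))$ that makes measure preservation directly applicable. The implication $\lambda(C \cap \hat{T}^{-n}(C)) > 0 \Rightarrow \lambda(\hat{T}^n(C) \cap C) > 0$ is immediate from the displayed identity together with $\lambda(\hat{T}^n F) \ge \lambda(F)$ (which itself follows from $F \subset \hat{T}^{-n}(\hat{T}^n F)$ and measure preservation). The reverse implication is the delicate one for noninvertible $\hat{T}$, since in general images of null sets need not be null; here I would either invoke the regularity of the maps considered in the paper or, when $\hat{T}$ is invertible, note that $\hat{T}^{-n}(\hat{T}^n(C) \cap C) = C \cap \hat{T}^{-n}(C)$ forces the two measures to coincide exactly, which makes the equivalence and hence part (b) immediate.
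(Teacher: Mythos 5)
Your proof is correct, and since the paper offers no proof at all (it merely asserts that "these statements follow easily from the definition"), your argument is essentially the intended one, written out in full: part (a) is pure monotonicity of $\lambda$ under the inclusion $\hat{T}^n(A)\cap A \subset \hat{T}^n(B)\cap B$, and part (b) reduces to the exact identity $\hat{T}^{-1}(A)\cap \hat{T}^{-n}(\hat{T}^{-1}(A)) = \hat{T}^{-1}\bigl(A\cap \hat{T}^{-n}(A)\bigr)$ plus measure preservation. The one point where you go beyond the paper is also the one genuine subtlety: the paper's definition of $\tau$ uses forward images, and passing from $\lambda(\hat{T}^n(C)\cap C)>0$ to $\lambda(C\cap\hat{T}^{-n}(C))>0$ requires that $\hat{T}^n$ not inflate null sets, which fails for arbitrary measure-preserving maps but holds for every system actually treated in the paper (rotations and the baker's map are invertible; the doubling, tent, and logistic maps are piecewise smooth local homeomorphisms, hence map null sets to null sets). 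You flag this honestly and resolve it correctly in the invertible case via $\hat{T}^{-n}(\hat{T}^n(C)\cap C) = C\cap\hat{T}^{-n}(C)$, so there is no gap for the paper's purposes.
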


For $n \ge 0$ and $A \in \mathcal B$, define the following (measurable) sets \cite{Dem1},
\begin{align*}
\Omega_n (A) = \left\{ x \in \hat{M} : \exists j \in \mathbb N, 0 \le j \le n, \hat{T}^j x \in A \right\} =  \cup_{i=0}^{n} \hat{T}^{-i}(A), \\
\Theta_n (A) = \left\{ x \in \hat{M} : \hat{T}^n x \in A, \hat{T}^j x \notin A, j = 0, \ldots, n-1 \right\},
\end{align*}
where $\hat{T}^{-i}(A)$ is a complete preimage of $A$ under $\hat{T}^i$. The set $\Omega_n(A)$ consists of all
points which orbits enter $A$ after no more then $n$ iterates. The set $\Theta_n(A)$ consists of all points which
orbits enter $A$ at first time exactly on $n$th iterate. Note that $\Omega_0(A) = A$ and $A \subset \Omega_n(A)$,
$\forall n \in \mathbb N$. It is easy to see that these sets have the following properties.

\begin{proposition}Let $A$ be a measurable set. Then
\begin{enumerate}
    \item $\Omega_i(A)$ is a nondecreasing sequence of sets;
    \item $\Theta_i(A) \cap \Theta_j(A)  = \emptyset$ if $i \ne j$, $i,j > 0$;
    \item $\Omega_n(A) = \cup_{i=0}^n \Theta_i(A)$.
\end{enumerate}
\end{proposition}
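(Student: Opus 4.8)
The plan is to verify all three statements directly from the definitions, treating each as an elementary consequence of the two representations $\Omega_n(A) = \cup_{i=0}^n \hat{T}^{-i}(A)$ and $\Theta_n(A) = \{x : \hat{T}^n x \in A, \ \hat{T}^j x \notin A \text{ for } 0 \le j \le n-1\}$. Since no measure-theoretic input is needed here (only the set structure matters), each part reduces to a short inclusion or contradiction argument, and I would keep everything at the level of point membership.

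For part (1) I would read off monotonicity straight from the union form: since $\Omega_{n+1}(A) = \Omega_n(A) \cup \hat{T}^{-(n+1)}(A)$, adjoining one more preimage can only enlarge the set, so $\Omega_n(A) \subseteq \Omega_{n+1}(A)$ for every $n \ge 0$. For part (2) I would argue by contradiction using the first-entry characterization of $\Theta$. Assuming without loss of generality that $i < j$, take $x \in \Theta_i(A) \cap \Theta_j(A)$. Membership in $\Theta_j(A)$ forces $\hat{T}^k x \notin A$ for all $0 \le k \le j-1$, and since $i \le j-1$ this yields $\hat{T}^i x \notin A$; but membership in $\Theta_i(A)$ requires $\hat{T}^i x \in A$, a contradiction. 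Hence the intersection is empty.

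For part (3) I would establish the two inclusions separately. The inclusion $\cup_{i=0}^n \Theta_i(A) \subseteq \Omega_n(A)$ is immediate, since any $x \in \Theta_i(A)$ with $i \le n$ satisfies $\hat{T}^i x \in A$ and therefore lies in $\Omega_n(A)$. For the reverse inclusion I would introduce the first-entry time: given $x \in \Omega_n(A)$, the set $\{j : 0 \le j \le n, \ \hat{T}^j x \in A\}$ is nonempty, so I let $i$ be its minimum. Then $\hat{T}^i x \in A$ while $\hat{T}^k x \notin A$ for all $k < i$, which is exactly the condition defining $x \in \Theta_i(A)$ with $0 \le i \le n$. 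Combining the two inclusions gives the claimed equality.

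The computations are routine and there is no genuine obstacle; the one idea worth isolating is the first-entry time used in part (3), namely the passage from ``the orbit visits $A$ at some moment up to time $n$'' to ``the orbit visits $A$ for the first time at a specific moment $i \le n$.'' This same minimality observation is precisely what underlies the disjointness in part (2), so the two parts share a common mechanism, and together they exhibit $\Omega_n(A)$ as the disjoint union of the first-entry layers $\Theta_0(A), \ldots, \Theta_n(A)$.
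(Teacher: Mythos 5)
Your proof is correct; the paper itself omits the argument entirely (stating only that these properties are "easy to see"), and your point-by-point verification via the first-entry time is exactly the standard argument the authors had in mind. Nothing to add.
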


\subsection{Open dynamical systems.}\label{sec:holes}

Let $A$ be a measurable set and let $M = \hat{M} \backslash A$.
We define an open dynamical system (system with a "hole" $A$) as a map
$$
T: M \rightarrow \hat{M},
$$
where $T : = \hat{T}_{|M}$ is a restriction of $\hat{T}$ to $M$. We
keep track of the orbits while they stay outside the "hole" $A$, and
after they enter a hole we no longer consider these orbits (they
just "disappear"). So we can talk about iterates of $T$ instead of
$\hat{T}$ as long as orbit stays outside $A$. Alternatively, one can
redefine $\hat{T}$ in such a way that it is an identity map on $A$.
We will use the former approach. Note that we use a hat over a
letter to denote an object in a closed system and letters without a
hat for corresponding objects in the open system.

\subsection{Escape rate.}

\begin{definition}
The (exponential) escape rate into the hole $A$
is a nonnegative number $\rho(A)$ given by
\begin{equation}
\rho(A) = - \lim_{n \rightarrow \infty} \frac{1}{n} \ln  \lambda \left( \hat{M} \backslash \Omega_n (A) \right) ,
\end{equation}
if this limit exists.
\end{definition}

The number $\lambda \left( \hat{M} \backslash \Omega_n (A) \right) = 1 - \lambda \left( \Omega_n (A) \right)$
(sometimes called a \emph{survival probability}) is the measure of the set that does not escape into the hole in
$n$ iterations. Hence, the escape rate represents the average rate at which orbits enter the hole. The larger the
escape rate is, the faster the "mass" escapes from the system into the hole $A$.

We will only consider  systems in which almost every orbit
eventually enters the hole, i.e. systems which satisfy the following condition
\begin{equation}\label{H1}
\sum_{i=0}^{\infty} \lambda (\Theta_i (A)) = 1. \tag{H1}
\end{equation}
Any ergodic system would be an example of such a system. In that
case property \ref{H1} holds for any measurable hole of positive
measure. On the other hand if we consider a system with a globally
attracting set $A$ then the property \ref{H1} holds only for that
set $A$ and any set which contains $A$.

The next proposition lists a few simple but useful properties of the escape rate.

\begin{proposition} \label{prop_23423}
Let $A$ and $B$ be two measurable sets. Assume that $\rho(A)$ and
$\rho(B)$ exist. Then,
\begin{enumerate}
    \item if $A \subset B$ then $\rho(A) \le \rho(B)$;
    \item $\rho(A) = \rho(\hat{T}^{-1}(A))$;
    \item for any finite positive integer $m$, $\rho(A) = \rho \left( \cup_{i=0}^m \hat{T}^{-i}A \right)$;
    \item if $B \subset \hat{T}^{-k}A$ for some $k >0$ then $\rho(A \cup B) = \rho(A)$;
    \item if there exists $m$ such that $\hat{M} = \cup_{i=0}^m \hat{T}^{-i}A$
        then $\rho(A) = + \infty$.
\end{enumerate}
\end{proposition}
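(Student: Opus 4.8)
The plan is to reduce all five statements to elementary comparisons of the survival sequence $s_n(A) := \lambda(\hat M \setminus \Omega_n(A))$, exploiting set inclusions among the $\Omega_n$ together with the measure-preserving property of $\hat T$. Statement (1) is immediate: if $A \subset B$ then $\hat T^{-i}(A) \subset \hat T^{-i}(B)$ for every $i$, so $\Omega_n(A) \subset \Omega_n(B)$ and hence $s_n(B) \le s_n(A)$ for all $n$; taking $-\frac{1}{n}\ln(\cdot)$ and passing to the limit reverses the inequality and yields $\rho(A) \le \rho(B)$.

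For statement (2) I would first record the commutation identity $\Omega_n(\hat T^{-1}A) = \hat T^{-1}\Omega_n(A)$, which follows by writing $\Omega_n(\hat T^{-1}A) = \bigcup_{i=0}^n \hat T^{-(i+1)}A = \hat T^{-1}\bigcup_{i=0}^n \hat T^{-i}A$. Since $\hat T^{-1}\hat M = \hat M$, taking complements gives $\hat M \setminus \Omega_n(\hat T^{-1}A) = \hat T^{-1}(\hat M \setminus \Omega_n(A))$, and because $\hat T$ preserves $\lambda$ the two sets have equal measure. Thus $s_n(\hat T^{-1}A) = s_n(A)$ for every $n$, so the escape rates coincide. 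This is the only place where measure preservation is genuinely used.

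The core of the argument is statement (3), from which (4) follows. The key is the semigroup identity $\Omega_n(\Omega_m(A)) = \Omega_{n+m}(A)$: expanding $\Omega_n(\Omega_m(A)) = \bigcup_{i=0}^n \bigcup_{j=0}^m \hat T^{-(i+j)}A$ and noting that the exponents $i+j$ range over exactly $\{0,1,\dots,n+m\}$. Hence $s_n(\Omega_m(A)) = s_{n+m}(A)$, and the rates agree because the finite shift is invisible to the exponential scale:
\[
\frac{1}{n}\ln s_{n+m}(A) = \frac{n+m}{n}\cdot\frac{1}{n+m}\ln s_{n+m}(A) \longrightarrow 1\cdot(-\rho(A)),
\]
using that $\rho(A)$ is assumed to exist. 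Statement (4) then follows by a sandwich: $B \subset \hat T^{-k}A$ forces $A \subset A\cup B \subset \Omega_k(A)$, so (1) and (3) give $\rho(A) \le \rho(A\cup B) \le \rho(\Omega_k(A)) = \rho(A)$. Finally, (5) is immediate, since $\hat M = \Omega_m(A)$ makes $\Omega_n(A) = \hat M$ for all $n \ge m$, whence $s_n(A) = 0$ and $-\frac{1}{n}\ln s_n(A) = +\infty$ eventually.

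The steps are routine, so the only real subtlety lies in statement (3): one must verify that replacing the index $n$ by $n+m$ inside the limit does not change the rate, which is exactly the Cesàro-type rescaling above and relies essentially on the standing hypothesis that the defining limit exists. A secondary point requiring a convention rather than real work is statement (5), where $s_n(A)=0$ forces the $-\frac{1}{n}\ln(\cdot)$ expression to equal $+\infty$; this is consistent with the definition allowing $\rho$ to be infinite.
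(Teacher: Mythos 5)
Your proposal is correct. Note that the paper itself states Proposition \ref{prop_23423} without any proof, treating all five items as immediate consequences of the definition, so there is no argument of the authors' to compare against; your write-up simply supplies the details they omit, and it does so by the natural route: monotonicity of $\Omega_n$ for (1), measure preservation applied to $\hat T^{-1}(\hat M\setminus\Omega_n(A))=\hat M\setminus\Omega_n(\hat T^{-1}A)$ for (2), the semigroup identity $\Omega_n(\Omega_m(A))=\Omega_{n+m}(A)$ plus the invisibility of a finite index shift for (3), a squeeze for (4), and the convention $-\ln 0=+\infty$ for (5). The only point worth tightening is in (4): as stated, you invoke part (1) for the set $A\cup B$, but (1) presupposes that $\rho(A\cup B)$ exists, which is not among the hypotheses. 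The fix is exactly the sandwich you already have, run one level lower: from $\Omega_n(A)\subset\Omega_n(A\cup B)\subset\Omega_n(\Omega_k(A))$ one gets
\[
-\tfrac{1}{n}\ln s_n(A)\;\le\;-\tfrac{1}{n}\ln s_n(A\cup B)\;\le\;-\tfrac{1}{n}\ln s_{n+k}(A),
\]
and since both outer sequences converge to $\rho(A)$, the middle limit exists and equals $\rho(A)$. With that phrasing the existence of $\rho(A\cup B)$ is a conclusion rather than an assumption, and the proof is complete.
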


The first part of the proposition says that the size of the hole is one of the
factors that determines the escape rate.
As we will see later, it is not necessarily the only one or even the dominant one.
Moreover, c) and d) state that, in principle, we can have a system in which holes of different size
have the same escape rate.

Instead of looking at the measure of the set that does not
enter a hole during the first $n$ iterations, sometimes it is more convenient to consider
the set which enters the hole for the first time on exactly $n$th iteration (but not earlier).
The following lemma illustrates how we can accomplish that.

\begin{lemma} \label{l_eq}
Suppose that condition \ref{H1} holds and the escape rate,
$\rho(A)$, exists. Then
$$
\rho(A) = - \lim_{n \rightarrow \infty} \frac{1}{n} \ln \lambda (\Theta_n(A)).
$$
\end{lemma}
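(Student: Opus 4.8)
The plan is to reduce the statement to two elementary identities linking the first-entry times to the survival probability, and then to an analytic fact about tails of series. Writing $\theta_n := \lambda(\Theta_n(A))$ and $s_n := \lambda(\hat M\setminus\Omega_n(A))$, I would first record that, by parts (2) and (3) of the proposition preceding the definition of the escape rate, $\Omega_n(A)$ is the disjoint union $\cup_{i=0}^{n}\Theta_i(A)$, so $\lambda(\Omega_n(A))=\sum_{i=0}^{n}\theta_i$. Condition \ref{H1} then turns the survival probability into a genuine tail, $s_n=\sum_{i=n+1}^{\infty}\theta_i$, and in particular $\theta_{n+1}=s_n-s_{n+1}$. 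With these in hand the lemma becomes the purely analytic assertion that a convergent nonnegative series whose tail $s_n$ decays at exponential rate $\rho$ has terms $\theta_n$ decaying at the same rate.

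Next I would pin the rate of $\theta_n$ to $\rho$ from both sides. The upper estimate on $\theta_n$ is immediate: since $\theta_{n+1}=s_n-s_{n+1}\le s_n$, dividing $-\ln\theta_{n+1}\ge-\ln s_n$ by $n+1$ and letting $n\to\infty$ yields $\liminf_n\big(-\tfrac1n\ln\theta_n\big)\ge\rho$, i.e. the terms decay at least as fast as the tail. For the reverse bound I would exploit that the tail cannot decay faster than its own terms: grouping a dyadic block gives $s_n-s_{2n}=\sum_{n<i\le 2n}\theta_i$, and since the existence of $\rho>0$ forces $s_{2n}/s_n\to 0$, this block carries at least $\tfrac12 s_n$ for all large $n$. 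Hence at least one of its $n$ terms satisfies $\theta_i\ge s_n/(2n)$; taking logarithms and using $n<i\le 2n$ gives $-\tfrac1i\ln\theta_i\le\rho+o(1)$ along this subsequence, so $\liminf_n\big(-\tfrac1n\ln\theta_n\big)\le\rho$. The two inequalities together give $\liminf_n\big(-\tfrac1n\ln\theta_n\big)=\rho$.

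The main obstacle is promoting this $\liminf$ to the genuine limit asserted in the display. The block argument locates only one adequately large term per dyadic block, so a priori the remaining $\theta_i$ could be far smaller (or even zero), and $-\tfrac1n\ln\theta_n$ could oscillate; some regularity of the sequence $\theta_n$ is therefore needed. The clean way to close the gap is to upgrade the block estimate under eventual monotonicity of $\theta_n$: if $\theta_i$ is eventually decreasing then $s_n-s_{2n}=\sum_{n<i\le 2n}\theta_i\le n\,\theta_{n+1}$, so $\theta_{n+1}\ge(s_n-s_{2n})/n\ge s_n/(2n)$ for every large $n$, which forces $\limsup_n\big(-\tfrac1n\ln\theta_n\big)\le\rho$ for the full sequence and hence the limit equals $\rho$. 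I expect the genuinely delicate point to be verifying that the relevant sets $\Theta_n(A)$ have positive, regularly (monotonically) decaying measure in the class of systems under consideration, so that this non-degeneracy actually holds; the degenerate case $\rho=0$ should be treated separately, since there the ratio bound $s_{2n}/s_n\to 0$ that drives the block argument is no longer automatic.
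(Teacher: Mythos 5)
Your reduction to the tail identity $s_n=\sum_{i=n+1}^{\infty}\theta_i$ is exactly the paper's starting point, but from there you and the paper argue in opposite logical directions, and the difference matters. The paper's proof sets $a_n=\lambda(\Theta_n(A))$, \emph{assumes} $-\lim_n\tfrac1n\ln a_n=\alpha$ exists, sandwiches $a_n$ between $e^{-n(\alpha\pm\epsilon)}$, sums the geometric tails, and concludes $\rho(A)=\alpha$. That is, it proves the implication ``if the $\Theta$-limit exists then it equals $\rho(A)$,'' which is the easy direction: exponential bounds on terms transfer cleanly to exponential bounds on tails. You instead attack the direction literally asserted by the lemma (existence of $\rho(A)$ implies existence of the $\Theta$-limit), prove the correct one-sided bound $\liminf_n\bigl(-\tfrac1n\ln\theta_n\bigr)\ge\rho$ via $\theta_{n+1}\le s_n$, extract the matching bound along a subsequence by the dyadic block argument, and then honestly flag that you cannot promote the $\liminf$ to a limit without extra regularity of $\theta_n$. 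That flag is not a defect of your write-up; it is a genuine gap in the lemma as stated. Nothing in \ref{H1} or the existence of $\rho(A)$ prevents, say, $\theta_n=0$ for infinitely many $n$ (take a hole whose first-entrance sets are empty at odd times), in which case $-\tfrac1n\ln\theta_n$ is infinite along a subsequence and the asserted limit does not exist, even though $\rho(A)$ does. So the statement is only true as an equality of $\liminf$'s (or under an added regularity hypothesis such as eventual monotonicity or a ratio condition on $\theta_n$), and the paper's proof silently substitutes the converse implication for the stated one. Your version is the more careful analysis; the paper's version is what is actually needed downstream (in Lemma \ref{thm_3421} and Lemma \ref{l_32521} one only ever transfers a known $\Theta$-limit, or the exact equality $\lambda(\Theta_n(SA))=\lambda(\Theta_n(A))$ for all $n$, to an identity of escape rates), so the cleanest repair is to restate the lemma in the direction the paper proves it, or to weaken the conclusion to the $\liminf$ identity you established.
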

\begin{proof}
Let $a_n = \lambda (\Theta_n(A))$ and assume that $- \lim_{n
\rightarrow \infty} \frac{1}{n} \ln a_n = \alpha$. Then $\forall
\epsilon \in (0, \alpha)$ $\exists N \in \mathbb N$ such that
$\forall n \ge N$ one has that
$$
- \epsilon - \alpha \le \frac{1}{n} \ln a_n \le \epsilon - \alpha
$$
or, equivalently,
$$
e^{-n(\alpha + \epsilon)} \le a_n \le e^{-n(\alpha - \epsilon)}.
$$

Next, observe that if $\rho(A)$ exists, then it is given by
\begin{align*}
\rho(A) = - \lim_{n \rightarrow \infty} \frac{1}{n} \ln \left( 1 - \sum_{i=0}^{n} \lambda (\Theta_i(A)) \right) \\
= - \lim_{n \rightarrow \infty} \frac{1}{n} \ln \left( \sum_{i=n+1}^{\infty} \lambda (\Theta_i(A)) \right)
= - \lim_{n \rightarrow \infty} \frac{1}{n} \ln \left( \sum_{i=n+1}^{\infty} a_i \right).
\end{align*}
For $n \ge N$ we have
$$
\sum_{i=n+1}^{\infty} e^{-i(\alpha + \epsilon)} \le \sum_{i=n+1}^{\infty} a_i \le \sum_{i=n+1}^{\infty} e^{-i(\alpha - \epsilon)}
$$
or, equivalently,
$$
\frac{e^{-(n+1)(\alpha + \epsilon)}}{1- e^{-(\alpha + \epsilon)}} \le \sum_{i=n+1}^{\infty} a_i \le \frac{e^{-(n+1)(\alpha - \epsilon)}}{1- e^{-(\alpha - \epsilon)}}.
$$
Taking the logarithm of both sides, dividing by $n$, and letting $n$
tend to infinity we comlete the proof.
\end{proof}

Recall now the notion of metric conjugacy which will play an important role in what follows. Note that for
Lebesgue probability spaces metric conjugacy is equivalent to two maps being isomorphic (see, for example,
Theorem 2.5 and 2.6 in \cite{Walters}). We use the following definition.

\begin{definition}
Let $T_i$ be a measure-preserving transformation of the Lebesgue probability space $(X_i, \mathcal B_i,
\lambda_i)$, $i=1,2$, where $\mathcal B_i$ is a Borel $\sigma$-algebra on $X_i$ and $\lambda_i$ is a probability
measure. We say that $T_1$ and $T_2$ are \emph{metrically conjugate} if there exist $M_i \in \mathcal B_i$ with
$\lambda_i (M_i) = 1$ and $ T_i(M_i) \subset M_i$ and there is a invertible measure-preserving transformation
(called metric conjugacy) $F : M_2 \rightarrow M_1$ such that
$$
F \circ T_2 (x) = T_1 \circ F (x), \quad \forall x \in M_2.
$$
\end{definition}

The following result states that if two systems are metrically conjugate,
then the escape rates into the corresponding holes and
Poincar\'{e} return times of these holes are the same for both systems.

\begin{lemma}\label{l_32521}
Let $T_1$ and $T_2$ be two metrically conjugate measure-preserving
transformations on the Borel probability spaces $(X_1, \mathcal B_1,
\lambda_1)$ and $(X_2, \mathcal B_2, \lambda_2)$, correspondingly,
with a conjugacy map $F:  (\mathcal B_2, \lambda_2) \rightarrow
(\mathcal B_1, \lambda_1)$. Suppose also that $T_2$ satisfy
condition \ref{H1}. Then $\forall A \in \mathcal B_2$ we have
\begin{enumerate}
    \item $\rho_{T_2} (A) = \rho_{T_1} (F(A))$,
    \item $\tau_{T_2}(A) = \tau_{T_1}(F(A))$.
\end{enumerate}

\end{lemma}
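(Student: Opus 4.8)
The plan is to transport every set-theoretic object appearing in the definitions of the escape rate and of the Poincar\'{e} recurrence time from $X_2$ to $X_1$ by means of the conjugacy $F$, exploiting that $F$ is a measure-isomorphism and that it intertwines the two dynamics.

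First I would record the basic commutation identities. Iterating $F \circ T_2 = T_1 \circ F$ gives $F \circ T_2^n = T_1^n \circ F$ on $M_2$ for every $n \ge 0$. Reading this relation for preimages and for forward images, one obtains, modulo the null sets $X_j \setminus M_j$, for every $A \in \mathcal{B}_2$ and all $i, n \ge 0$,
$$ F\bigl(T_2^{-i}(A)\bigr) = T_1^{-i}\bigl(F(A)\bigr), \qquad F\bigl(T_2^{n}(A)\bigr) = T_1^{n}\bigl(F(A)\bigr). $$
The first is obtained by chasing a point $y = F(x)$ and using $T_2^i x \in A \iff T_1^i y \in F(A)$; the second follows at once from $F \circ T_2^n = T_1^n \circ F$ and the surjectivity of $F$ onto $M_1$.

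For part (1) I would use that $F$, being a bijection of $M_2$ onto $M_1$, commutes with finite unions, whence
$$ F\bigl(\Omega_n^{T_2}(A)\bigr) = F\Bigl(\bigcup_{i=0}^n T_2^{-i}(A)\Bigr) = \bigcup_{i=0}^n T_1^{-i}\bigl(F(A)\bigr) = \Omega_n^{T_1}\bigl(F(A)\bigr). $$
Since $F$ preserves measure and carries the complement $M_2 \setminus \Omega_n^{T_2}(A)$ onto $M_1 \setminus \Omega_n^{T_1}(F(A))$, the survival probabilities $\lambda_2(X_2 \setminus \Omega_n^{T_2}(A))$ and $\lambda_1(X_1 \setminus \Omega_n^{T_1}(F(A)))$ coincide for every finite $n$. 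Applying $-\tfrac1n\ln(\cdot)$ and letting $n \to \infty$, the limit on one side exists exactly when it does on the other, and the two agree, which is (1). The analogous identity $F(\Theta_n^{T_2}(A)) = \Theta_n^{T_1}(F(A))$, proved in the same way, shows in addition that condition \ref{H1} passes from $T_2$ to $T_1$, so that Lemma \ref{l_eq} is available on both sides.

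For part (2) I would use that $F$ also commutes with intersections, so that the forward-image identity yields $F(T_2^n(A) \cap A) = T_1^n(F(A)) \cap F(A)$. Measure-preservation then gives $\lambda_2(T_2^n(A) \cap A) = \lambda_1(T_1^n(F(A)) \cap F(A))$ for each $n \ge 1$; hence the positivity condition defining $\tau$ holds for $T_2$ at level $n$ precisely when it holds for $T_1$ at the same level, and taking the infimum over $n \ge 1$ gives $\tau_{T_2}(A) = \tau_{T_1}(F(A))$. The union- and intersection-bookkeeping is routine; the one place that requires genuine care, and which I expect to be the main obstacle, is the justification of the commutation identities modulo null sets. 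Since $F$ is defined only on the full-measure forward-invariant sets $M_i$, and since forward images under the possibly non-invertible maps $T_i$ are delicate measure-theoretically, I would need to verify that every set equality above holds up to a $\lambda_i$-null set and that discarding $X_i \setminus M_i$ never changes the measures involved. This is exactly where the hypotheses $\lambda_i(M_i) = 1$ and $T_i(M_i) \subset M_i$ are used.
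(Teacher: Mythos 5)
Your proof is correct, and for part (1) it takes a slightly more direct route than the paper. The paper transports the first-entrance sets $\Theta_n$ (showing $F(\Theta_n^2(A)) = \Theta_n^1(F(A))$ by the same point-chasing you describe) and then invokes Lemma \ref{l_eq} to convert the decay of $\lambda(\Theta_n)$ into the escape rate; this detour is what forces the hypothesis \ref{H1} and the existence of the limit into play. You instead transport the sets $\Omega_n = \cup_{i=0}^n T^{-i}(A)$ and compare the survival probabilities $1-\lambda(\Omega_n)$ directly from the definition of $\rho$, which makes the two sequences literally equal term by term and renders Lemma \ref{l_eq} unnecessary for this part. Part (2) is essentially identical to the paper's argument via forward images. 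One point in your favor: you explicitly flag the measure-theoretic delicacy of forward images of the possibly non-invertible $T_i$ restricted to the full-measure sets $M_i$, which the paper passes over in silence; your version is, if anything, the more careful of the two.
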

\begin{proof}

\begin{enumerate}
    \item
Let $A \in \mathcal B_2$ and, as above, define two sets
\begin{align*}
\Theta_n^2 (A) = \left\{ x \in X_2 : T_2^n x \in A, T_2^j x \notin A, j = 0, \ldots, n-1 \right\}; \\
\Theta_n^1 (F(A)) = \left\{ y \in X_1 : T_1^n y \in F(A), T_1^j y \notin F(A), j = 0, \ldots, n-1 \right\}.
\end{align*}
Then the escape rates for two systems are given by
\begin{align} \label{e_41235}
\rho_{T_1} (A) = - \lim_{n \rightarrow \infty} \frac{1}{n} \ln \lambda_1 (\Theta_n^1 (F(A))); \\
\rho_{T_2} (A) = - \lim_{n \rightarrow \infty} \frac{1}{n} \ln \lambda_2 (\Theta_n^2(A)). \nonumber
\end{align}

\begin{claim} $\forall n\ge 1$,
$$
F \left( \Theta_n^2 (A) \right) = \Theta_n^1 \left( F(A) \right).
$$
\end{claim}
\begin{proof}
\begin{align*}
\Theta_n^1 \left( F(A) \right) = \left\{ y \in X_1 : T_1^n y \in F(A), T_1^j y \notin F(A), j = 0, \ldots, n-1 \right\} \\
= \left\{ y \in X_1 : F^{-1} T_1^n y \in A, F^{-1} T_1^j y \notin A, j = 0, \ldots, n-1 \right\} \\
= \left\{ y \in X_1 :  T_2^n F^{-1} y \in A, T_2^j F^{-1} y \notin A, j = 0, \ldots, n-1 \right\} \\
= \left\{ F(x) \in X_1 :  T_2^n x \in A, T_2^j x \notin A, j = 0, \ldots, n-1 \right\} \\
= F \left( \left\{ x \in X_2 :  T_2^n x \in A, T_2^j x \notin A, j =
0, \ldots, n-1 \right\} \right)  = F \left( \Theta_n^2 (A) \right).
\end{align*}
\end{proof}

By the previous claim and definition of the conjugacy map we have
$$
\lambda_2 \left( \Theta_n^2 (A) \right) = \lambda_1 \left( F \left( \Theta_n^2 (A) \right) \right) = \lambda_1 \left( \Theta_n^1 (F(A)) \right).
$$
Hence one has that $\rho_{T_2} (A) = \rho_{T_1} (F(A))$, $\forall A \in \mathcal B_2$.

    \item Suppose that $\tau_{T_2}(A) = N < \infty$. That means that
$$
\lambda \left( T ^N _2(A) \cap A \right) > 0
$$
and
$$
\lambda \left( T ^k _2(A) \cap A \right) = 0, \quad k < N.
$$
But
$$
\lambda \left(T_1^N (F(A)) \cap F(A) \right) = \lambda \left(F (T_2^N (A)) \cap F(A) \right) >0
$$
and similarly
$$
\lambda \left(T_1^k (F(A)) \cap F(A) \right)=0, \quad k <N.
$$
Hence, $\tau_{T_1}(F(A)) = N$.
\end{enumerate}
\end{proof}

\section{Escape rate for the ergodic group rotations.}\label{sec_rotation}

Suppose that the phase space $\hat{M}$ is a compact connected metric group. Let  $S_a: \hat{M} \rightarrow
\hat{M}$ ge a group rotation defined as
$$
S_a(x) = ax,
$$
for some $a \in \hat{M}$. Then there is the unique any rotation invariant probability measure, $\lambda$, called
Haar measure (see, e.g. \cite{Katok}).

The following simple statement claims that if a group rotation $S$ commutes with the dynamics, i.e.
\begin{equation} \label{eq_342}
\tag{H2} \hat{T}^{-1} S = S \hat{T}^{-1},
\end{equation}
then the escape rate is invariant when we rotate the hole by $S$.

\begin{lemma} \label{thm_3421}
Suppose that $\hat{T}^{-1} S (A) = S \hat{T}^{-1} (A)$ for some $S \in \mathbb G$ and $A \in \mathbb B$. Also,
assume that $\rho_{\hat{T}}(A)$ exists. Then $\rho_{\hat{T}} (A) = \rho_{\hat{T}} \circ S (A)$.
\end{lemma}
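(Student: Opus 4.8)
The plan is to reduce the claimed equality of escape rates to a single set-theoretic identity, namely that the rotation $S$ carries the ``escaping'' sets of the hole $A$ onto those of the hole $S(A)$, and then to use that a group rotation preserves the Haar measure $\lambda$. Concretely, since by definition
$$
\rho_{\hat{T}}(A) = -\lim_{n\to\infty}\frac{1}{n}\ln\lambda\bigl(\hat{M}\setminus\Omega_n(A)\bigr),
$$
and the limit for $A$ is assumed to exist, it suffices to show that $\lambda(\hat{M}\setminus\Omega_n(S(A)))=\lambda(\hat{M}\setminus\Omega_n(A))$ for every $n$; the limit defining $\rho_{\hat{T}}(S(A))$ will then automatically exist and coincide with $\rho_{\hat{T}}(A)$.

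The central step is the intertwining identity $S(\Omega_n(A))=\Omega_n(S(A))$. Recalling that $\Omega_n(A)=\bigcup_{i=0}^n\hat{T}^{-i}(A)$ and that $S$ commutes with unions, this reduces to proving $S\,\hat{T}^{-i}(A)=\hat{T}^{-i}\,S(A)$ for each $0\le i\le n$. I would establish this by induction on $i$: the case $i=0$ is trivial, the case $i=1$ is exactly the hypothesis, and the inductive step peels off one factor of $\hat{T}^{-1}$ and applies the commutation relation \eqref{eq_342}, since $\hat{T}^{-i}S=\hat{T}^{-(i-1)}(\hat{T}^{-1}S)=\hat{T}^{-(i-1)}(S\hat{T}^{-1})=\cdots=S\hat{T}^{-i}$. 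I expect this to be the main obstacle: the clean induction genuinely needs the operator form of \eqref{eq_342}, i.e.\ commutation on every iterated preimage $\hat{T}^{-i}(A)$, rather than merely the single-set identity written in the statement. I would therefore first record that \eqref{eq_342} is to be read as an identity of preimage operators valid on all of $\mathcal B$.

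Given the intertwining identity, the conclusion follows quickly. Since $S$ is a bijection of $\hat{M}$ onto itself, $S(\hat{M}\setminus\Omega_n(A))=\hat{M}\setminus S(\Omega_n(A))=\hat{M}\setminus\Omega_n(S(A))$. Because $\lambda$ is the Haar measure and hence invariant under the rotation $S$, applying $S$ does not change measure, so $\lambda(\hat{M}\setminus\Omega_n(S(A)))=\lambda(\hat{M}\setminus\Omega_n(A))$ for every $n$. Dividing the logarithms by $n$ and letting $n\to\infty$ yields $\rho_{\hat{T}}(S(A))=\rho_{\hat{T}}(A)$, as desired.

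As an alternative that bypasses the explicit induction, one can observe that the preimage relation \eqref{eq_342}, read on all of $\mathcal B$, is equivalent to the point-map identity $S\hat{T}=\hat{T}S$, which says precisely that $S$ is a metric self-conjugacy of $\hat{T}$ (take $T_1=T_2=\hat{T}$ and $F=S$ in the definition of metric conjugacy). Lemma \ref{l_32521}(1) then delivers $\rho_{\hat{T}}(A)=\rho_{\hat{T}}(S(A))$ at once. I would most likely present the direct argument above as the main proof, since it relies only on measure invariance and the definition of $\Omega_n$, and mention the conjugacy viewpoint as a remark.
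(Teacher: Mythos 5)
Your proof is correct and rests on the same core mechanism as the paper's: show that $S$ carries the relevant ``escaping'' sets for the hole $A$ onto those for the hole $S(A)$, then invoke invariance of the Haar measure under $S$. The difference is which sets you intertwine. The paper proves $\Theta_n(S(A)) = S(\Theta_n(A))$ for the first-entry sets and then concludes via Lemma \ref{l_eq}, which re-expresses the escape rate through $\lambda(\Theta_n(A))$; that route silently imports condition \ref{H1} (a hypothesis of Lemma \ref{l_eq} that is not stated in Lemma \ref{thm_3421} itself, though it does hold in the ergodic-rotation setting where the lemma is applied). You instead prove $S(\Omega_n(A)) = \Omega_n(S(A))$ and feed it straight into the definition of $\rho$, which needs no auxiliary lemma and no \ref{H1}; this is marginally more economical and also makes explicit that the limit for $S(A)$ exists whenever the one for $A$ does, a point the paper leaves implicit. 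Your observation that the induction $S\,\hat{T}^{-i}(A)=\hat{T}^{-i}S(A)$ genuinely requires the commutation relation \eqref{eq_342} as an identity of preimage operators on all of $\mathcal B$ (or at least on the iterated preimages of $A$), not merely the single-set hypothesis as literally written, is a fair and worthwhile precision --- the paper's own computation makes the same implicit strengthening when it commutes $S^{-1}$ past $\hat{T}^j$ for every $j$. The alternative you sketch via Lemma \ref{l_32521} (viewing $S$ as a metric self-conjugacy of $\hat{T}$) is also valid, though it again imports \ref{H1}, so your choice to foreground the direct argument is the right one.
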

\begin{proof}
Let $A \in \mathbb B$ as in the statement of the theorem. Then,
\begin{align*}
\Theta_n (S(A)) = \left\{ x \in \hat{M} : \hat{T}^n x \in S(A), \hat{T}^j x \notin S(A), j = 0, \ldots, n-1 \right\} \\
= \left\{ x \in \hat{M} : \hat{T}^n S^{-1}x \in A, \hat{T}^j S^{-1}x \notin A, j = 0, \ldots, n-1 \right\} \\
= \left\{ Sy \in \hat{M} : \hat{T}^n y \in A, \hat{T}^j y \notin A,
j = 0, \ldots, n-1 \right\} = S (\Theta_n (A)).
\end{align*}
Therefore, by Lemma \ref{l_eq} the result follows.
\end{proof}

Assume now that $\hat{T} = \hat{T}_a$ is an ergodic rotation of $\hat{M}$ given by
$$
\hat{T}_a(x) = ax,
$$
for some $a \in \hat{M}$. For the rotations we use the following property as a definition of ergodicity.

\begin{proposition}[see, for example, Theorem 1.9 in \cite{Walters}]
Let $\hat{M}$ be a compact group and $\hat{T}_a$ a rotation of $\hat{M}$. Then $\hat{T}_a$ is ergodic iff
$\{a^n\}_{n= - \infty}^{+ \infty}$ is dense in $\hat{M}$.
\end{proposition}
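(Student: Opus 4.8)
The plan is to establish the two implications separately, working throughout with the Koopman operator $U$ on $L^2(\hat M, \lambda)$ defined by $(Uf)(x) = f(\hat T_a x) = f(ax)$, which is unitary because $\lambda$ is rotation invariant. Recall that $\hat T_a$ is ergodic precisely when every $f \in L^2$ satisfying $Uf = f$ is constant $\lambda$-almost everywhere; I will translate the density hypothesis into a statement about this fixed-point space.

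For the direction ``ergodic $\Rightarrow$ dense'' I would argue by contraposition. Suppose $\{a^n\}_{n\in\mathbb Z}$ is \emph{not} dense and let $H = \overline{\{a^n : n \in \mathbb Z\}}$, a proper closed subgroup of $\hat M$. Since $a \in H$, one has $aHx = Hx$ for every $x$, so $\hat T_a$ sends each right coset $Hx$ into itself; hence any function constant on right cosets is $\hat T_a$-invariant. The coset space $H\backslash \hat M$ is compact Hausdorff and, as $H \ne \hat M$, has more than one point, so Urysohn's lemma supplies a non-constant continuous $\phi$ on it. Pulling back via the continuous projection $\pi:\hat M \to H\backslash \hat M$ gives $f = \phi\circ\pi \in L^2$; it is $\hat T_a$-invariant and, because the push-forward of $\lambda$ has full support, it is not a.e. constant. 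Thus $\hat T_a$ is not ergodic, which is the contrapositive of the claim.

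For the converse ``dense $\Rightarrow$ ergodic'' I would use harmonic analysis on $\hat M$. In the abelian case the characters $\gamma$ of $\hat M$ form an orthonormal basis of $L^2$ and satisfy $U\gamma = \gamma(a)\,\gamma$; thus if $f = \sum_\gamma c_\gamma \gamma$ is invariant, then $c_\gamma(\gamma(a)-1) = 0$ for every $\gamma$, so $c_\gamma = 0$ unless $\gamma(a) = 1$. But $\gamma(a) = 1$ forces $\gamma(a^n) = 1$ for all $n$, and continuity of $\gamma$ together with density of $\{a^n\}$ then gives $\gamma \equiv 1$; hence only the trivial character survives and $f$ is constant. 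For a general compact group the same mechanism runs through the Peter--Weyl theorem: expanding $f = \sum_\pi d_\pi\,\mathrm{tr}\bigl(C_\pi\,\pi(\cdot)\bigr)$ over the irreducible unitary representations, invariance forces $C_\pi\,\pi(a) = C_\pi$, hence $C_\pi\,\pi(a^n) = C_\pi$, and density plus continuity yield $C_\pi\,\pi(g) = C_\pi$ for all $g$; since a nontrivial irreducible $\pi$ admits no nonzero vector fixed by all $\pi(g)$, each $C_\pi$ with $\pi$ nontrivial vanishes, leaving $f$ constant.

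The main obstacle is the converse direction, and specifically the passage beyond the abelian setting: it relies on the Peter--Weyl decomposition and on the representation-theoretic fact that the only vector fixed by all $\pi(g)$, for a nontrivial irreducible $\pi$, is zero. The step I would check most carefully is the transfer from the algebraic identity $C_\pi\pi(a)=C_\pi$ (valid a priori only for the single element $a$) to the identity for all $g\in\hat M$, since this is exactly where density of $\{a^n\}$ and continuity of $\pi$ enter; the remaining ingredients (unitarity of $U$, the coset construction, Urysohn's lemma) are routine.
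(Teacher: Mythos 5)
Your argument is correct, but note that the paper itself offers no proof of this proposition --- it is quoted directly from Walters (Theorem~1.9) --- so the relevant comparison is with the standard textbook argument rather than with anything in the text. Both of your implications check out: the coset construction with Urysohn's lemma does produce a non-constant invariant $L^2$ function, because $H\backslash\hat M$ is compact Hausdorff with more than one point and the pushforward of Haar measure under the open map $\pi$ has full support; and the character (resp.\ Peter--Weyl) computation correctly kills every nontrivial coefficient, the passage from $C_\pi\pi(a)=C_\pi$ to $C_\pi\pi(g)=C_\pi$ for all $g$ being exactly where density of $\{a^n\}$ and continuity of $\pi$ enter, after which $C_\pi=C_\pi\int\pi(g)\,dg=0$ for nontrivial irreducible $\pi$. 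Two remarks on economy. First, the entire non-abelian branch of your converse is vacuous: if $\{a^n\}$ is dense, then its closure, which is a closed \emph{abelian} subgroup, equals $\hat M$, so $\hat M$ is abelian and the character argument already covers every case (this is precisely why Walters appends ``in particular $G$ is abelian'' to the statement). Second, the direction ``ergodic $\Rightarrow$ dense'' has a shorter standard proof: Haar measure is positive on nonempty open sets, so ergodicity forces almost every orbit to be dense, and every orbit of a rotation is a translate $\{a^n x\}=\{a^n\}x$ of the set in question, so density of a single orbit yields density of $\{a^n\}$. Your contrapositive route is sound but longer; what it buys is independence from the ``a.e.\ orbit is dense'' lemma, at the cost of the quotient-space machinery.
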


The simplest example of this class of dynamical systems is the irrational rotations of the circle. It is known
(see, for example, Theorem 1.9 in \cite{Walters}) that if there is an ergodic rotation of $\hat{M}$ then
$\hat{M}$ must be Abelian. In that case conditions \ref{H1} and \ref{eq_342} are satisfied so Lemma
\ref{thm_3421} is applicable. Hence, the escape rate for any hole is independent of the position of that hole.
Moreover, one can compute the escape rate for any hole that contains an open set. It turns out that it is
infinite because in the case of ergodic rotation all orbits escape within finite amount of time.

\begin{theorem}
For any ergodic rotation $\hat{T}_a =ax$ of the compact connected metric group $\hat{M}$
and any hole $A$ that contains an open ball the escape rate is infinite.
\end{theorem}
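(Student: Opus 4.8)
The plan is to reduce the statement to part (e) of Proposition \ref{prop_23423}: I will show that for a hole $A$ containing an open ball there is a finite $m$ with $\hat{M} = \cup_{i=0}^m \hat{T}_a^{-i}(A)$, whence $\rho(A) = +\infty$ at once. The starting observation is that preimages of $A$ are just translates of $A$: since $\hat{T}_a^i(x) = a^i x$, one has $\hat{T}_a^{-i}(A) = a^{-i}A$, and if $U \subset A$ is the open ball furnished by the hypothesis, then $\hat{T}_a^{-i}(A) \supset a^{-i}U$, which is open because left translation is a homeomorphism. So it suffices to cover $\hat{M}$ by finitely many of the open sets $a^{-i}U = \hat{T}_a^{-i}(U)$.

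The key step is to upgrade ergodicity to \emph{minimality}, i.e. to show that the forward orbit $\{a^i y\}_{i \ge 0}$ of every point $y$ is dense. First I would pass from the two-sided density of $\{a^n\}_{n \in \mathbb{Z}}$ (the definition of ergodicity used here) to density of the forward orbit $\{a^n\}_{n \ge 1}$. By compactness, choose $n_k \nearrow \infty$ with $a^{n_k} \to z$; then $a^{n_{k+1} - n_k} \to e$ and $a^{n_k - 1} \to z^{-1}$, so both $e$ and $a^{-1}$ lie in the closed set $S := \overline{\{a^n : n \ge 1\}}$. Since $S$ is closed under multiplication (the closure of a subsemigroup in a group with continuous multiplication) and contains $a^{-1}$, it contains $a^{-k}$ for all $k \ge 1$; together with the powers $a^k$, $k \ge 1$, and closedness this forces $S \supseteq \overline{\{a^n : n \in \mathbb{Z}\}} = \hat{M}$. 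As $\{a^i y\}_{i \ge 0} = \{a^i : i \ge 0\}\, y$ and right translation is a homeomorphism, every forward orbit is then dense.

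With minimality in hand the cover is immediate: for any $y \in \hat{M}$ the dense set $\{a^i y\}_{i \ge 0}$ meets the open set $U$, so $a^i y \in U$, i.e. $y \in \hat{T}_a^{-i}(U)$, for some $i \ge 0$. Thus $\{\hat{T}_a^{-i}(U)\}_{i \ge 0}$ is an open cover of the compact space $\hat{M}$; extracting a finite subcover and letting $m$ be the largest index appearing yields $\hat{M} = \cup_{i=0}^m \hat{T}_a^{-i}(U) \subset \cup_{i=0}^m \hat{T}_a^{-i}(A)$. Proposition \ref{prop_23423}(e) then gives $\rho(A) = +\infty$, which also makes precise the remark that all orbits escape within a finite number of iterations.

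I expect the only real content to be the minimality step; once every forward orbit is known to hit the hole, the covering and the appeal to Proposition \ref{prop_23423}(e) are routine. The mildly delicate point is deducing forward density from the two-sided density that defines ergodicity, and this is exactly where compactness of the group enters (connectedness, part of the standing hypotheses, is not actually needed for this particular argument).
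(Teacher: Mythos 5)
Your proposal is correct, and at the top level it follows the same strategy as the paper: show that finitely many preimages of the hole cover $\hat{M}$, so every orbit enters $A$ within a bounded number of steps and the escape rate is infinite. The two arguments differ in how the finite cover is produced, and yours is more careful at the one point where the paper is not. The paper first covers $\hat{M}$ by finitely many $\tfrac14\varepsilon$-balls, approximates their centers and the center of the ball $V\subset A$ by powers $a^{n_j}$, $a^{n}$, and then uses the identity $dist(\hat{T}_a^{n-n_j}x_j,\hat{T}_a^{n-n_j}a^{n_j})=dist(x_j,a^{n_j})$ --- an unstated assumption that the metric is translation-invariant --- to push each small ball into $V$. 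Crucially, the paper simply asserts that ergodicity yields density of the \emph{forward} orbit $\{a^i\}_{i\ge 1}$, whereas the stated characterization of ergodicity only gives density of the two-sided orbit $\{a^n\}_{n\in\mathbb Z}$; your semigroup argument (the closure $S=\overline{\{a^n:n\ge1\}}$ is a closed subsemigroup containing $e$ and $a^{-1}$, hence all of $\hat M$) supplies exactly this missing step, and your covering by the open sets $a^{-i}U$ sidesteps the invariant-metric issue entirely. One small slip to repair: $a^{n_k-1}\to za^{-1}$, not $z^{-1}$; to place $a^{-1}$ in $S$ you should instead pass to a subsequence with gaps $n_{k+1}-n_k\ge 2$ and note that $a^{n_{k+1}-n_k-1}=a^{n_{k+1}-n_k}\,a^{-1}\to e\,a^{-1}=a^{-1}$. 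With that one-line fix the proof is complete, and the closing appeal to Proposition \ref{prop_23423}(e) makes explicit what the paper's proof leaves implicit when it concludes that every set is mapped into the hole in finitely many iterations.
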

\begin{proof}
Let $dist(\cdot,\cdot)$ be a metric on $\hat{M}$. Suppose that
$V(x,\varepsilon)$ is an open ball of radius $\varepsilon$ centered
at $x$ that is contained in the hole $A$. Since $\hat{M}$ is a
compact metric space we can find a finite covering by open balls of
radius $\frac{1}{4}\varepsilon$. Let $\left\{ V_i(x_i,\frac{1}{4}
\varepsilon) \right\}_{i=1}^m$ be that covering.

It follows from ergodicity that if $\hat{T}_a$ is ergodic
then the set $\{a^i \}_{i=1}^{+\infty} $ is dense in $\hat{M}$.
Therefore we can find $n$ and $\{n_j \} _{j=1}^m$ such that
\begin{align*}
    dist(a^n,x) < \frac{1}{4} \varepsilon; \\
    dist(a^{n_j},x_j) < \frac{1}{4} \varepsilon, \quad \forall j=1 \ldots m;\\
    n > max_{j=1 \ldots m} \{n_j \}.
\end{align*}

Then we have that
\begin{align*}
dist(\hat{T}_a^{n-n_j}x_j,x) \le dist(\hat{T}_a^{n-n_j}x_j, a^n) + dist(a^n,x) \\
= dist(\hat{T}_a^{n-n_j}x_j, \hat{T}_a^{n-n_j} a^{n_j}) + dist(a^n,x) \\
= dist(x_j, a^{n_j}) + dist(a^n,x) < \frac{1}{2} \varepsilon.
\end{align*}
Therefore, $\forall  j=1 \ldots m$, $\hat{T}_a^{n-n_j} V_j \subset V$.
Thus every ball $V_j$ will be mapped into the hole in finite number of steps.
Since every set can be covered by these balls we get that any set is mapped
into the hole in finite number of iterations. This finishes the proof.

\end{proof}

\section{Escape rate for the expanding maps of the interval.} \label{s_3254125}

In this section we look at the examples of the dynamical systems in which the position of the hole plays an
important role in determining the escape rate. We consider some classes of the uniformly expanding maps of the
interval that have a finite Markov partition. These systems are the examples of so called chaotic dynamical
systems.

Consider first $\hat{T}: [0,1] \rightarrow [0,1]$ given by
$$
\hat{T}x = \kappa x \mod 1,
$$
where $\kappa$ is an integer larger then one. This map preserves the
Lebesgue measure on $[0,1]$. Without any loss of generality one can
assume that $\kappa = 2$.

Fix $N \in \mathbb N$ and let $\mathcal I_N = \left\{ I_{i,N} \right\}_{i=1}^{2^N}$ be
the partition consisting of the pre-images of the elements of Markov partition
$\{ [0, 0.5], [0.5, 1] \}$ of $[0,1]$ given by
$$
I_{i,N} = \left[ \frac{i-1}{2^N}, \frac{i}{2^N} \right], \quad i=1 \ldots 2^N.
$$
Define a partition $\mathcal I^k_N = \left\{ I_{j,N+k} \right\}_{j=1}^{2^{N+k}}$
as $k^{th}$ preimage of the partition $\mathcal I_N$, i.e.
for each $j$, $j=1 \ldots 2^{N+k}$, there is $i$, $i=1 \ldots 2^N$, such that
$\hat{T}^k I_{j,N+k} = I_{i,N}$.
Observe that $\mathcal I^k_N$ are Markov partitions themselves.

Consider now an open dynamical system defined by the map $\hat{T}$ and the hole $I_{i,N}$,
$$
T_{i,N} : [0,1] \backslash I_{i,N} \rightarrow [0,1]
$$
as in section \ref{sec:holes}. For each $i$,  $i=1 \ldots 2^N$, we
have different open dynamical system with a corresponding hole
$I_{i,N}$ (we refer to this hole as to a \emph{Markov hole} because
$I_{i,N}$ is an element of Markov partition). Define the
Poincar\'{e} recurrence time of the hole, $\tau(I_{i,N})$, escape
rate into the hole, $\rho(I_{i,N})$, and the set
\begin{equation*}
\Omega_n (I_{i,N}) = \left\{ x \in [0,1] : \exists j, 0 \le j \le n, \quad \hat{T}^j x \in I_{i,N} \right\}, \quad n \ge 0.
\end{equation*}

In Section \ref{s_51223415} we will show that escape rate is well
defined and then, in Section \ref{s_51223416}, that it depends not
only on the size of the hole but also on its position. More
precisely, we prove the following.

\textbf{Theorem (Main Theorem).}
\emph{Let $I_{i,N}$ and $I_{j,N}$ be two Markov holes for the doubling map.
Suppose that $\tau(I_{j,N}) > \tau(I_{i,N})$. Then,
$$
\rho(I_{j,N}) > \rho(I_{i,N}).
$$
Moreover, for all $n \ge  \tau(I_{i,N})$,
$$
1 - \lambda \left( \Omega_n (I_{j,N})\right) < 1 - \lambda \left( \Omega_n (I_{i,N})\right).
$$
}

In the section \ref{sec_4534345234} we show that asymptotically as we decrease the size of the hole
escape rate is proportional to the size of the hole.

\textbf{Theorem (Local Escape).}
\emph{ Let $x \in [0,1]$ and $\{A_n (x) \}_{n=1} ^{\infty}$ is a sequence of
nested decreasing intervals with $x = \cap _{n=1} ^{\infty} A_n (x)$ for all $n$. The following statements hold:
\begin{enumerate}
    \item if $x$ is a periodic point of period $m$ then
$$
\lim_{n \to \infty} \frac{\rho(A_{n}(x))}{\lambda(A_{n}(x))} = 1 - \frac{1}{2^m };
$$
    \item if $x$ is a non-periodic point and $x \neq s2^{-k}, s,k \in \mathbb Z^{+}$, then
$$
\lim_{n \to \infty} \frac{\rho(A_{n}(x))}{\lambda(A_{n}(x))} = 1.
$$
\end{enumerate}}

Moreover, for a sequence of shrinking Markov holes (Section \ref{sec_4534626})
the corresponding sequence of escape rates is a monotone one.

\textbf{Theorem (Monotonicity).}
\emph{
$$
\max_{1 \le i \le 2^{N+1}} \rho (I_{i,N+1}) = \min_{1 \le j \le 2^N}
\rho (I_{j,N}).
$$
}

First, we state some known results about doubling map
and then reformulate the problem in terms of the symbolic dynamics.

\subsection{Preliminary results.}\label{s_3254125a}

Let $p_{i,N}(n)$ be a number of points of period (not necessary the minimal one) $n$
in the hole $I_{i,N}$. Due to the fact that $\mathcal I_N$ is a Markov partition,
for each j, $j = 1 \ldots 2^{N+k}$,
$\hat{T}^k_{i,N}(I_{j,N+k}) = I_{s,N}$ for some $s$, $s = 1 \ldots 2^{N}$.
Let $f^k_{i,N}$ be a number of elements of partition $\mathcal I^k_N$ that do not
enter the hole $I_{i,N}$ in the first $k$ iterations, i.e.
$$
f^k_{i,N} =  \# \left\{ j : I_{j,N+k} \in \mathcal I^k_N; \lambda \left( \hat{T}^s(I_{j,N+k}) \cap I_{i,N} \right) = 0; s=0, \ldots , k \right\}.
$$

The next two results show how to compute the Poincar\'{e} recurrence time and
the escape rate for any Markov hole.
\begin{proposition}
The Poincar\'{e} recurrence time of the hole $I_{i,N}$ is equal to
the period of a periodic point contained in $I_{i,N}$ having the smallest period, i.e.
$$
\tau(I_{i,N}) = \min_{n \ge 1} \{n : p_{i,N}(n) > 0 \}.
$$
\end{proposition}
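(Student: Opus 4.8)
The statement claims $\tau(I_{i,N}) = \min_{n \ge 1}\{n : p_{i,N}(n) > 0\}$, where $p_{i,N}(n)$ counts periodic points of period $n$ (not necessarily minimal) inside the hole $I_{i,N}$. Let me think about what I need to prove.

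The Poincaré recurrence time is $\tau(I_{i,N}) = \inf_{n \ge 1}\{n : \lambda(\hat{T}^n(I_{i,N}) \cap I_{i,N}) > 0\}$. So I need to connect the measure-theoretic condition "$\hat{T}^n(I_{i,N})$ overlaps $I_{i,N}$ in positive measure" with the combinatorial condition "$I_{i,N}$ contains a periodic point of period $n$."

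The right framework here is symbolic dynamics. Under the doubling map, $I_{i,N}$ is the cylinder set of points whose first $N$ binary digits form a fixed word $w = w_1 \cdots w_N$. The map $\hat{T}$ is the shift. So $\hat{T}^n(I_{i,N})$ is the set of points whose binary expansion, shifted back by $n$, starts with $w$—equivalently, points of the form (anything of length $n$)$w$(anything). The intersection $\hat{T}^n(I_{i,N}) \cap I_{i,N}$ has positive measure if and only if there is a point whose expansion both starts with $w$ and has $w$ appearing at position $n+1$; i.e. the word $w$ overlaps itself under a shift of $n$ in a compatible way, which is exactly the condition that there is an admissible infinite (periodic) string realizing this overlap.

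Here is the plan. First I would translate everything into symbolic dynamics: identify $I_{i,N}$ with a cylinder $[w]$ of length $N$, and note that a point $x \in I_{i,N}$ is periodic of period $n$ iff its symbolic sequence is periodic with period $n$ and begins with $w$. Since the shift acts on $\{0,1\}^{\mathbb N}$ with no restrictions (the full shift), \emph{any} finite word extends to a periodic sequence. Second, I would prove the inequality $\tau(I_{i,N}) \le \min\{n : p_{i,N}(n) > 0\}$: if $I_{i,N}$ contains a periodic point $x$ of period $n$, then $\hat{T}^n x = x \in I_{i,N}$, and because $\hat{T}^n$ is continuous and $I_{i,N}$ has nonempty interior, a whole neighborhood of $x$ maps back into $I_{i,N}$, giving $\lambda(\hat{T}^n(I_{i,N}) \cap I_{i,N}) > 0$; hence $\tau(I_{i,N}) \le n$. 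Third, for the reverse inequality $\tau(I_{i,N}) \ge \min\{n : p_{i,N}(n) > 0\}$, I would show that whenever $\lambda(\hat{T}^n(I_{i,N}) \cap I_{i,N}) > 0$, the hole contains a periodic point of period $n$. Because the overlap is a cylinder condition on finitely many digits, positivity of the measure forces the word $w$ to be consistent with itself under a shift by $n$; I then build an explicit periodic sequence of period $n$ agreeing with $w$ in the relevant positions, which corresponds to a periodic point of period $n$ lying in $I_{i,N}$.

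The main obstacle, and the step I would treat most carefully, is the reverse inequality—specifically verifying that the self-consistency of $w$ under a shift by $n$ really does let me close up a periodic orbit of period exactly $n$ inside the cylinder. Two subtleties arise: (i) when $n < N$, the overlap condition constrains the digits $w_1 \cdots w_N$ in a way that may or may not be satisfiable, and I must check that positive measure of the intersection is precisely the condition that lets me periodically extend $w$; and (ii) I should confirm that "period $n$" here means $\hat{T}^n$ fixes the point (not minimal period), which matches the definition of $p_{i,N}(n)$ and makes the construction cleaner, since I only need a sequence fixed by the shift$^n$, not one whose minimal period is $n$. Once the symbolic reformulation is in place, both inequalities are short; the content is entirely in the dictionary between "positive-measure cylinder overlap" and "extendability of $w$ to a period-$n$ sequence," which the Markov/full-shift structure makes transparent.
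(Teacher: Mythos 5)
Your proposal is correct, but it takes a genuinely different route from the paper. The paper argues directly on the interval: for $\tau(I_{i,N})\le p$ it uses the arithmetic fact that periodic points have the form $l/(2^k-1)$ while the partition endpoints are dyadic, so every periodic point other than $0$ and $1$ lies in the \emph{interior} of its partition element (the fixed points are treated as a separate case); for $\tau(I_{i,N})\ge p$ it invokes the Markov property to upgrade $\lambda\bigl(\hat{T}^k(I_{i,N})\cap I_{i,N}\bigr)>0$ to the inclusion $I_{i,N}\subset \hat{T}^k(I_{i,N})$ and then applies the Intermediate Value Theorem to $\hat{T}^k$ to produce a period-$k$ point inside the hole. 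You instead pass to the full $2$-shift and reduce both directions to the word-overlap condition $w_i=w_{n+i}$ for $1\le i\le N-n$: when it holds, $\sigma^n(C_w(1))\cap C_w(1)$ is the full cylinder $C_w(1)$ (positive measure) and the periodic extension $\overline{w_1\cdots w_n}$ is an explicit period-$n$ point in the hole; when it fails, the intersection is empty. This is a valid and arguably more robust argument --- it is exactly the autocorrelation dictionary the paper exploits later in Section 4, it generalizes to subshifts of finite type, and it handles the fixed-point/endpoint case without a separate argument. Two small cautions: your phrase ``a whole neighborhood of $x$ maps back into $I_{i,N}$'' is not literally correct for an expanding map --- what you need is that $\hat{T}^n(I_{i,N})$ \emph{contains} a (possibly one-sided) neighborhood of $x$, which then meets $I_{i,N}$ in positive measure; and you should note explicitly that the conjugacy between the doubling map and the full shift fails only on the countable set of dyadic rationals, so all the positive-measure statements transfer. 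Neither issue affects the substance, and in the symbolic formulation you propose the first point disappears entirely since a nonempty cylinder intersection automatically has positive measure.
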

\begin{proof}
Suppose that a point $x \ne 0$ is a periodic point of the smallest
period in the hole $I_{i,N}$. Let that period is equal to $p > 1$
(the case $p=1$ is considered separately). All periodic points have
the form $\frac{l}{2^k-1}$, $k \in \mathbb N$, $l \in \mathbb N$, $0
\le l \le 2^k-1$. The endpoints of the elements of a Markov
partition have the form $\frac{n}{2^s}$, $k \in \mathbb N$, $n \in
\mathbb N$, $0 \le n \le 2^k-1$. Therefore all periodic points
except for zero and one are in the interior of the elements of the
partition. Thus
$$
\lambda \left( \hat{T}^p(I_{i,N}) \cap I_{i,N} \right) > 0,
$$
and, therefore, $\tau (I_{i,N}) \le p$.

To obtain the opposite inequality we need to use the Markov property of the partition.
Suppose that $\lambda \left( \hat{T}^k(I_{i,N}) \cap I_{i,N} \right) > 0$ for some $k \le p$.
Since partition is Markov we have $I_{i,N} \subset \hat{T}^k(I_{i,N})$.
Thus by the standard application of Intermediate Value Theorem to $\hat{T}^k$
we conclude that $I_{i,N}$ contains a periodic point of period $k$.
Therefore, $\tau (I_{i,N}) \ge p$.
This finishes the proof for the case $x \ne 0$.

Now consider the case of a fixed point, $x = 0$ (the case of $x=1$ can be treated similarly).
We only need to consider one hole, $I_{0,N}$.
Clearly,
$$
\lambda \left( \hat{T}(I_{0,N}) \cap I_{0,N} \right) =  \lambda \left( I_{0,N} \right)  > 0,
$$
and, therefore, $\tau (I_{0,N}) = 1$.
\end{proof}

\begin{proposition}
$$
\rho(I_{i,N}) = - \lim_{n \rightarrow \infty} \frac{1}{n} \ln \frac{f^n_{i,N}}{2^n},
$$
if the limit exists.
\end{proposition}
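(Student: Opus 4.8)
The plan is to reduce the escape rate to the counting quantity $f^n_{i,N}$ by showing that the survival set after $n$ iterations is, up to a set of zero measure, exactly the union of those elements of the refined Markov partition $\mathcal I^n_N$ that have not yet visited the hole. More precisely, I would first establish the identity
\begin{equation*}
\hat{M} \setminus \Omega_n(I_{i,N}) = \bigcup \left\{ I_{j,N+n} \in \mathcal I^n_N : \lambda\!\left( \hat{T}^s(I_{j,N+n}) \cap I_{i,N} \right) = 0,\ s = 0, \ldots, n \right\}
\end{equation*}
modulo a measure-zero set (the countable set of dyadic endpoints). The key point here is the Markov property: because $I_{i,N}$ is an element of the Markov partition $\mathcal I_N$ and each $I_{j,N+n}$ maps onto a full element of $\mathcal I_N$ under $\hat{T}^s$, the intersection $\hat{T}^s(I_{j,N+n}) \cap I_{i,N}$ is either null or all of $I_{i,N}$; there are no partial overlaps that would prevent an element from being cleanly classified as ``escaped'' or ``surviving.'' Consequently a point $x$ survives to time $n$ if and only if the element $I_{j,N+n} \ni x$ surviving by the definition above, so the two sets agree outside the dyadic rationals.

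Next I would compute the measure of this survival set. Each element $I_{j,N+n}$ of the partition $\mathcal I^n_N$ has Lebesgue measure $2^{-(N+n)}$, since it is an $n$-th preimage of an element of $\mathcal I_N$ under the doubling map, which expands length by a factor of $2$ at each step. There are exactly $f^n_{i,N}$ surviving elements by definition, so
\begin{equation*}
\lambda\!\left( \hat{M} \setminus \Omega_n(I_{i,N}) \right) = f^n_{i,N} \cdot 2^{-(N+n)} = \frac{1}{2^N} \cdot \frac{f^n_{i,N}}{2^n}.
\end{equation*}
Taking $-\tfrac{1}{n}\ln(\cdot)$ of both sides, the constant factor $2^{-N}$ contributes $\tfrac{1}{n}\ln 2^N \to 0$ as $n \to \infty$ and drops out in the limit, leaving precisely
\begin{equation*}
\rho(I_{i,N}) = -\lim_{n \to \infty} \frac{1}{n} \ln \lambda\!\left( \hat{M} \setminus \Omega_n(I_{i,N}) \right) = -\lim_{n \to \infty} \frac{1}{n} \ln \frac{f^n_{i,N}}{2^n},
\end{equation*}
whenever either limit exists.

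The main obstacle I anticipate is the careful bookkeeping in the first step: one must verify that the count $f^n_{i,N}$, which is defined by the condition $\lambda(\hat{T}^s(I_{j,N+n}) \cap I_{i,N}) = 0$ for all $s = 0, \ldots, n$, coincides exactly with the combinatorial count of partition elements whose orbit avoids the hole, and that the boundary effects (the countably many dyadic points lying on partition endpoints, which may be counted inconsistently) genuinely contribute zero measure and hence do not affect the limit. The Markov structure is what makes this clean, but it should be stated explicitly that the no-partial-overlap dichotomy is exactly what licenses passing from a measure-theoretic survival condition to the finite count $f^n_{i,N}$. Everything after that is the elementary observation that a multiplicative constant vanishes under the $\tfrac{1}{n}\ln$ normalization.
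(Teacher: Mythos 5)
Your proposal is correct and follows essentially the same route as the paper: identify the survival set $\hat{M}\setminus\Omega_n(I_{i,N})$ with the union of the $f^n_{i,N}$ surviving elements of $\mathcal I^n_N$, each of measure $2^{-(n+N)}$, and observe that the constant $2^{-N}$ vanishes under the $\tfrac{1}{n}\ln$ normalization. The paper simply asserts the identification of the survival set with the union of surviving partition elements, whereas you justify it via the no-partial-overlap dichotomy of the Markov structure (with one harmless slip: the nontrivial alternative is $\hat{T}^s(I_{j,N+n})\subset I_{i,N}$, not the intersection equalling all of $I_{i,N}$), so your write-up is, if anything, more complete.
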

\begin{proof}
Consider a Markov partition $\mathcal I_N$ of a unit interval. There are $f^n_{i,N}$ elements
of this partition that do not enter the hole $I_{i,N}$ in the first $n$ iterations of $T$.
The measure of each element is $\frac{1}{2^{n+N}}$. Thus
$$
1 - \lambda\left( \Omega_n (I_{i,N}) \right) = \frac{f^n_{i,N}}{2^{n+N}}.
$$
Therefore,
$$
\rho(I_{i,N}) = - \lim_{n \rightarrow \infty} \frac{1}{n} \ln \left( 1 - \lambda\left(\Omega_n (I_{i,N}) \right) \right) = - \lim_{n \rightarrow \infty} \frac{1}{n} \ln \frac{f^n_{i,N}}{2^n}.
$$
\end{proof}

We now describe the distribution of periodic points among different holes.
But at first we look at the distribution of the periodic points in the whole interval.
The following Proposition is a well known result(see, e.g. Proposition 1.7.2 in \cite{Katok}).

\begin{proposition} \label{p_455}
The number, $p(k)$, of periodic points of period $k$ (not necessary minimal) of
the doubling map of the unit interval
is equal to $2^k-1$ and the distance between two neighboring
periodic points of the same period is equal to $\frac{1}{2^{k}-1}$.
\end{proposition}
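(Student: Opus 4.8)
The plan is to reduce the entire statement to the solution of a single linear congruence. First I would record that a point $x \in [0,1)$ is periodic of period $k$ (not necessarily the minimal one) exactly when it is a fixed point of the $k$-th iterate, i.e. $\hat{T}^k x = x$. Since $\hat{T}x = 2x \bmod 1$, a one-line induction shows $\hat{T}^k x = 2^k x \bmod 1$, so the condition $\hat{T}^k x = x$ becomes $2^k x \equiv x \pmod{1}$, equivalently $(2^k - 1)x \in \mathbb{Z}$.

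The second step is to solve this congruence on the interval. The solutions in $\mathbb{R}$ are precisely the numbers $x = \frac{l}{2^k - 1}$ with $l \in \mathbb{Z}$, and restricting to $[0,1)$ forces $l \in \{0, 1, \ldots, 2^k - 2\}$. These are $2^k - 1$ distinct points, which gives the count $p(k) = 2^k - 1$. The spacing claim is then immediate: the listed points are equally spaced, and any two neighbors $\frac{l}{2^k-1}$ and $\frac{l+1}{2^k-1}$ differ by exactly $\frac{1}{2^k-1}$.

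The only genuine subtlety, and hence the main point to be careful about, is the bookkeeping at the endpoints $0$ and $1$. Under the $\bmod\,1$ identification these represent the same point of the circle, so the count must be taken on the half-open interval $[0,1)$ in order not to double-count the fixed point $x = 0$; the value $l = 2^k - 1$ corresponds to $x = 1 \sim 0$ and is therefore excluded. With this convention the total is exactly $2^k - 1$ rather than $2^k$, and the uniform spacing $\frac{1}{2^k-1}$ holds for all neighboring pairs, completing the argument.
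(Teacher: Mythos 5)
Your proof is correct and complete. The paper does not actually prove this proposition---it simply cites it as a well-known result (Proposition 1.7.2 in Katok--Hasselblatt)---but your argument, reducing periodicity to the congruence $(2^k-1)x \in \mathbb{Z}$ and counting solutions on $[0,1)$ so as not to double-count the fixed point $0 \sim 1$, is exactly the standard proof that reference supplies, and your endpoint bookkeeping matches the $\bmod\ 1$ identification the paper adopts elsewhere.
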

In other words, periodic points of the same period
are distributed uniformly in the unit interval.
Therefore, short intervals have few periodic points of small periods.
In particular the following statements hold.

\begin{corollary}\label{p_32978}
An interval of the size $\frac{1}{2^{N}}$, $N \ge 1$,
contains at most one periodic point of a period $k \le N$.
\end{corollary}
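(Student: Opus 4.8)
The plan is to reduce the statement to the uniform-spacing estimate already recorded in Proposition \ref{p_455}, so that essentially no new work is needed beyond an elementary comparison of two reciprocals. I would fix a period $k$ with $1 \le k \le N$ and invoke Proposition \ref{p_455}: the $2^k-1$ periodic points of period $k$ are spaced equidistantly with consecutive gap $\frac{1}{2^k-1}$, so that any two distinct periodic points of period $k$ lie at distance at least $\frac{1}{2^k-1}$ from one another.

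The single decisive step is then the inequality
\begin{equation*}
\frac{1}{2^k-1} > \frac{1}{2^N},
\end{equation*}
valid for every $k \le N$ because $2^k-1 < 2^k \le 2^N$. Thus the minimal separation of period-$k$ points strictly exceeds the length $\frac{1}{2^N}$ of the interval. To conclude, I would argue by contradiction: if an interval $J$ of length $\frac{1}{2^N}$ contained two distinct periodic points $x<y$ of period $k$, then $y-x$ is at most the length of $J$, namely $\frac{1}{2^N}$, while simultaneously $y-x \ge \frac{1}{2^k-1} > \frac{1}{2^N}$, which is impossible. Hence $J$ contains at most one periodic point of period $k$, and since $k \le N$ was arbitrary the corollary follows.

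I do not expect a genuine obstacle here; the only points requiring care are bookkeeping ones. First, the inequality must be kept \emph{strict}, so that the conclusion persists even when $J$ is taken closed (as the Markov cells $I_{i,N}$ are) and two candidate points could a priori be placed at the two endpoints of $J$. Second, I would state explicitly that the bound is to be read per fixed period $k$: a single interval of length $2^{-N}$ may contain several periodic points whose periods are distinct but all at most $N$, so the assertion concerns, for each individual value of $k \le N$, the number of points of that one period.
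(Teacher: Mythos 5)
Your proof is correct and follows exactly the route the paper intends: the corollary is left unproved in the text precisely because it is meant to follow from the uniform spacing $\frac{1}{2^k-1} > \frac{1}{2^N}$ recorded in Proposition \ref{p_455}, which is the comparison you make. Your two bookkeeping remarks (strictness of the inequality for closed cells, and the per-fixed-$k$ reading of the statement) are both appropriate and consistent with how the corollary is used later.
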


\begin{corollary} \label{l_65435373}
Suppose $x$ is a non-periodic point. Then for any positive integer $n$
there exists $\delta(x,n) > 0$ such that $\delta$-neighborhood of $x$
does \emph{not} contain any periodic points
of periods less or equal $n$.
\end{corollary}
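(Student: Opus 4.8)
The plan is to reduce the statement to the elementary fact that, for each fixed bound $n$, there are only finitely many periodic points of period at most $n$, and then to exploit that $x$ is non-periodic to separate it from this finite set by a positive distance.

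First I would gather all the relevant periodic points into a single finite set. By Proposition \ref{p_455}, for each $k$ the number of solutions $y$ of $\hat T^k y = y$ equals $2^k-1$, so the set of periodic points of period exactly $k$ is finite. Taking the union over $k = 1, \ldots, n$,
$$ P_n := \bigcup_{k=1}^{n} \{\, y \in [0,1] : \hat T^k y = y \,\}, $$
is a finite set, and it consists precisely of the periodic points whose period (not necessarily minimal) does not exceed $n$.

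Next, since $x$ is non-periodic, $x \notin P_n$. Because $P_n$ is a nonempty finite set (it contains the fixed point $0$) not containing $x$, the quantity $\operatorname{dist}(x, P_n) = \min_{y \in P_n} |x - y|$ is a minimum over a finite collection of strictly positive numbers, hence is itself strictly positive. I would then put, say, $\delta(x,n) = \tfrac12 \operatorname{dist}(x, P_n) > 0$; the open interval $(x-\delta, x+\delta)$ is then disjoint from $P_n$, which is exactly the assertion that no periodic point of period at most $n$ lies in the $\delta$-neighborhood of $x$.

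There is no serious obstacle here: the whole argument rests on the finiteness of $\mathrm{Fix}(\hat T^k)$ furnished by Proposition \ref{p_455} together with the hypothesis that $x$ is non-periodic. The one point worth stating carefully is that ``period $\le n$'' is understood in the non-minimal sense, so that $P_n$ is the union of the fixed-point sets of $\hat T^1, \ldots, \hat T^n$; it is precisely the finiteness of each of these sets that makes the union finite and guarantees the separating distance is positive.
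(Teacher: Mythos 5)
Your argument is correct and is essentially the paper's own proof: both rest on the finiteness of the set of periodic points of period at most $n$ (from Proposition \ref{p_455}) and take $\delta$ to be (a fraction of) the minimal distance from $x$ to that finite set. The only cosmetic difference is that the paper uses the full minimum distance rather than half of it, and notes that distances are taken $\bmod\ 1$.
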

\begin{proof}
Proposition \ref{p_455} claims that doubling map has
finitely many periodic points of periods less or equal $n$.
Hence $\delta(x,n) = \min_{y \in Per_k, k \le n} |x-y|$, where $Per_k$ is the set
of all periodic points of period $k$, is well defined.
Moreover, the interval
$\left( x - \delta(x,n), x + \delta(x,n) \right)$ does not contain any periodic points
of period less or equal to $n$.
Recall that all numbers are considered $\mod 1$ and we identify $0$ and $1$.
\end{proof}

\subsection{Symbolic Dynamics.}
We can view all real numbers between 0 and 1 as binary numbers
represented by one-sided infinite sequences of
zeros and ones. In that case, the result of applying a doubling map
is a number whose binary representation is obtained from the original one by
erasing the first symbol and leaving the rest unchanged.
This allows us to introduce the symbolic dynamics for the map under study.

Let $\Omega(m)$ be a finite alphabet (set of symbols) of size $m$.
A word $w$ is a sequence of
symbols from $\Omega(m)$ of a finite or infinite length, $w = \{w_i\}_{i=1}^{j}, w_i \in \Omega(m)$.
Let $|w|$ denote the length of a word $w$.
The set $\Lambda^{+}_{\Omega(m)}$ consists of all one-sided infinite words,
i.e  $\Lambda^{+}_{\Omega(m)} = \left\{ \{ w_i \}_{i=1}^{\infty} : w_i \in \Omega(m) \right\}$.
For a word $w=\{ w_i \}_{i=1}^{k}$ of a finite length $|w|=k$ and a positive integer $n$ define a
\emph{cylinder set} in $\Lambda^{+}_{\Omega(m)}$,
$$
C_w(n) = \left\{W \in \Lambda^{+}_{\Omega(m)} : W_{n+i-1} = w_i, i = 1 \ldots |w| \right\} \subset \Lambda.
$$
Consider now a Bernoulli measure $\hat{\lambda}$
on the collection of cylinder sets and extend it
to the $\sigma$-algebra generated by this collection (see, for example, \cite{Katok}).
In particular, the measure of the cylinder $C_w(1)$ is then given by
$$
\hat{\lambda} (C_w(1)) = m^{-k}.
$$

The shift map of $\Lambda$ into itself is defined as
$\left( \sigma(w) \right)_i = w_{i+1}$,
i.e. $\sigma$ drops the first symbol and shifts the whole sequence to the left.
The shift map preserves the Bernoulli measure.
Then the triplet $\left( \Lambda^{+}_{\Omega(m)},  \sigma , \hat{\lambda} \right)$
defines a measurable dynamical system.

The doubling map is metrically equivalent to this one-sided shift on
the space of infinite binary ($m=2$) sequences (see, for example
\cite{Lind} for details). A Markov hole (see Section
\ref{s_3254125}) of the size $2^{-N}$ corresponds to a cylinder
defined by a word $w$ of the size $N$ and located at the first
position, $C_w (1)$.
The periodic points for the doubling map correspond to periodic
words in the symbolic space.

\subsection{Escape rate.} \label{s_51223415}

Suppose that Markov hole $I_{i,N}$ corresponds to the cylinder $C_w(1)$, $|w|=N$.
Then the set of points that do not enter the hole during the first $n$ iterations of the
doubling map corresponds to the set of points in $\Lambda^{+}_{\Omega(m)}$
that do not enter the $C_w(1)$ after applying a shift map $n$ times,
i.e. infinite words that do not contain the word $w$
in the first $n+N$ positions. Let $c_w(n+N)$ be the number of such infinite words,
$$
c_w(k) = card \left\{v \in \Lambda^{+}_{\Omega(m)} : \sigma^i (v) \notin C_w(1), i=1 \ldots k - | w | + 1 \right\}.
$$

Since the escape rate for the doubling map into the hole $I_{i,N}$ equals to the escape rate
for the shift map into a corresponding cylinder set $C_w(1)$ we have that
$$
\rho(I_{i,N}) = \rho(C_w(1)) = - \lim_{n \rightarrow \infty} \frac{1}{n} \ln \frac{c_w(n+N)}{2^{n+N}},
$$
if the limit exists. The next lemma shows that this limit exists indeed.

\begin{lemma} \label{l_56453}
The escape rate $\rho(C_w(1))$ is well defined and depends only on $w$.
Moreover,
\begin{equation}\label{eq_289890234}
\rho(C_w(1)) = - \lim_{n \rightarrow \infty} \frac{1}{n} \ln \frac{c_w(n+N)}{2^{n}} = - \ln \frac{\theta_w}{2},
\end{equation}
where $\theta_w < 2$ is a constant depending on $w$.
\end{lemma}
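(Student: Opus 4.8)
The plan is to identify $\rho(C_w(1))$ with the exponential growth rate of the counting sequence $c_w(k)$ and to extract that rate by a subadditivity (Fekete) argument, which has the advantage of requiring no spectral hypotheses on a transfer matrix. First I would record the reduction already prepared in the text: the points surviving $n$ iterations form a union of cylinders of length $n+N$, so $1-\lambda(\Omega_n(I_{i,N})) = c_w(n+N)\,2^{-(n+N)}$, and since $-\tfrac1n\ln 2^{-N} = \tfrac{N\ln 2}{n}\to 0$, it suffices to analyze the single quantity $\tfrac1n\ln c_w(n+N)$ and to show $\tfrac1n\ln c_w(n+N)\to \ln\theta_w$ for an appropriate constant $\theta_w$.

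The key structural observation is submultiplicativity of $c_w$. Any length-$(k+l)$ word avoiding $w$ as a factor restricts to a length-$k$ prefix and a length-$l$ suffix, each of which still avoids $w$; the resulting map into (words of length $k$ avoiding $w$)$\times$(words of length $l$ avoiding $w$) is injective, so $c_w(k+l)\le c_w(k)\,c_w(l)$ (no boundary-straddling subtlety arises here, since an omitted straddling occurrence only means the map is not surjective, which is harmless for an upper bound). Noting that $c_w(k)\ge 1$ for every $k$ (at least one of $0^k,1^k$ avoids the single word $w$), the sequence $\ln c_w(k)$ is finite and subadditive, so Fekete's lemma yields existence of
\[
\lim_{k\to\infty}\tfrac1k\ln c_w(k)=\inf_{k}\tfrac1k\ln c_w(k)=:\ln\theta_w,\qquad \theta_w:=\lim_{k\to\infty}c_w(k)^{1/k}.
\]
Because $(n+N)/n\to 1$, the limit taken along the shifted index $n+N$ agrees with this one, and since every object in sight is a function of the sequence $c_w$, which is determined by $w$ alone, this simultaneously establishes that the escape rate is well defined and depends only on $w$. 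Substituting back gives $\rho(C_w(1))=\ln 2-\ln\theta_w=-\ln(\theta_w/2)$, exactly the asserted formula.

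It remains to prove the strict inequality $\theta_w<2$. For this I would partition the first $k$ symbols into $\lfloor k/N\rfloor$ disjoint blocks of length $N$; a word avoiding $w$ in particular does not equal $w$ on any of these blocks, so the fraction of length-$k$ words avoiding $w$ satisfies $c_w(k)/2^k \le (1-2^{-N})^{\lfloor k/N\rfloor}$. Taking $k$th roots and letting $k\to\infty$ gives $\theta_w/2 \le (1-2^{-N})^{1/N}<1$, hence $\theta_w\le 2(1-2^{-N})^{1/N}<2$, which completes the proof.

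The genuinely delicate point is not in this lemma but in what the later results will demand: the Main Theorem's comparison of survival probabilities for all finite times $n\ge\tau$ will need the sharper two-sided estimate $c_w(k)\asymp\theta_w^k$, which is naturally obtained by realizing the $w$-avoiding sequences as a subshift of finite type and applying Perron--Frobenius to the transfer matrix on the Aho--Corasick (KMP) matching states, after deleting the absorbing "completed $w$" state. The hard part will be that primitivity of this matrix is \emph{not} uniform in $w$ --- for instance for $w=01$ the avoiding shift collapses to eventually constant sequences and the matrix fails to be irreducible and aperiodic --- so the clean eigenvalue identification of $\theta_w$ cannot simply be asserted for every $w$. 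For the present statement, however, the subadditive route sidesteps this entirely, and the only routine care needed is the verification $c_w(k)\ge 1$ and the handling of the leftover symbols in the block bound.
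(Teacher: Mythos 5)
Your proof is correct, but it follows a genuinely different route from the paper's. The paper proves this lemma by citing the Guibas--Odlyzko two-sided estimate $c_1\theta_w^n \le c_w(n) \le c_2\theta_w^n$ for $n\ge n_0$, with $\theta_w<2$, and then sandwiching $\frac{1}{n}\ln\frac{c_w(n+N)}{2^n}$ between two quantities converging to $\ln\frac{\theta_w}{2}$; both the existence of $\theta_w$ and the bound $\theta_w<2$ are imported wholesale from the combinatorics-on-words literature. You instead derive everything from scratch: submultiplicativity $c_w(k+l)\le c_w(k)\,c_w(l)$ together with $c_w(k)\ge 1$ and Fekete's lemma gives existence of $\theta_w=\lim_k c_w(k)^{1/k}=\inf_k c_w(k)^{1/k}$, and your block-counting bound yields $\theta_w<2$ (even more directly, $\theta_w\le c_w(N)^{1/N}=(2^N-1)^{1/N}<2$, since exactly one word of length $N$ contains $w$). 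Your handling of the routine points --- injectivity of the prefix/suffix map for the upper bound, $c_w(k)\ge 1$ via one of $0^k,1^k$, and the index shift $(n+N)/n\to 1$ --- is sound. What your argument buys is a self-contained, elementary proof of exactly what the lemma asserts; what it gives up is the two-sided bound $c_w(n)\asymp\theta_w^n$ with explicit constants, which the paper does rely on later (it is invoked again in the proof of the monotonicity theorem, and the finite-time comparisons in the Main Theorem rest on Eriksson's and M\r{a}nsson's non-asymptotic results), so the external citations cannot be dispensed with globally --- a limitation you correctly flag yourself.
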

\begin{proof}
It is known \cite{GuiOdl1} that there exists a positive integer $n_0$
such that for all $n \ge n_0$
\begin{equation}\label{e_764892370}
c_1 \theta_w ^n \le c_w (n) \le c_2 \theta_w ^n,
\end{equation}
for some constants $c_1$, $c_2$, and $\theta_w < 2$ that depend only on $w$.
Therefore,
\begin{align*}
\frac{n+N}{n} \ln \frac{\theta_w}{2} + \frac{1}{n} \ln c_1 \le \frac{1}{n} \ln \frac{c_w(n+N)}{2^n}  \\
\le \frac{n+N}{n} \ln \frac{\theta_w}{2} + \frac{1}{n} \ln c_2.
\end{align*}

By letting $n$ go to infinity we obtain that
$$
\lim_{n \rightarrow \infty} \frac{1}{n} \ln \frac{c_w(n+N)}{2^n} = \ln \frac{\theta_w}{2} .
$$
\end{proof}

Our next goal is to determine how does $c_w(k)$ depend on $w$.

\subsection{Combinatorics on words.} \label{s_123456}
As we have seen above in order to compute escape rate we need to
count the number of binary words of a fixed length that do not
contain a certain subword, $c_w(k)$. We use some results from the
theory of combinatorics on words to obtain this number.

In \cite{GuiOdl1} Guibas and Odlyzko studied (introduced, according
to Guibas and Odlyzko, by J. Convay) a function from the set of
finite words to itself called an autocorrelation function,
$corr(w)$. Let $w$ be a binary word of the size $k$. Then the value
$corr(w) = [b_1 \ldots b_k]$ is determined in the following way.
Place a copy of the word $w$ under the original and shift it to the
right by $l$ digits. If the overlapping parts match then $b_{l+1} =
1$. Otherwise we set $b_l = 0$. In particular, we always have
$b_1=1$ (since the word matches itself).

Consider the following example. Suppose that $w=[10100101]$, then we have
$$
\begin{tabular}{|c|ccccccccccccccc|c|}
\cline{1-1}\cline{17-17}
$l$ &  &  &  &  &  &  &  &  &  &  &  &  &  &  &  & $b_{l+1}$ \\ \hline
0 & \textbf{1} & \textbf{0} & \textbf{1} & \textbf{0} & \textbf{0} & \textbf{1} & \textbf{0} & \textbf{1} & \multicolumn{1}{|c}{} &  &  &  &  &  &  & 1 \\ \hline
1 &  & 1 & 0 & 1 & 0 & 0 & 1 & 0 & \multicolumn{1}{|c}{1} &  &  &  &  &  & & 0 \\ \hline
2 &  &  & 1 & 0 & 1 & 0 & 0 & 1 & \multicolumn{1}{|c}{0} & 1 &  &  &  &  & & 0 \\ \hline
3 &  &  &  & 1 & 0 & 1 & 0 & 0 & \multicolumn{1}{|c}{1} & 0 & 1 &  &  &  & & 0 \\ \hline
4 &  &  &  &  & 1 & 0 & 1 & 0 & \multicolumn{1}{|c}{0} & 1 & 0 & 1 &  &  & & 0 \\ \hline
5 &  &  &  &  &  & \textbf{1} & \textbf{0} & \textbf{1} & \multicolumn{1}{|c}{0} & 0 & 1 & 0 & 1 &  &  & 1 \\ \hline
6 &  &  &  &  &  &  & 1 & 0 & \multicolumn{1}{|c}{1} & 0 & 0 & 1 & 0 & 1 & & 0 \\ \hline
7 &  &  &  &  &  &  &  & \textbf{1} & \multicolumn{1}{|c}{0} & 1 & 0 & 0 & 1& 0 & 1 & 1 \\ \hline
\end{tabular}
$$
We can now see that $corr(w) = [10000101]$.

Note that we can also view $corr(w)$ as a binary number and slightly abusing notations we denote both,
a binary word and a binary number,
$$
corr(w) = b_1 \cdot 2^{k-1} + b_2 \cdot 2^{k-2} + \ldots + b_k.
$$
In the example above we have
$$
corr([10100101]) = 1 \times 2^7 + 0 \times 2^6 + 0 \times 2^5 + 0 \times 2^4 + 0 \times 2^3 + 1 \times 2^2 + 0 \times 2^1 + 1 \times 2^0.
$$

Similarly, we can define a correlation polynomial,
\begin{equation} \label{eq_456235662}
f_w(z) = b_1 \cdot z^{k-1} + b_2 \cdot z^{k-2} + \ldots + b_k, \quad z \in \mathbb C,
\end{equation}
so that $f_w(2) = corr(w)$.

Autocorrelation function, among other things, describes periodicities in the word.
Consider a cylinder set $C_w(1)$ generated by the word $w$ and suppose
that $C_w(1)$ contains a periodic point
$v = \{v_i \}_{i=1}^{+\infty} \in C_w(1)$ of period $\ell$, $\ell < |w|$.
Since $v$ is a periodic point we have $v_i = v_{i+\ell}$, $\forall i \in \mathbb N$, that is
$$
v=[\underset{|w|}{\underbrace{v_{1} \ldots v_{\ell} \ldots }}  v_{1} \ldots v_{\ell} \ldots].
$$
Therefore, if $corr (w) = [b_1 \ldots b_k]$ then $b_{\ell +1} = 1$.
(The autocorrelation function was used in \cite{Lind2} for computing
of the dynamical Zeta function of subshifts of finite type.)

In order to compute the number of words of the size $n$ avoiding a
given word $w$ of a length $k$, $c_w(n)$, consider a generating
function for $c_w(n)$,
$$
F_w(z) = \sum_{n=0}^{\infty} c_w(n) z^{-i}.
$$
It was shown in \cite{GuiOdl1} that $F_w(z)$ is a rational function
and the following asymptotic estimates on $c_w(n)$ were obtained.

\begin{lemma}\label{l_3743875}
Suppose that $w$ and $u$ are two words of the same length and
$corr(w) > corr(u)$. Then,
$$
\lim_{n \to \infty} \frac{\ln c_w(n)}{n} > \lim_{n \to \infty}
 \frac{\ln c_u(n)}{n}.
$$
\end{lemma}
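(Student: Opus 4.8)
The plan is to reduce the statement to a comparison of the two exponential growth constants $\theta_w$ and $\theta_u$ and then to compare those numbers through the explicit algebraic equations they satisfy. By Lemma \ref{l_56453} together with the bounds \eqref{e_764892370} of \cite{GuiOdl1} one has $\lim_{n\to\infty}\frac1n\ln c_x(n)=\ln\theta_x$ for $x\in\{w,u\}$, so it suffices to prove $\theta_w>\theta_u$. First I would recall from \cite{GuiOdl1} the rational generating function $F_w(z)=\sum_n c_w(n)z^{-n}$ and locate its dominant singularity. Clearing denominators and substituting $\theta=1/z$ turns the relevant denominator into the polynomial $g_x(\theta):=(\theta-2)f_x(\theta)+1$, where $f_x$ is the correlation polynomial \eqref{eq_456235662}; thus $\theta_x$ is the largest real root of $g_x$, and it lies in $(1,2)$ (a useful sanity check is the non--self--overlapping case $f_x(\theta)=\theta^{k-1}$, which gives $\theta^{k}-2\theta^{k-1}+1=0$). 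I would then record the two facts actually used: the defining relation $(2-\theta_x)f_x(\theta_x)=1$, and the identity $corr(x)=f_x(2)$.

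Next I would set up the comparison. Since $g_w$ has positive leading coefficient (the term $\theta^{k}$) and $\theta_w$ is its largest real root, $g_w>0$ on $(\theta_w,\infty)$; hence it is enough to show $g_w(\theta_u)<0$, as this forces $\theta_u<\theta_w$. Using $g_u(\theta_u)=0$ and that $w,u$ have the same length (so the $z^{k}$, equivalently the top, terms cancel),
\[
g_w(\theta_u)=g_w(\theta_u)-g_u(\theta_u)=(\theta_u-2)\bigl(f_w(\theta_u)-f_u(\theta_u)\bigr),
\]
and since $\theta_u<2$ the whole problem collapses to the single inequality $f_w(\theta_u)>f_u(\theta_u)$.

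The heart of the argument, and the step I expect to be the main obstacle, is transferring the hypothesis $corr(w)>corr(u)$, which is an inequality at $\theta=2$, down to the point $\theta=\theta_u\in(1,2)$; a priori the difference $D:=f_w-f_u$ could change sign on $(\theta_u,2)$. I would resolve this using the lexicographic meaning of the binary comparison together with the equation defining $\theta_u$. Writing $corr(x)$ in bits $b_1\ldots b_k$ with $b_1=1$, the inequality $corr(w)>corr(u)$ says that at the first index $\ell_0+1$ where the correlations differ the bit of $w$ is $1$ and that of $u$ is $0$; all higher-order terms cancel, so $D(\theta)=\theta^{d}+\sum_{i=0}^{d-1}\epsilon_i\theta^{i}$ with $\epsilon_i\in\{-1,0,1\}$ and $d=k-1-\ell_0\le k-2$. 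Hence for $\theta>1$,
\[
D(\theta)\ge \theta^{d}-\frac{\theta^{d}-1}{\theta-1},
\]
and an elementary manipulation shows the right-hand side is positive precisely when $(2-\theta)\theta^{d}<1$. Finally I would feed in the defining relation for $\theta_u$: since $f_u(\theta_u)\ge\theta_u^{k-1}>\theta_u^{d}$ (because $b_1=1$ and $d\le k-2<k-1$ while $\theta_u>1$), we obtain $(2-\theta_u)\theta_u^{d}<(2-\theta_u)f_u(\theta_u)=1$. Therefore $D(\theta_u)>0$, i.e. $f_w(\theta_u)>f_u(\theta_u)$, whence $g_w(\theta_u)<0$ and $\theta_w>\theta_u$, which is the claim. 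The only auxiliary points needing care are the justification, via \cite{GuiOdl1}, that the pole producing $\theta_x$ is the simple dominant one (so that $\theta_x$ really is the largest real root of $g_x$), and the bookkeeping that equal word lengths are exactly what makes the top-degree terms cancel in $g_w-g_u$.
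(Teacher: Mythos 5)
Your argument is correct, but it does not mirror the paper, because the paper offers no proof of this lemma at all: it is simply imported as a known asymptotic estimate from Guibas and Odlyzko \cite{GuiOdl1}. What you have done is reconstruct, from the rational form $F_w(z)=z f_w(z)/(1+(z-2)f_w(z))$ of the generating function, the underlying comparison of the dominant roots $\theta_w,\theta_u$ of $g_x(\theta)=(\theta-2)f_x(\theta)+1$. Your central step --- transferring $f_w(2)>f_u(2)$ down to $f_w(\theta_u)>f_u(\theta_u)$ by bounding $D(\theta)=\theta^d+\sum_{i<d}\epsilon_i\theta^i$ below by $\theta^d-(\theta^d-1)/(\theta-1)$ and then invoking $(2-\theta_u)f_u(\theta_u)=1$ together with $f_u(\theta_u)\ge\theta_u^{k-1}>\theta_u^d$ --- is sound, and the sign bookkeeping ($\theta_u-2<0$, $g_w>0$ on $(\theta_w,\infty)$, $g_x\ge 1$ on $[2,\infty)$) checks out. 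What your route buys is a self-contained proof of the strict inequality rather than a citation, at the cost of still leaning on \cite{GuiOdl1} for the one structural fact you correctly flag: that $\theta_x$ is the unique dominant, simple, real root of the denominator, hence in particular the largest real root of $g_x$ and genuinely a zero of it (not cancelled by the numerator). Two small points to make explicit in a write-up: the strict inequality $\theta_u^{k-1}>\theta_u^d$ uses both $\theta_u>1$ (true once $|w|\ge 2$, which is forced whenever the hypothesis $corr(w)>corr(u)$ is non-vacuous) and $d\le k-2$, which comes precisely from $b_1=a_1=1$; and the case $d=0$ of your lower bound should be read with the empty-sum convention, where $D(\theta)=1>0$ trivially.
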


The following non-asymptotic result relating the number of sequences that do not contain certain word to the
autocorrelation function of that word was proved in \cite{Eri2}.

\begin{lemma}\label{l_324123}
Suppose that $w$ and $u$ are two words of the same length and $corr(w) > corr(u)$. Then there exists
$\tilde{n}_0$, such that for all $n \ge \tilde{n}_0$ one has that
\begin{equation}\label{eq_4353676} c_w(n) > c_u(n).
\end{equation}
\end{lemma}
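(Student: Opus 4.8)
The plan is to deduce the statement directly from the asymptotic comparison already established in Lemma \ref{l_3743875}, so that the present lemma becomes a short separation-of-exponentials argument. Write $\alpha_w = \lim_{n\to\infty}\frac1n\ln c_w(n)$ and $\alpha_u = \lim_{n\to\infty}\frac1n\ln c_u(n)$; these limits exist (they are precisely the quantities compared in Lemma \ref{l_3743875}, and are tied to the escape rate through \eqref{eq_289890234} and the two-sided bound \eqref{e_764892370}). First I would record that the logarithms are legitimate, i.e. that $c_w(n)>0$ and $c_u(n)>0$ for every $n$: for each $n$ the word $0^{n}$ avoids $w$ unless $w=0^{|w|}$, in which case $1^{n}$ avoids $w$, so in all cases $c_w(n)\ge 1$. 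With positivity in hand, Lemma \ref{l_3743875} applied to the hypothesis $corr(w)>corr(u)$ yields the strict separation $\alpha_w>\alpha_u$.

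Then I would carry out the routine exponential-separation step. Fix any $\beta$ strictly between the two growth rates, say $\beta=\tfrac12(\alpha_w+\alpha_u)$. By the definition of $\alpha_w$ there is $N_1$ with $\frac1n\ln c_w(n)>\beta$, hence $c_w(n)>e^{\beta n}$, for all $n\ge N_1$; likewise there is $N_2$ with $c_u(n)<e^{\beta n}$ for all $n\ge N_2$. Taking $\tilde n_0=\max\{N_1,N_2\}$, for every $n\ge \tilde n_0$ one obtains
$$
c_w(n) > e^{\beta n} > c_u(n),
$$
which is exactly \eqref{eq_4353676}. In this route there is essentially no obstacle left to overcome: all of the difficulty has been absorbed into Lemma \ref{l_3743875}, whose job is to turn the inequality $corr(w)>corr(u)$ into a \emph{strict} ordering of the exponential growth rates.

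I would emphasize, however, that this argument is genuinely asymptotic and produces only the existence of a threshold $\tilde n_0$ with no control on its size, whereas the sharper, truly non-asymptotic treatment of \cite{Eri2} proceeds differently and is where the real work lies. There one compares $c_w(n)$ and $c_u(n)$ term by term through the rational generating function $F_w(z)$ of Guibas and Odlyzko, whose denominator is determined by the correlation polynomial $f_w$. The hard part will be to show that the numerical inequality $corr(w)>corr(u)=f_u(2)$ forces the dominant singularity of $F_w$ to be strictly smaller in modulus than that of $F_u$ — equivalently that the Perron root $\theta_w$ strictly exceeds $\theta_u$ — and to make this comparison uniform enough to conclude $c_w(n)>c_u(n)$ from a fixed index onward. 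This strict separation of dominant roots driven by a coefficientwise inequality on correlation polynomials is precisely the content that Lemma \ref{l_3743875} packages for us, which is why, once that lemma is granted, the present statement follows immediately by the squeeze above.
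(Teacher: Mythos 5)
Your argument is logically sound and does prove the statement as written, but it is not the paper's route: the paper offers no proof of this lemma at all, simply citing \cite{Eri2} for it (and \cite{Man1}, \cite{Man2} for the explicit threshold \eqref{eq_82451292}), whereas you derive it from the asymptotic comparison in Lemma \ref{l_3743875}, which is itself only a cited result of Guibas and Odlyzko. Granting Lemma \ref{l_3743875} as stated (in particular the \emph{strict} inequality of the growth rates, which is where the real content sits), your positivity check $c_w(n)\ge 1$ and the squeeze $c_w(n)>e^{\beta n}>c_u(n)$ for $\beta=\tfrac12(\alpha_w+\alpha_u)$ are correct and give existence of $\tilde n_0$. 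What this buys is a two-line deduction showing that the "non-asymptotic" lemma is, in its bare existence form, already implicit in the asymptotic one; what it loses — and you rightly flag this — is any control on $\tilde n_0$. That loss matters downstream: the paper's finite-time conclusions (Lemma \ref{l_6457565368} and the "for all $n\ge\tau(I_{i,N})$" clause of Theorem \ref{thm_34213}) rest not on this lemma but on the explicit value $\tilde n_0=N+\min\{i:b_i\ne a_i\}-1$ from \eqref{eq_82451292}, which genuinely requires the term-by-term generating-function comparison of \cite{Eri2}, \cite{Man1}, \cite{Man2} and cannot be recovered from your limit argument. So your proof is acceptable for the statement in isolation, but one should be aware that it substitutes a weaker tool than the one the paper actually leans on afterward.
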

Then in \cite{Man1} and \cite{Man2} this result was improved by finding an explicit value of $\tilde{n}_0$ and
expanding the result to the words of different lengths and to the systems with the alphabet of any finite size.
Specifically, consider binary words of equal length, $w$ and $u$. Suppose that $corr (w) = [b_1 \ldots b_N]$ and
$corr (u) = [a_1 \ldots a_N]$. Then,
\begin{equation} \label{eq_82451292}
\tilde{n}_0 =  N + \min \{ i : b_i \ne a_i \} -1.
\end{equation}

\subsection{Main result.} \label{s_51223416}
Lemma \ref{l_324123} leads to the following relationship between the correlation function on one hand and the
escape rate into and survival probability of the cylinder generated by the corresponding word on the other hand.

\begin{lemma} \label{l_56345467}
Suppose that $w$ and $u$ are two words of the same length. Let $C_w(1)$  and $C_u(1)$ be two cylinder sets
generated by these two words. Then
$$
corr(w) > corr(u) \Rightarrow \rho(C_w(1)) < \rho(C_u(1)).
$$
\end{lemma}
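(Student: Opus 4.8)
The plan is to reduce the desired comparison of escape rates to a comparison of the exponential growth rates of the word-counting functions $c_w$ and $c_u$, and then to read off the required inequality from the monotonicity in the autocorrelation already recorded in Lemma \ref{l_3743875}. First I would recall from Lemma \ref{l_56453} that, for a word $w$ of length $N$,
\[
\rho(C_w(1)) = -\ln\frac{\theta_w}{2} = \ln 2 - \ln \theta_w,
\]
where $\theta_w$ is the common base in the two-sided bound (\ref{e_764892370}), so that $\ln \theta_w = \lim_{n\to\infty}\frac{\ln c_w(n)}{n}$. The point to isolate is that the map $\theta \mapsto \ln 2 - \ln\theta$ is strictly decreasing; hence the conclusion $\rho(C_w(1)) < \rho(C_u(1))$ is \emph{equivalent} to $\theta_w > \theta_u$, that is, to
\[
\lim_{n\to\infty}\frac{\ln c_w(n)}{n} > \lim_{n\to\infty}\frac{\ln c_u(n)}{n}.
\]

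With this reformulation in hand, the proof is immediate: since $w$ and $u$ have the same length and $corr(w) > corr(u)$ by hypothesis, Lemma \ref{l_3743875} asserts exactly the strict inequality between these two limits. Combining the two displays yields $\theta_w > \theta_u$, and therefore $\rho(C_w(1)) = \ln 2 - \ln\theta_w < \ln 2 - \ln\theta_u = \rho(C_u(1))$, which is the claim.

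Since the analytic heart of the statement — the strict comparison of exponential growth rates of words avoiding $w$ versus $u$ — is supplied by the cited combinatorics-on-words result of Guibas and Odlyzko (Lemma \ref{l_3743875}), the only genuine work left inside this lemma is bookkeeping: one must check that passing from $c_w(n)$ to the shifted, renormalized quantity $c_w(n+N)/2^n$ appearing in (\ref{eq_289890234}) alters nothing at the exponential scale, because the shift contributes only the bounded factor $\theta_w^{N}$ and the normalization contributes the constant $2^{N}$, both of which vanish under $\frac{1}{n}\ln(\cdot)$, together with the constants $c_1,c_2$ of (\ref{e_764892370}). The step I would flag as the one requiring care is preserving \emph{strictness} in the limit. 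I would emphasize that Lemma \ref{l_324123}, cited just before the statement, gives only the termwise inequality $c_w(n) > c_u(n)$ for $n \ge \tilde n_0$, and dividing by $n$ and letting $n\to\infty$ degrades this to the weak bound $\rho(C_w(1)) \le \rho(C_u(1))$; it is precisely the strict asymptotic form of Lemma \ref{l_3743875} that forces the strict inequality here, while Lemma \ref{l_324123} is instead the correct instrument for the finite-time survival-probability comparison needed later in the Main Theorem.
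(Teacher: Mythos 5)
Your argument is correct and follows exactly the paper's route: express each escape rate via Lemma \ref{l_56453} as $\ln 2 - \ln\theta$, then invoke the strict asymptotic comparison of Lemma \ref{l_3743875} to get $\theta_w > \theta_u$ and hence the reversed strict inequality of escape rates. Your added remarks --- that the shift by $N$ and the normalization by $2^n$ are invisible at the exponential scale, and that Lemma \ref{l_324123} alone would only yield the non-strict inequality --- are accurate elaborations of details the paper leaves implicit.
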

\begin{proof}
By Lemma \ref{l_56453}
\begin{align*}
\rho(C_u(1)) = - \lim_{n \to \infty} \frac{1}{n} \ln \frac{c_u(n+N)}{2^{n}}, \\
\rho(C_w(1)) = - \lim_{n \to \infty} \frac{1}{n} \ln \frac{c_w(n+N)}{2^{n}},
\end{align*}

Using the results of Lemma \ref{l_3743875} we complete the proof.
\end{proof}

As before, the number $1 - \lambda \left( \Omega_n (C_w(1))\right)$ is called a survival probability of the set
$C_w(1)$.

\begin{lemma} \label{l_6457565368}
Suppose that $w$ and $u$ are two words of the same length with $corr(w) > corr(u)$, Let $corr (w) = [b_1 \ldots
b_N]$ and $corr (u) = [a_1 \ldots a_N]$. Let $C_w(1)$  and $C_u(1)$ be two cylinder sets generated by these two
words. Then for all $n \ge \min \{ i : b_i \ne a_i \} -1$,
$$
1 - \lambda \left( \Omega_n (C_w(1))\right) > 1 - \lambda \left( \Omega_n (C_u(1))\right).
$$
\end{lemma}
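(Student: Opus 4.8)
The plan is to reduce the inequality between survival probabilities to the purely combinatorial inequality $c_w(n+N) > c_u(n+N)$ and then to apply the explicit finite-time bound \eqref{eq_82451292} supplied by Lemma \ref{l_324123}.

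First I would establish the exact (non-asymptotic) identity
$$
1 - \lambda \left( \Omega_n (C_w(1)) \right) = \frac{c_w(n+N)}{2^{n+N}}.
$$
This is just the symbolic reformulation already used to derive the escape rate: a point survives the first $n$ iterations precisely when its binary expansion contains no occurrence of $w$ among its first $n+N$ symbols, i.e. when its length-$(n+N)$ prefix is one of the $c_w(n+N)$ words of that length avoiding $w$. Each such prefix determines a cylinder of measure $2^{-(n+N)}$, and these cylinders partition the survival set, which yields the displayed equality. (This is the cylinder version of the interval identity $1 - \lambda(\Omega_n(I_{i,N})) = f^n_{i,N}/2^{n+N}$ established earlier.)

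Since the same denominator $2^{n+N}$ occurs for both $w$ and $u$, the desired strict inequality
$$
1 - \lambda \left( \Omega_n (C_w(1)) \right) > 1 - \lambda \left( \Omega_n (C_u(1)) \right)
$$
is equivalent to $c_w(n+N) > c_u(n+N)$. Now I would invoke Lemma \ref{l_324123}: since $corr(w) > corr(u)$, one has $c_w(m) > c_u(m)$ for every $m \ge \tilde{n}_0$, where by \eqref{eq_82451292} the threshold is $\tilde{n}_0 = N + \min\{ i : b_i \ne a_i \} - 1$. Setting $m = n+N$, the condition $m \ge \tilde{n}_0$ becomes exactly $n \ge \min\{ i : b_i \ne a_i \} - 1$, which is the hypothesis of the lemma. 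Hence $c_w(n+N) > c_u(n+N)$ for all such $n$, and the proof is complete.

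The argument is essentially a bookkeeping translation of results already in hand, so there is no serious analytic obstacle. The only point demanding care is the off-by-$N$ shift in the indices: one must verify that the length parameter $m$ governing the combinatorial count relates to the dynamical time $n$ by $m = n+N$, and that under this substitution the explicit threshold \eqref{eq_82451292} for the words translates precisely into the stated lower bound $\min\{ i : b_i \ne a_i \} - 1$ on $n$, with no loss. Getting that alignment right is exactly what makes the bound sharp, so that it holds for all finite times down to this moment rather than merely asymptotically.
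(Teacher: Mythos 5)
Your proposal is correct and follows essentially the same route as the paper's proof: both reduce the survival probabilities to the combinatorial counts via the identity $1 - \lambda\left( \Omega_n (C_w(1))\right) = c_w(n+N)/2^{n+N}$, then apply Lemma \ref{l_324123} together with the explicit threshold \eqref{eq_82451292} and perform the same shift $m = n+N$ to obtain the stated bound on $n$. Your write-up is in fact slightly more careful than the paper's in spelling out why the cylinder identity holds.
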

\begin{proof}
In view of \ref{eq_4353676} there exists $\tilde{n}_0>0$ such that $c_w(n+N)
> c_u (n+N)$ for all $n + N \ge \tilde{n}_0$. Therefore,
$$
\frac{c_w(n+N)}{2^{n+N}} >  \frac{c_u(n+N)}{2^{n+N}},\quad n + N \ge \tilde{n}_0.
$$
But,
$$
1 - \lambda \left( \Omega_n (C_w(1))\right) =\frac{c_w(n+N)}{2^{n+N}}, \quad 1 - \lambda \left( \Omega_n
(C_u(1))\right)=\frac{c_u(n+N)}{2^{n+N}}.
$$
It follows from Equation \ref{eq_82451292} that we must have $ n + N \ge N + \min \{ i : b_i \ne a_i \} -1$.
Therefore,
$$
n \ge \min \{ i : b_i \ne a_i \} - 1.
$$
\end{proof}

Finally, we turn to the proof of the main theorems of this section, which state that the escape rate is larger
for the hole that has a larger Poincar\'{e} recurrence time (asymptotic result). Moreover, the survival
probability is smaller for the hole that has a larger Poincar\'{e} recurrence time (non-asymptotic result).

\begin{figure}
\centering
\includegraphics[width=4in]{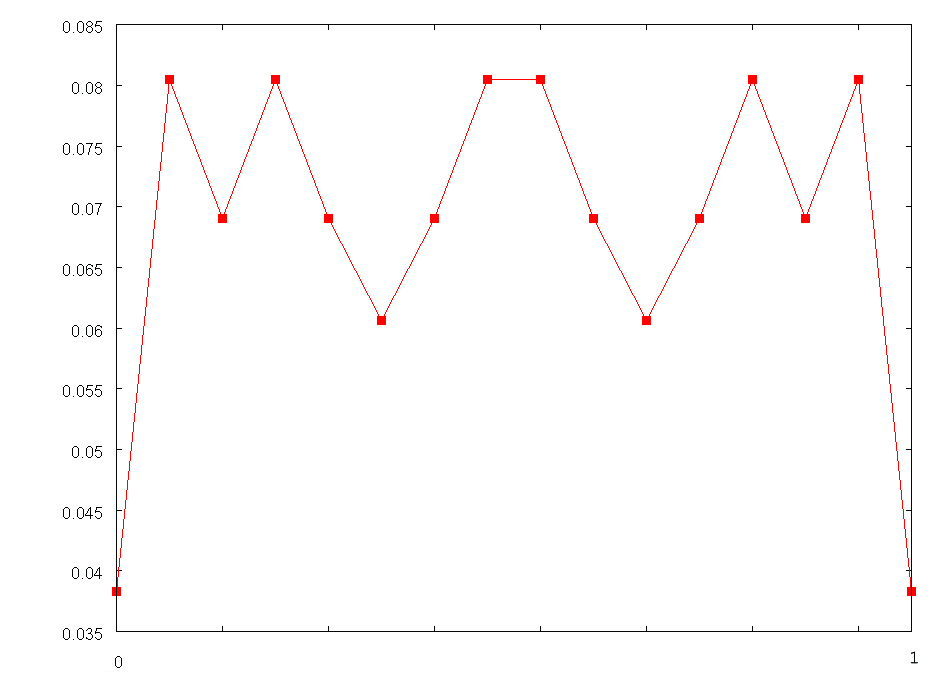}
\caption{Escape rate vs. position of the Markov hole of size $2^{-N}$, $N=4$.} \label{fig_esc_16}
\end{figure}
\begin{theorem} \label{thm_34213}
Let $I_{i,N}$ and $I_{j,N}$ be two Markov holes for the doubling map of a unit interval. Suppose that
$\tau(I_{j,N}) > \tau(I_{i,N})$. Then,
$$
\rho(I_{j,N}) > \rho(I_{i,N}).
$$
Moreover, for all $n \ge  \tau(I_{i,N})$,
$$
1 - \lambda \left( \Omega_n (I_{j,N})\right) < 1 - \lambda \left(\Omega_n (I_{i,N})\right).
$$
\end{theorem}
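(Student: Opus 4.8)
The plan is to transport the entire statement into the symbolic setting and then read off both conclusions from the autocorrelation estimates already recorded in Lemmas \ref{l_56345467} and \ref{l_6457565368}. Write $u$ and $w$ for the binary words of length $N$ with $I_{i,N}\leftrightarrow C_u(1)$ and $I_{j,N}\leftrightarrow C_w(1)$, so that $\rho(I_{i,N})=\rho(C_u(1))$, $\rho(I_{j,N})=\rho(C_w(1))$, and likewise the survival probability $1-\lambda(\Omega_n(I_{\cdot,N}))$ is computed through the corresponding cylinder. Everything then reduces to comparing $corr(u)$ and $corr(w)$ as binary numbers.

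The crux is a dictionary between the Poincar\'e recurrence time and the autocorrelation word. I would first show that $\tau(I_{i,N})$ is exactly the least $\ell\ge 1$ for which the prefix relation $w_s=w_{s+\ell}$ ($1\le s\le N-\ell$) holds, i.e. the least $\ell<N$ with $b_{\ell+1}=1$ in $corr(w)=[b_1\ldots b_N]$, with the convention that $\tau=N$ when no such overlap exists. Here one uses that $\overline{w}$ always furnishes a period-$N$ point in the cylinder, so $\tau\le N$, together with the already noted fact that a self-overlap $b_{\ell+1}=1$ signals a period-$\ell$ point of $C_w(1)$ (and its converse, which I would verify). Since $b_1=1$ always, this identifies $\tau(I_{\cdot,N})$ with the location of the first $1$ among $b_2,\ldots,b_N$: the later this first $1$ occurs, the larger $\tau$ is.

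With this dictionary in hand, the hypothesis $\tau(I_{j,N})>\tau(I_{i,N})$ becomes a statement about leading bits. Writing $corr(u)=[a_1\ldots a_N]$ and $corr(w)=[b_1\ldots b_N]$, set $p=\tau(I_{i,N})$; then $a_2=\cdots=a_p=0$, $a_{p+1}=1$, while $\tau(I_{j,N})>p$ forces $b_2=\cdots=b_{p+1}=0$. Hence $a_s=b_s$ for $s\le p$ and the first discrepancy occurs at position $p+1$, where $a_{p+1}=1>0=b_{p+1}$. Consequently $corr(u)>corr(w)$, and moreover $\min\{s:a_s\ne b_s\}=p+1=\tau(I_{i,N})+1$.

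The two conclusions now follow mechanically. For the escape rate, $corr(u)>corr(w)$ together with Lemma \ref{l_56345467} gives $\rho(C_u(1))<\rho(C_w(1))$, that is $\rho(I_{i,N})<\rho(I_{j,N})$. For the survival probability, Lemma \ref{l_6457565368}, applied with $u$ in the role of the word of larger correlation, yields $1-\lambda(\Omega_n(C_u(1)))>1-\lambda(\Omega_n(C_w(1)))$ for all $n\ge \min\{s:a_s\ne b_s\}-1=\tau(I_{i,N})$, which is exactly the claimed non-asymptotic inequality down to the stated threshold. I expect the genuine work to be concentrated in the dictionary step: verifying that $\tau$ reads off the first self-overlap of the word, correctly handling the boundary holes containing the fixed points $0$ and $1$ (whose words $0^N$ and $1^N$ have maximal correlation and $\tau=1$), and confirming $\tau\le N$ so that the first differing bit genuinely sits at position $\tau(I_{i,N})+1$ rather than beyond the length of the correlation word.
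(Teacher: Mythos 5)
Your proposal follows essentially the same route as the paper's own proof: encode the holes as cylinders, identify $\tau(I_{\cdot,N})$ with the position of the first nonzero autocorrelation bit after $b_1$, deduce that the first discrepancy between the two correlation words sits at position $\tau(I_{i,N})+1$, and then invoke Lemmas \ref{l_56345467} and \ref{l_6457565368} for the asymptotic and finite-time conclusions respectively. The only difference is cosmetic (your naming of $u$ and $w$ is swapped relative to the paper), and your extra care about the converse of the overlap/periodic-point correspondence and the boundary case $\tau=N$ is a point the paper glosses over rather than a divergence in method.
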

\begin{proof}
Let $w$, $|w|=N$, be the word that codes the hole $I_{i,N}$ and let $C_{w}(1)$ be the cylinder generated by that
word. Similarly, let $u$, $|u|=N$, be the word that codes the hole $I_{j,N}$ and let $C_{u}(1)$ be the cylinder
generated by that word. Consider the autocorrelation functions of $w$ and $u$, $corr (w) = [b_1 \ldots b_N]$ and
$corr (u) = [a_1 \ldots a_N]$. As we have seen before, $b_1 = a_1 = 1$. Also, $b_{\ell+1} = 1$ if and only if
$C_{w}(1)$ contains a periodic point of period $\ell$, $1 \le \ell < N$. The same is true for $a_{\ell+1}$.
Hence,  the first non-zero element of $[b_1 \ldots b_N]$ after $b_1$ is $b_{\tau(I_{i,N})+1}$. Thus, if
$\tau(I_{j,N}) > \tau(I_{i,N})$ then $corr (u) < corr (w)$. But by Lemma \ref{l_56345467}
$$
\rho(I_{j,N}) = \rho(C_{u(1)}) > \rho(C_{w(1)}) = \rho(I_{i,N}).
$$

In order to compare surviving probabilities we use Lemma \ref{l_6457565368}. In addition,  the argument in the
preceding paragraph shows that $\min \{ i : b_i \ne a_i \} = \tau(I_{i,N}) + 1$. Since $\lambda \left( \Omega_n
(I_{j,N})\right) = \lambda \left( \Omega_n (C_u(1))\right)$ and $\lambda \left( \Omega_n (I_{i,N})\right) =
\lambda \left( \Omega_n (C_w(1))\right)$, we obtain that
$$
1 - \lambda \left( \Omega_n (I_{j,N})\right) < 1 - \lambda \left(\Omega_n (I_{i,N})\right),
$$
for all $n \ge  \tau(I_{i,N})$.
\end{proof}

\subsection{Local escape rate.}\label{sec_4534345234}

Our next task is to investigate what happens to the escape rate as
the size of the hole is decreasing. Let $I_{N}(x)$ be an element of
a partition $\mathcal I_N$ (see Section \ref{s_3254125a}) that
contains a point $x$.

It is known \cite{Afr2} that for a large class of symbolic systems,
which includes the expanding maps of the interval considered here,
for almost every point Poincar\'{e} recurrence time grows linearly
with $N$ as a size of the hole exponentially decreases, $\lim_{N
\rightarrow \infty} \frac{\tau{(I_{N}(x)})}{N} = 1$.

We want to obtain a similar result for the escape rate.
The following theorem answers this question.
In particular, it says that
the escape rate decreases linearly with respect to the size of the hole.

\begin{figure}
\centering
\includegraphics[width=4in]{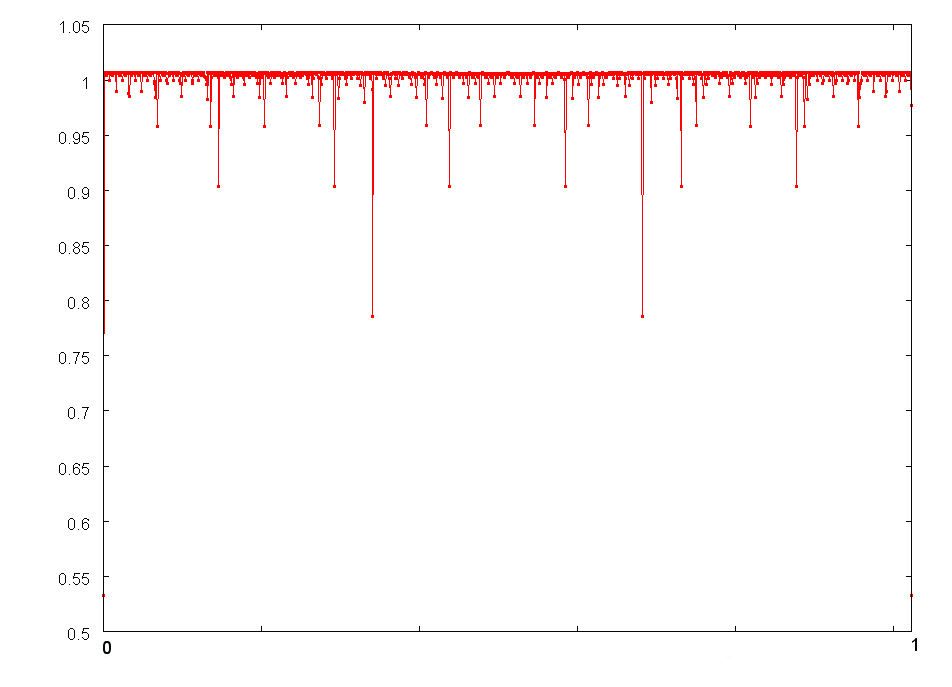}
\caption{Plot of $\frac{\rho(I_{N}(x))}{\lambda(I_{N}(x))}$ vs $x$, $N=10$.}
\label{loc_1024}
\end{figure}
\begin{theorem}\label{thm_55252512}
Consider the doubling map of the unit interval. Suppose that $x \in
[0,1]$ . Then the following statements are true:
\begin{enumerate}
    \item if $x$ is a periodic point of period $m$ then
$$
\lim_{N \to \infty} \frac{\rho(I_{N}(x))}{\lambda(I_{N}(x))} = 1 -
\frac{1}{2^m};
$$
    \item if $x$ is a non-periodic point then
$$
\lim_{N \to \infty} \frac{\rho(I_{N}(x))}{\lambda(I_{N}(x))} = 1.
$$
\end{enumerate}
\end{theorem}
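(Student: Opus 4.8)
The plan is to reduce the theorem to a sharp statement about the rate at which the growth exponent $\theta_w$ of $c_w(n)$ approaches $2$ as the hole shrinks. Since $I_N(x)$ is a Markov hole of measure $\lambda(I_N(x))=2^{-N}$ coded by a word $w$ of length $N$, Lemma \ref{l_56453} gives $\rho(I_N(x))=\ln(2/\theta_w)$, so with $y_*:=1/\theta_w$,
$$
\frac{\rho(I_N(x))}{\lambda(I_N(x))} = 2^N\ln\frac{2}{\theta_w} = 2^N\ln(2y_*).
$$
Thus everything is controlled by the reciprocal growth rate $y_*$. To locate it I would invoke the Guibas--Odlyzko generating function \cite{GuiOdl1}: writing the autocorrelation bits from \ref{eq_456235662} in ascending-power form as the correlation polynomial $C_w(z)=\sum_{l=0}^{N-1}b_{l+1}z^{l}$, one has $\sum_n c_w(n)z^n = C_w(z)/\big(z^N+(1-2z)C_w(z)\big)$, whose smallest positive singularity is the radius of convergence $y_*=1/\theta_w$. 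Hence $y_*$ is the smallest positive root of $z^N+(1-2z)C_w(z)=0$, i.e.
$$
y_*^{\,N} = (2y_*-1)\,C_w(y_*).
$$

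Second, I would pin down $C_w$ near $z=\tfrac12$ from the distribution of periodic points, using the correspondence established before Theorem \ref{thm_34213}: $b_{l+1}=1$ exactly when $I_N(x)$ contains a periodic point of period $l$. If $x$ is non-periodic, Corollary \ref{l_65435373} (equivalently $\tau(I_N(x))\to\infty$) shows that for every fixed $l$ the hole eventually contains no period-$l$ point, so the smallest nonzero correlation shift is $\tau(I_N(x))\to\infty$ and $C_w(\tfrac12)\to 1$. If $x$ is periodic with minimal period $m$, then for large $N$ the hole contains $x$ (period $m$) but no point of smaller period, and a period-$l$ point only when $m\mid l$; hence the correlation shifts are exactly the multiples of $m$ in $[0,N)$ and
$$
C_w\!\left(\tfrac12\right)\longrightarrow \sum_{j\ge 0}2^{-jm}=\frac{1}{1-2^{-m}}.
$$

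Third, I would run the asymptotic root analysis. One first shows $y_*\to\tfrac12$: for $z$ in any fixed compact subinterval of $(\tfrac12,1)$ the term $z^N\to0$ while $(2z-1)C_w(z)$ stays bounded below, so the root must approach the balance point $\tfrac12$ (the endpoints are handled by the crude bounds $1<\theta_w<2$). Writing $y_*=\tfrac12+u_N$ with $u_N>0$, the root equation gives $u_N=y_*^{\,N}/\big(2C_w(y_*)\big)$, so $u_N\sim 2^{-N-1}/C_w(\tfrac12)$ and in particular $Nu_N\to0$. Therefore $(2y_*)^N=(1+2u_N)^N\to1$, and since $y_*\to\tfrac12$ forces $C_w(y_*)\to\lim C_w(\tfrac12)$,
$$
\frac{\rho(I_N(x))}{\lambda(I_N(x))} = 2^N\ln(1+2u_N) = (1+o(1))\,2^N\cdot 2u_N = (1+o(1))\frac{(2y_*)^N}{C_w(y_*)} \longrightarrow \frac{1}{\lim C_w(\tfrac12)}.
$$
Substituting the two limiting values of $C_w(\tfrac12)$ from the previous step yields $1-2^{-m}$ and $1$, respectively.

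The main obstacle is the third step: making the self-consistent (bootstrap) estimate of the dominant root rigorous. One must establish $y_*\to\tfrac12$ unconditionally --- the harmless-looking fact that a vanishing hole has vanishing escape rate, which here cannot be assumed since it is essentially part of what is being proved --- and then show that $u_N=y_*-\tfrac12$ is of exact order $2^{-N}$, so that $Nu_N\to0$ and the crucial factor $(2y_*)^N\to1$. A secondary technical point, needed to justify $C_w(y_*)\to\lim C_w(\tfrac12)$ in the non-periodic case, is a uniform tail bound on high-order correlations (shifts $l$ close to $N$): since the smallest nonzero shift is $\tau(I_N(x))$, the tail is $\le y_*^{\tau}/(1-y_*)\to0$, which is exactly where $\tau(I_N(x))\to\infty$ (the Afraimovich-type estimate \cite{Afr2}) enters. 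Finally, the dyadic points $x=s2^{-k}$ and the interval endpoints must be treated separately, since their symbolic codings are non-unique and they are preimages of the fixed point $0$; away from these the argument is uniform in $x$.
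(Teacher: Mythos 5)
Your proposal is correct and follows the same overall architecture as the paper's proof: both reduce the ratio to $2^N\ln(2/\theta_w)$ via Lemma \ref{l_56453}, both identify the decisive quantity as the correlation polynomial evaluated at the expansion rate (your $C_w(\tfrac12)$ is, up to normalization, the paper's $2^{-N}f_w(2)$), and both compute its limit by the same dichotomy (tail $\le 2^{1-\tau(I_N(x))}\to 0$ for non-periodic $x$; geometric series $\sum_j 2^{-jm}$ for periodic $x$). The one genuine difference is the middle step: the paper simply imports the asymptotic expansion $\ln\theta_w=\ln 2-\tfrac{1}{2f_w(2)}+\mathcal O(2^{-2N})$ from Guibas--Odlyzko \cite{GuiOdl3}, whereas you rederive it by locating the dominant singularity $y_*=1/\theta_w$ of the generating function as the smallest positive root of $z^N+(1-2z)C_w(z)$ and bootstrapping $y_*=\tfrac12+u_N$ with $u_N\sim 2^{-N-1}/C_w(\tfrac12)$. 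What you flag as the ``main obstacle'' does close: $D(z)=z^N+(1-2z)C_w(z)$ is positive on $(0,\tfrac12]$ and one checks $D(\tfrac12+2^{-N})<0$ because $(\tfrac12+2^{-N})^N\sim 2^{-N}$ while $2\cdot 2^{-N}C_w\ge 2^{-N+1}$, so $0<u_N<2^{-N}$ unconditionally and $(2y_*)^N\to 1$; this makes your version self-contained and arguably more careful about uniformity than the paper's citation (whose transcription of $2f_w(2)$ even contains an off-by-one in the exponents). One small imprecision on your side: for periodic $x$ the correlation shifts of $w$ are \emph{not} exactly the multiples of $m$ --- short overlaps near the end of the word (shifts $l>N-m$) can produce spurious correlations (e.g. $w=001100$ has shift $5$ though $m=4$) --- but by Fine--Wilf all non-multiples lie in that tail, whose contribution $\le 2^{m+1-N}$ vanishes, so the limit is unaffected; you should state the periodic case with the same tail estimate you already invoke for the non-periodic one.
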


\begin{proof}
Using equality \ref{eq_289890234} and the fact that $\lambda(I_{N}(x)) =2^{-N}$, we obtain,
$$
\lim_{N \to \infty} \frac{\rho(I_{N}(x))}{\lambda(I_{N}(x))}
= - \lim_{N \to \infty} 2^N ln \frac{\theta_w}{2},
$$
where $\theta_w$ depends on $N$.

Recall that for any word $w$ of a length $N$ we can define a correlation polynomial
\ref{eq_456235662} as
$$
f_w(z) = \sum_{i=1}^N b_i z^{N-i},
$$
where $corr (w) =[b_1, \ldots, b_N]$, $b_i$ depends on $N$, $b_i = b_i(N)$ ,
and $z \in \mathbb C$. Clearly, $b_1 = 1$, thus $deg f_w (z) = N-1$.

It was shown in \cite{GuiOdl3} that asymptotically the constant $\theta_w$ satisfies
$$
\ln \theta_w = \ln 2 - \frac{1}{2 f_w(2)} + \mathcal O (2^{-2N}).
$$
Then,
$$
\ln \theta_w = \ln 2 - \frac{1}{2^N + \sum_{i=\tau (I_{N}(x))}^N
b_i(N) 2^{N-i}} + \mathcal O (2^{-2N}).
$$
Suppose now that $x$ is non-periodic point. Then one has that,
$$
\lim_{N \to \infty} 2^{-N} \sum_{i=\tau (I_{N}(x))}^N b_i(N) 2^{N-i}
= 0.
$$
Indeed,
$$
2^{-N} \sum_{i=\tau (I_{N}(x))}^N b_i 2^{N-i} = \sum_{i=\tau
(I_{N}(x))+1}^N b_i(N) 2^{-i} \le 1.
$$
But, $\lim_{N \to \infty}b_i(N) = 0$ for all $i \ge \tau
(I_{N}(x))$.

Therefore,
\begin{align*}
\lim_{N \to \infty} \frac{\rho(I_{N}(x))}{\lambda(I_{N}(x))} =\lim_{N \to \infty} \frac{2^N}{2^N +   \sum_{i=\tau (I_{N}(x))}^N b_i(N) 2^{N-i}} \\
= \lim_{N \to \infty} \frac{1}{1 +  2^{-N}\sum_{i=\tau (I_{N}(x))}^N
b_i(N) 2^{N-i}} =1.
\end{align*}

Suppose now that $x$ is a periodic point of (minimum) period $m$.
Then,
\begin{align*}
\lim_{N \to \infty} 2^{-N} \sum_{i=\tau (I_{N}(x))}^N b_i(N) 2^{N-i} \\
= \sum_{k=1}^{\infty} 2^{-km} = \frac{1}{1-2^{-m}} - 1.
\end{align*}
Therefore,
\begin{align*}
\lim_{N \to \infty} \frac{\rho(I_{N}(x))}{\lambda(I_{N}(x))} =\lim_{N \to \infty} \frac{2^N}{2^N +   \sum_{i=\tau (I_{N}(x))}^N b_i(N) 2^{N-i}} \\
= \frac{1}{1 +  \lim_{N \to \infty} \left( 2^{-N}\sum_{i=\tau (I_{N}(x))}^N b_i(N) 2^{N-i} \right)} \\
=1 - \frac{1}{2^m}.
\end{align*}

This completes the proof.

\end{proof}

\begin{remark}
All non-periodic points form a set of full measure. Thus for almost
every point in the interval $[0,1]$ the escape rate into the hole
"centered" at a given point asymptotically, as the size of the hole
goes to zero, equals to the measure of the hole.
\end{remark}

Suppose that an interval $A$ does not contain points $x = s2^{-k}$
for all $k \le n$ for some $n>1$, $s, k,n \in \mathbb N$, i.e. $A$
does not contain an end point of an element of any Markov partition
$\mathcal I_N$, $N \le n$. Then one can find two elements of the
Markov partitions $I_{N_1}(x)$ and $I_{N_2}(x)$ so that
$$
I_{N_1}(x) \subseteq A \subseteq I_{N_2}(x).
$$
Thus, the following result holds for the arbitrary decreasing nested sequence of intervals.

\begin{figure}
\centering
\includegraphics[width=4in]{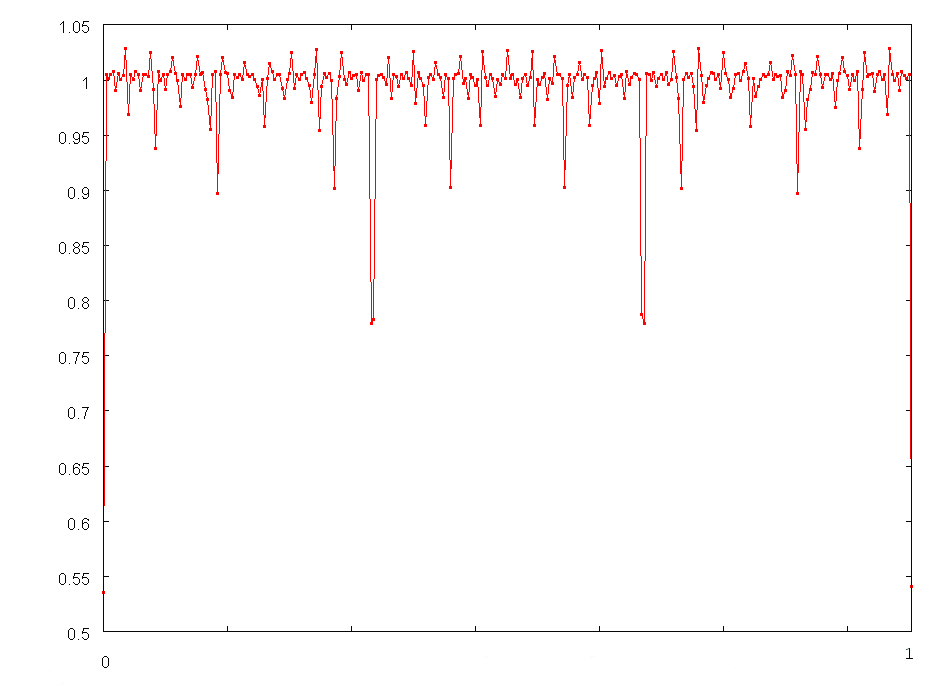}
\caption{Plot of $\frac{\rho(A(x))}{\lambda(A(x))}$ vs. $x$, $\lambda(A(x))=\frac{1}{327}$.}
\label{loc_327}
\end{figure}
\begin{corollary}\label{c_653457567}
Consider the doubling map of a unit interval. Let $x \in [0,1]$ and suppose $\{A_n (x) \}_{n=1} ^{\infty}$ is a
sequence of nested decreasing intervals with $x = \cap _{n=1} ^{\infty} A_n$ for all $n$. The following
statements are true:
\begin{enumerate}
    \item if $x$ is a periodic point of period $m$, then
$$
\lim_{n \to \infty} \frac{\rho(A_{n}(x))}{\lambda(A_{n}(x))} = 1 - \frac{1}{2^m};
$$
    \item if $x$ is a non-periodic point and $x \neq s2^{-k}, s,k \in \mathbb Z^{+}$, then
$$
\lim_{n \to \infty} \frac{\rho(A_{n}(x))}{\lambda(A_{n}(x))} = 1.
$$
\end{enumerate}
\end{corollary}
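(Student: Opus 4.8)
\section*{Proof proposal}

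The plan is to deduce the Corollary from Theorem \ref{thm_55252512} by a monotone squeeze that replaces the arbitrary intervals $A_n(x)$ by Markov cells $I_N(x)$, for which the limit of $\rho/\lambda$ is already established. The hypothesis $x \neq s2^{-k}$ is what makes this possible: it forces $x$ to lie in the interior of every Markov cell, so no $A_n(x)$ is pinned to a dyadic endpoint, and hence, as in the remark preceding the statement, each $A_n(x)$ can be trapped between two Markov cells
$$
I_{N_1(n)}(x) \subseteq A_n(x) \subseteq I_{N_2(n)}(x),
$$
with $N_1(n) \ge N_2(n)$ and, since $A_n(x)\downarrow\{x\}$, with both $N_1(n),N_2(n)\to\infty$.

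Next I would feed these inclusions into the two monotonicity facts at hand: part (1) of Proposition \ref{prop_23423}, giving $\rho(I_{N_1(n)}(x)) \le \rho(A_n(x)) \le \rho(I_{N_2(n)}(x))$, together with monotonicity of Lebesgue measure, $2^{-N_1(n)}=\lambda(I_{N_1(n)}(x)) \le \lambda(A_n(x)) \le \lambda(I_{N_2(n)}(x))=2^{-N_2(n)}$. Dividing by $\lambda(A_n(x))$ and regrouping yields
$$
\frac{\rho(I_{N_1(n)}(x))}{\lambda(I_{N_1(n)}(x))}\,\frac{\lambda(I_{N_1(n)}(x))}{\lambda(A_n(x))} \le \frac{\rho(A_n(x))}{\lambda(A_n(x))} \le \frac{\rho(I_{N_2(n)}(x))}{\lambda(I_{N_2(n)}(x))}\,\frac{\lambda(I_{N_2(n)}(x))}{\lambda(A_n(x))}.
$$
By Theorem \ref{thm_55252512}, applied along the integer sequences $N_1(n),N_2(n)\to\infty$, the two outer ratios $\rho(I_{N_j(n)}(x))/\lambda(I_{N_j(n)}(x))$ both tend to the common value $L$, where $L=1-2^{-m}$ if $x$ is periodic of period $m$ and $L=1$ otherwise. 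It then remains only to show that the two correction factors $2^{-N_1(n)}/\lambda(A_n(x))$ and $2^{-N_2(n)}/\lambda(A_n(x))$ tend to $1$, after which the squeeze gives $\rho(A_n(x))/\lambda(A_n(x))\to L$.

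The main obstacle is exactly the control of those correction factors, equivalently of the gap $N_1(n)-N_2(n)$: since $2^{-N_1(n)}\le\lambda(A_n(x))\le 2^{-N_2(n)}$, the two factors lie in $[2^{-(N_1(n)-N_2(n))},1]$ and $[1,2^{\,N_1(n)-N_2(n)}]$, so the argument closes cleanly as soon as $N_1(n)-N_2(n)$ stays bounded. Writing $d_n$ and $D_n$ for the distances from $x$ to the nearer and the farther endpoint of $A_n(x)$, one has $N_2(n)=-\log_2 D_n+O(1)$ and $N_1(n)=-\log_2 d_n+O(1)$, so the gap is $\log_2(D_n/d_n)+O(1)$, the logarithm of the aspect ratio of $A_n(x)$ about $x$. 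I would therefore finish in two stages. First, for intervals whose aspect ratio $D_n/d_n$ stays bounded the sandwich above concludes immediately. The hard part will be the eccentric case, where one side of $A_n(x)$ is vastly longer than the other and the crude sandwich is too lossy; there I would argue directly on the survival probability $1-\lambda(\Omega_k(A_n(x)))$, showing that the long side of $A_n(x)$ contributes negligibly to the escape so that $\rho(A_n(x))$ is governed by the scale $d_n$ and the ratio still converges to $L$. This eccentric case is the delicate step, and it is where the non-dyadic hypothesis on $x$ is used most heavily.
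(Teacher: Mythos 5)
Your first half---trapping $A_n(x)$ between Markov cells $I_{N_1(n)}(x)\subseteq A_n(x)\subseteq I_{N_2(n)}(x)$ and squeezing via monotonicity of $\rho$ (Proposition \ref{prop_23423}) and Theorem \ref{thm_55252512}---is exactly the argument the paper gives: its entire justification of the corollary is the sentence introducing that sandwich. You are also right that this only closes when the level gap $N_1(n)-N_2(n)$ stays bounded, since the correction factors $2^{-N_1(n)}/\lambda(A_n(x))$ and $2^{-N_2(n)}/\lambda(A_n(x))$ are controlled only by $2^{\pm(N_1(n)-N_2(n))}$. So you have correctly located a genuine gap, and it is one the paper's own proof shares.

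However, your plan for closing the gap does not work as described, for two reasons. First, the obstruction is not captured by the aspect ratio $D_n/d_n$: your identities $N_1(n)=-\log_2 d_n+O(1)$ and $N_2(n)=-\log_2 D_n+O(1)$ are only one-sided bounds. Even for \emph{centered} intervals $A_n=[x-\varepsilon_n,x+\varepsilon_n]$ (aspect ratio $1$) the gap is unbounded whenever $x$ is very well approximable by dyadic rationals: take $x=\sum_k 2^{-n_k}$ with $n_{k+1}-n_k\to\infty$ and $\varepsilon_n\approx 2^{-n_{k+1}}$; then $x$ lies within about $2^{-n_{k+1}}$ of a level-$n_k$ dyadic point, so the smallest Markov cell containing $A_n$ has level below $n_k$ while the largest one contained in $A_n$ has level near $n_{k+1}$. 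Hence your proposed dichotomy (bounded aspect ratio is immediate, eccentric case is the only hard one) is false, and the "easy" case you claim to have finished is not actually finished. Second, your sketch for the remaining case points the wrong way: if the long side contributed negligibly and $\rho(A_n(x))$ were governed by the short scale $d_n$, then since $\lambda(A_n(x))\approx D_n\gg d_n$ you would get $\rho(A_n(x))/\lambda(A_n(x))\to 0$, not $L$. Because $\rho$ is monotone under inclusion, the escape is governed by the \emph{large} scale: one should bound $\rho(A_n(x))$ from below by the escape rate of a largest Markov cell contained in $A_n(x)$ (which has measure comparable to $\lambda(A_n(x))$ but need not contain $x$, so Theorem \ref{thm_55252512} does not directly apply to it), and from above by the escape rate of a union of two or three adjacent Markov cells of comparable measure covering $A_n(x)$; the latter requires an approximate subadditivity of $\rho$ for small adjacent holes that neither you nor the paper supplies. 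As it stands the proposal, like the paper's own argument, does not prove the corollary for arbitrary nested sequences.
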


\begin{remark}
Observe that in Fig. \ref{loc_327} there are many local maxima of $\frac{\rho(A(x))}{\lambda(A(x))}$ which are
essentially larger than one. These happen at the places where the intervals $A(x)$ have maximum Poincar\'{e}
return time among all such intervals $A(x)$ with length of $A(x)$ being fixed. Indeed, consider a set of holes
that are Markov, say $\mathcal I_N$. The ratio $\frac{\rho(I_{i,N})}{\lambda(I_{i,N})}$ attains its maximum (see
Corollary \ref{cor_23234} below) in the holes that have the largest Poincar\'{e} return time (in this case $N$).
Now, suppose that $\lambda(I_{i,N+1}) \le \lambda(A(x)) \le \lambda(I_{i,N})$, $A(x) \subset I_{i,N}$, and
$\tau(A(x)) = \tau(I_{i,N})$. Then $\rho(A(x)) \approx \rho(I_{i,N})$, but
$$
\frac{\rho(A(x))}{\lambda(A(x))} \ge \frac{\rho(I_{i,N})}{\lambda(I_{i,N})}.
$$
Thus, by shrinking a Markov hole while keeping the same Poincar\'{e} return time we can increase the ratio of the
escape rate to the size of the hole.
\end{remark}

\subsection{More results for one hole.}\label{sec_4534626}

In the case of the doubling map with Markov holes we know precisely where to make a hole
to achieve maximum (or minimum) escape rate.

\begin{corollary}\label{cor_23234}
$$
\min_{1 \le i \le 2^N} \rho (I_{i,N}) = \rho(I_{1,N}), \quad \max_{1 \le i \le 2^N} \rho (I_{i,N}) = \rho(I_{2,N}),
$$
although the holes that give these extremes are not unique.
\end{corollary}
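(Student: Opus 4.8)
The plan is to translate the two extremal holes into symbolic words, compute their autocorrelations, and then invoke Lemma \ref{l_56345467}, which says that a \emph{larger} value of $corr$ produces a \emph{smaller} escape rate. Recall that the Markov hole $I_{i,N}$ corresponds to the cylinder $C_w(1)$ whose defining word $w$ is the $N$-bit binary expansion of $i-1$. Hence $I_{1,N}$ corresponds to the constant word $w=0^N$, and $I_{2,N}$ corresponds to $w=0^{N-1}1$. The corollary will follow once I show that these two words realize, respectively, the maximal and minimal value of $corr$ among all binary words of length $N$.

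The central observation is that, viewed as a binary number, $corr(w)=b_1\,2^{N-1}+\cdots+b_N$ always has its leading bit $b_1=1$, so every word satisfies $2^{N-1}\le corr(w)\le 2^N-1$. The upper bound $2^N-1$ is attained exactly when all bits $b_i=1$; since $b_{\ell+1}=1$ precisely when $w$ has period $\ell$ (equivalently, when $C_w(1)$ contains a periodic point of period $\ell$), having every period $1\le\ell<N$ forces $w$ to be periodic of period $1$, i.e. constant. Thus $corr(0^N)=2^N-1$. At the other extreme, the lower bound $2^{N-1}$ is attained exactly when $b_i=0$ for all $i\ge 2$, that is, when $w$ has no nontrivial period (is unbordered). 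I would check directly that $0^{N-1}1$ is unbordered: any coincidence of a prefix $0^k$ with a suffix would require that suffix to be all zeros, yet every nonempty suffix of $0^{N-1}1$ ends in $1$. Hence $corr(0^{N-1}1)=2^{N-1}$.

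Combining these two computations with Lemma \ref{l_56345467} yields the result: the hole $I_{1,N}$ has the largest autocorrelation, hence the smallest escape rate, while $I_{2,N}$ has the smallest autocorrelation, hence the largest escape rate. For the non-uniqueness clause, I would note that the constant word $1^N$ (the hole $I_{2^N,N}$) also attains $corr=2^N-1$, and that the unbordered word $10^{N-1}$ (the hole $I_{2^{N-1}+1,N}$) also attains $corr=2^{N-1}$; so neither extremal escape rate is realized by a single hole.

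Since Lemma \ref{l_56345467} already supplies all of the analytic content, the remaining work is purely combinatorial, and the only point that must be handled with care is the characterization of the extremal autocorrelation values through the period/border dictionary. The mildly delicate step is confirming that the leading-bit bounds $2^{N-1}\le corr(w)\le 2^N-1$ are tight and identifying which words saturate them; but this reduces to two elementary facts, namely that being periodic of every period $1\le\ell<N$ forces a constant word, and that minimal autocorrelation is equivalent to being unbordered, both of which are immediate for the explicit words exhibited above.
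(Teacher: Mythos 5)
Your proof is correct and is essentially the paper's own argument: the paper notes that $\tau(I_{i,N})$ ranges over $\{1,\ldots,N\}$ with $\tau(I_{1,N})=1$ and $\tau(I_{2,N})=N$ and then invokes Theorem \ref{thm_34213}, which is precisely your autocorrelation computation read through the dictionary $\min\{i\ge 2 : b_i=1\}=\tau+1$ established in that theorem's proof, so passing directly to $corr(0^N)=2^N-1$ and $corr(0^{N-1}1)=2^{N-1}$ and citing Lemma \ref{l_56345467} is the same reasoning with the middle step inlined. The only loose end (shared with the paper) is the case of ties, i.e.\ distinct holes whose words have equal autocorrelation, where the strict inequality of Lemma \ref{l_56345467} says nothing; this is settled by the formula $F_w(z)=z f_w(z)/(1+(z-2)f_w(z))$, which shows equal $corr$ forces equal escape rate.
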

\begin{proof}
Clearly, by Proposition \ref{p_32978}, one has that $1 \le \rho(I_{i,N}) \le N$.
Then, it is easy to check that $\rho(I_{1,N})=1$ and $\rho(I_{2,N}) = N$.
Thus the result follows from the Main theorem.
\end{proof}

Note that we obtained minimum escape rate in one more interval that contain a fixed point,
namely $I_{2^N,N}$. The maximum is obtained in the intervals that have a minimum period equal to $N$,
as Figure \ref{fig_esc_16} illustrates this.

\begin{figure}
\centering
\includegraphics[width=4in]{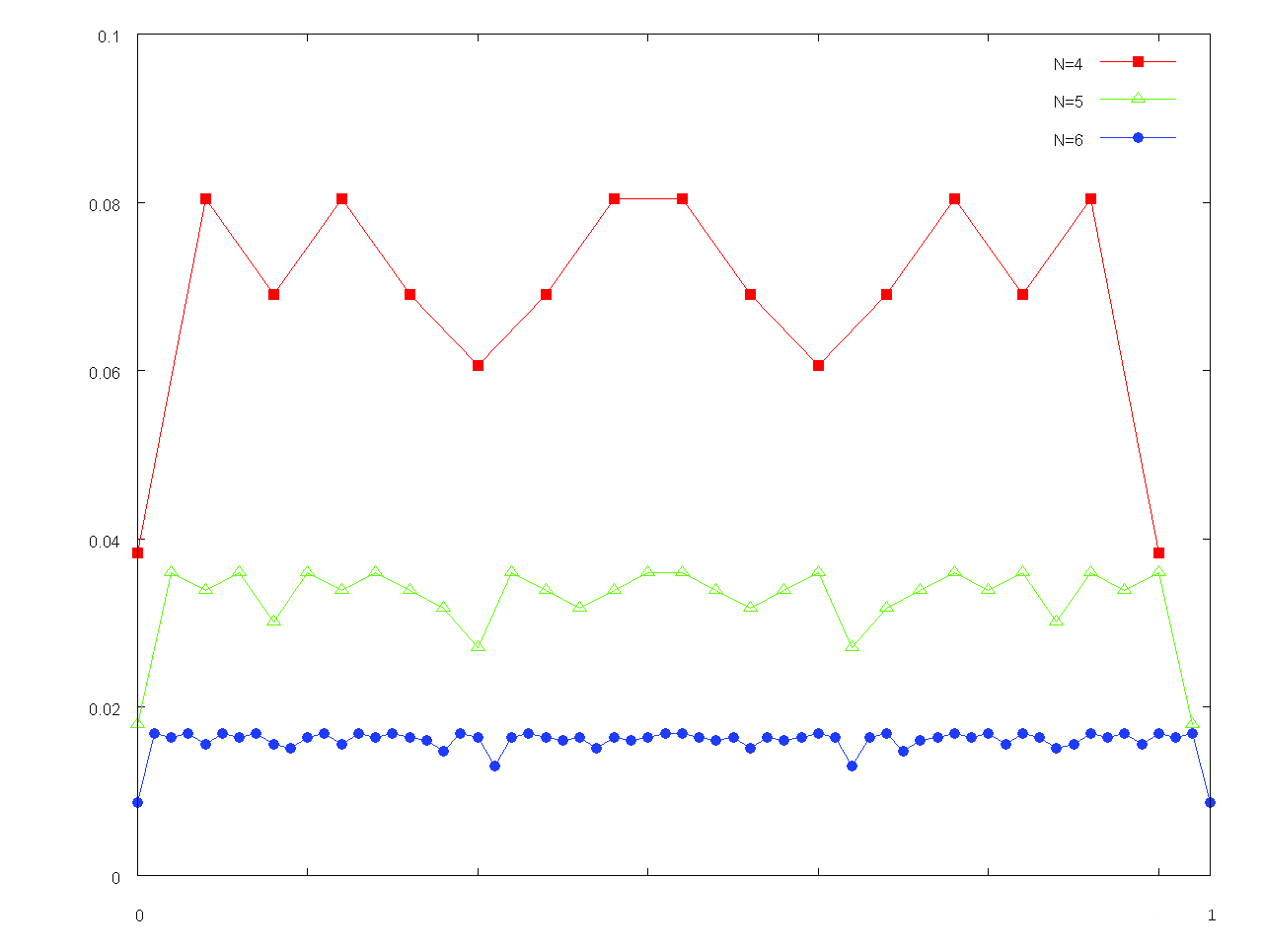}
\caption{Escape rate vs. position of the Markov hole of size $2^{-N}$.}
\label{mono-16-64}
\end{figure}
Next theorem states that the escape rate decreases monotonically as we decrease the size of the hole.
\begin{theorem}
$$
\max_{1 \le i \le 2^{N+1}} \rho (I_{i,N+1}) = \min_{1 \le i \le 2^N} \rho (I_{i,N}).
$$
\end{theorem}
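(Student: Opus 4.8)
The plan is to collapse both optimizations onto single extremal holes by quoting Corollary~\ref{cor_23234}, to restate the claimed equality as an equality of exponential growth rates of two word‑counting sequences, and then to prove that equality by an elementary two‑sided estimate. Applying Corollary~\ref{cor_23234} at levels $N$ and $N+1$ gives
\[
\min_{1\le i\le 2^N}\rho(I_{i,N})=\rho(I_{1,N}),\qquad
\max_{1\le i\le 2^{N+1}}\rho(I_{i,N+1})=\rho(I_{2,N+1}).
\]
In the symbolic coding, $I_{1,N}=[0,2^{-N}]$ is the cylinder generated by $w_0=\underbrace{0\cdots 0}_{N}$ (it carries the fixed point $0$, so $\tau(I_{1,N})=1$), whereas $I_{2,N+1}=[2^{-(N+1)},2^{-N}]$ is generated by $u_0=\underbrace{0\cdots 0}_{N}1$ of length $N+1$, whose autocorrelation is trivial, so $\tau(I_{2,N+1})=N+1$. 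Hence the theorem is equivalent to the single identity $\rho(C_{w_0}(1))=\rho(C_{u_0}(1))$.

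By Lemma~\ref{l_56453} one has $\rho(C_{w_0}(1))=-\ln(\theta_{w_0}/2)$ and $\rho(C_{u_0}(1))=-\ln(\theta_{u_0}/2)$, where $\theta_{w_0}$ and $\theta_{u_0}$ are the exponential growth rates of $c_{w_0}(k)$ and $c_{u_0}(k)$, the numbers of binary strings of length $k$ avoiding $0^N$, respectively $0^N1$, as a factor. Thus it suffices to prove $\theta_{w_0}=\theta_{u_0}$.

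The core step is to establish $c_{w_0}(k)\le c_{u_0}(k)\le (k+1)\,c_{w_0}(k)$. The left inequality is immediate, since a string avoiding $0^N$ a fortiori avoids $0^N1$. For the right inequality, the structural point is that in any string avoiding $0^N1$ every run of at least $N$ zeros must be terminal: were such a run followed by a later symbol, the first subsequent $1$ would produce an occurrence of $0^N1$. Consequently a string $s$ of length $k$ avoiding $0^N1$ is either $0^k$ or is determined by the position $p$ of its last $1$ together with the prefix $s[1..p]$, which ends in $1$ and, by the same observation, avoids $0^N$. Injecting the non‑zero strings into pairs (prefix avoiding $0^N$, length of the terminal zero‑run) and using that $c_{w_0}$ is nondecreasing (prepend a $1$), one obtains $c_{u_0}(k)\le 1+\sum_{p=1}^{k}c_{w_0}(p)\le (k+1)\,c_{w_0}(k)$. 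Taking $\tfrac1k\ln(\cdot)$ and letting $k\to\infty$ squeezes the two rates together, so $\theta_{u_0}=\theta_{w_0}$ and the theorem follows.

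The main obstacle is exactly this reverse inequality: I must argue that forbidding $0^N1$ rather than $0^N$ only buys the polynomial‑in‑$k$ freedom of choosing where the single terminal zero‑run begins, which is invisible to the exponential rate. As an independent confirmation one can compute the Guibas--Odlyzko rational generating functions \cite{GuiOdl1} of the two sequences and check that both have denominator $x^{N+1}-2x+1$ (equivalently, that $\theta_{w_0}=\theta_{u_0}$ is the largest root of $x^{N+1}-2x^{N}+1=0$); sharing the dominant singularity, the two sequences share the growth rate, which gives $\theta_{w_0}=\theta_{u_0}$ directly.
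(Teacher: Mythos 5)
Your proposal is correct, and after the common opening move (invoking Corollary~\ref{cor_23234} to reduce the claim to $\rho(I_{1,N})=\rho(I_{2,N+1})$, i.e.\ to comparing the growth rates of the counts $c_{w}(k)$ for $w=0^{N}$ and $u=0^{N}1$), it diverges genuinely from the paper's argument. The paper works through the Guibas--Odlyzko generating functions: it computes the correlation polynomials $f_{u}(z)=z^{N}$ and $f_{w}(z)=\sum_{j=0}^{N-1}z^{j}$, observes that $F_{u}$ and $F_{w}$ share the denominator $1-(2z^{-1}-z^{-(N+1)})$, extracts the exact relation $c_{w}(j)=c_{u}(j)-c_{u}(j-N)$ for $j>N$, and then uses the two-sided bounds $C_{1}\theta^{j}\le c_{u}(j)\le C_{2}\theta^{j}$ to conclude that $c_{w}(j)$ has the same exponential rate. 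You instead prove the purely combinatorial sandwich $c_{w}(k)\le c_{u}(k)\le (k+1)\,c_{w}(k)$ by the structural observation that in a string avoiding $0^{N}1$ any run of $N$ zeros must be terminal, so such a string is $0^{k}$ or is determined by a prefix ending in $1$ that avoids $0^{N}$ plus the length of the terminal zero-run; the polynomial factor $k+1$ then disappears under $\tfrac1k\ln(\cdot)$. Your route is more elementary and self-contained (it needs only the existence of the rates from Lemma~\ref{l_56453}, not the rational form of $F_{w}$), and it sidesteps a delicate point in the paper's final estimate, where the lower bound $\theta^{j}(C_{1}-C_{2}\theta^{-N})$ is only useful if $C_{1}-C_{2}\theta^{-N}>0$, which is not checked; the paper's route, in exchange, yields the exact recurrence between the two sequences and identifies the common characteristic polynomial $z^{N+1}-2z^{N}+1$ explicitly, which your closing remark recovers only as an aside. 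All steps of your injection argument (terminal-run observation, the prefix avoiding $0^{N}$, monotonicity of $c_{w}$ via prepending a $1$) check out.
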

\begin{proof}
It follows from Corollary \ref{cor_23234} that
$$
\max_{1 \le i \le 2^{N+1}} \rho (I_{i,N+1}) = \rho(I_{2,N+1}), \quad \min_{1 \le i \le 2^N} \rho (I_{i,N}) = \rho(I_{1,N}).
$$
Let $w_1$ and $w_2$ be two binary words that define holes $I_{2,N+1}$ and $I_{1,N}$, respectively.
then $|w_1|=N+1$ and $|w_2|=N$.

We now proceed in the following fashion.
First, we compute the generating functions
$F_{w_1}(z) = \sum_{j=0}^{\infty} c_{w_1}(j) z^{-j}$ and
$F_{w_2}(z) = \sum_{j=0}^{\infty} c_{w_2}(j) z^{-j}$ defined in Section \ref{s_123456}.
Next, we show that $c_{w_1}(n) \sim c_{w_2}(n)$ for $n \gg 1$.
Finally, using Lemma \ref{l_56453} we conclude that $\rho (I_{2,N+1}) =  \rho (I_{1,N})$.

The explicit analytic expression for the generating function was
found in \cite{GuiOdl1},
$$
F_{w}(z) = \frac{z \cdot f_w(z)}{1+(z-2) \cdot f_w(z)},
$$
where $f_w(z)$, as before, is a correlation polynomial of $w$.
It is easy to check that
$$
w_1 = \underset{N}{\underbrace{0 \ldots 0 }}1, \quad w_2 = \underset{N}{\underbrace{0 \ldots 0 }}
$$
and thus,
$$
corr(w_1) =[1 \underset{N}{\underbrace{0 \ldots 0 }}], \quad corr(w_2) =[\underset{N}{\underbrace{1 \ldots 1 }}].
$$
Therefore,
$$
f(w_1) = z^{N}, \quad f(w_2) = \sum_{j=0}^{N-1} z^j.
$$
After some tedious but straightforward algebra we arrive at
\begin{align*}
F_{w_1}(z) = \frac{1}{ 1 - (2 z^{-1}-z^{-(N+1)})}, \\
F_{w_2}(z) = \frac{1}{ 1 - (2 z^{-1}-z^{-(N+1)})} - \frac{z^{-N}}{ 1 - (2 z^{-1}-z^{-(N+1)})}.
\end{align*}

Let $t=z^{-1}$ and expand the above equations into power series. Then we get
\begin{align*}
F_{w_1}(z) = \sum_{j=0}^{\infty} \left( 2t - t ^{N+1}\right)^j,\\
F_{w_2}(z) = \sum_{j=0}^{\infty} \left( 2t - t ^{N+1}\right)^j - t^N \cdot \sum_{j=0}^{\infty} \left( 2t - t ^{N+1}\right)^j.
\end{align*}
Therefore, $\sum_{j=0}^{\infty} \left( 2t - t ^{N+1}\right)^j = \sum_{j=0}^{\infty}c_{w_1}(j) t^j$.
Then,
\begin{align*}
F_{w_1}(z) = \sum_{j=0}^{\infty}c_{w_1}(j) t^j,\\
F_{w_2}(z) = \sum_{j=0}^{N-1}c_{w_1}(j) t^j + \sum_{j=N}^{\infty}\left(c_{w_1}(j) - c_{w_1}(j-N) \right)t^j.
\end{align*}
Thus, for $j > N$ we obtain the following relationship:
$$
c_{w_2}(j) = c_{w_1}(j) - c_{w_1}(j-N).
$$

The equation \ref{e_764892370} implies that for $j \gg 1$,
$$
C_1 \theta_{w_1} ^j \le c_{w_1} (j) \le C_2 \theta_{w_1} ^j,
$$
for some constant $C_1$ and $C_2$.
Thus,
$$
\theta_{w_1}^j \left(C_1 - C_2 \theta_{w_1}^{-N} \right) \le c_{w_1}(j) - c_{w_1}(j-N) \le \theta_{w_1}^j \left(C_2 - C_1 \theta_{w_1}^{-N} \right)
$$
Hence, for $j \gg 1$ we have that $c_{w_2}(j)$ asymptotically behaves as $\theta_{w_1}^j$,
$c_{w_2}(j) \sim \theta_{w_1}^j$.
This finishes the proof.
\end{proof}

The next theorem deals with arbitrary (not necessarily the elements
of Markov partition) but sufficiently small holes.

\begin{theorem}
Suppose $x_1$ and $x_2$ are two periodic points with periods $m_1$ and $m_2$, respectively,
$m_1 < m_2$.
Then there exists a positive number $\varepsilon_0 = \varepsilon_0(m_1,m_2)$
such that for all subintervals $A_1$ and $A_2$ of the unit interval with
$$
x_i \in int(A_i), \quad i=1,2, \quad  \lambda(A_1) = \lambda(A_2) < \varepsilon_0,
$$
one has
$$
\rho(A_1) < \rho(A_2).
$$
\end{theorem}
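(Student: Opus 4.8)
The plan is to reduce the claim to a uniform version of the local escape rate (Corollary \ref{c_653457567}) and then exploit the gap between the two limiting constants. Recall that for a periodic point of period $m$ this limit is $1-2^{-m}$, and since $m_1<m_2$ we have $1-2^{-m_1}<1-2^{-m_2}$. Put $\delta=\tfrac13\left(2^{-m_1}-2^{-m_2}\right)>0$. It suffices to produce $\varepsilon_0=\varepsilon_0(m_1,m_2)$ so that for every interval $A$ with $x_i\in\operatorname{int}(A)$ and $\lambda(A)<\varepsilon_0$ one has $\bigl|\rho(A)/\lambda(A)-(1-2^{-m_i})\bigr|<\delta$. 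Granting this and using $\lambda(A_1)=\lambda(A_2)$, one gets $\rho(A_2)/\lambda(A_2)>(1-2^{-m_2})-\delta=(1-2^{-m_1})+2\delta>(1-2^{-m_1})+\delta>\rho(A_1)/\lambda(A_1)$, hence $\rho(A_2)>\rho(A_1)$.

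First I would record two easy ingredients. (a) Robust recurrence time: because $x_i$ has minimal period $m_i$, the separation $\gamma_i=\min_{0<k<m_i}\lvert\hat{T}^kx_i-x_i\rvert$ is positive; for $A$ small the image $\hat{T}^k(A)$ is an interval of length $2^k\lambda(A)$ about $\hat{T}^kx_i$, so it is disjoint from $A$ for $0<k<m_i$ while $\hat{T}^{m_i}(A)\supseteq A$. Thus $\tau(A)=m_i$ for every $A\ni x_i$ once $\lambda(A)<2^{-m_i}\gamma_i$, exactly as in the Markov case. (b) Self-similar nesting: with $g=\hat{T}^{m_i}$ and its $x_i$-fixing inverse branch $\psi$ (slope $2^{-m_i}$) one has $\psi(A)\subseteq A$ and $\lambda(\psi^k(A))=2^{-km_i}\lambda(A)$; iterating $\psi$ produces the geometric series $\sum_{k\ge0}2^{-km_i}=1/(1-2^{-m_i})$ that accounts for the slowly escaping mass shadowing the orbit of $x_i$ — precisely the series $\sum_k 2^{\,N-1-km_i}$ appearing in the autocorrelation computation behind Theorem \ref{thm_55252512}.

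The main obstacle is uniformity in the position of $x_i$ inside $A$. Monotonicity under inclusion (Proposition \ref{prop_23423}(a)) together with Corollary \ref{c_653457567} for the nested dyadic cells $I_N(x_i)$ handles intervals in which $x_i$ is roughly centered; but when $x_i$ lies close to an endpoint of $A$, the largest dyadic cell about $x_i$ contained in $A$ is much shorter than $\lambda(A)$, so the crude sandwich loses a multiplicative factor $2^{N_1-N_2}$ and cannot by itself deliver the $\delta$-estimate. To get through I would prove position-independence of the leading coefficient directly: set up a renewal equation for $\lambda(\Theta_n(A))$ organized by the first return of the orbit to a fixed neighborhood of $x_i$, show that the period-$m_i$ returns contribute exactly the factor $1/(1-2^{-m_i})$, and bound the contribution of the two endpoints of $A$ and of all non-periodic returns by $o(\lambda(A))$ uniformly in the placement of $x_i$. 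Combined with Lemma \ref{l_eq}, this makes $-\lim_n\tfrac1n\ln\lambda(\Theta_n(A))=\rho(A)$ depend, to first order in $\lambda(A)$, only on $\lambda(A)$ and $m_i$, which is the uniform estimate required in the reduction.

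I expect the genuinely delicate step to be the last one — showing that the non-periodic returns and the interval's two endpoints contribute only lower-order terms to $\rho(A)/\lambda(A)$, uniformly over all intervals of a given small length regardless of where $x_i$ sits in them. This is the same phenomenon as the first-order perturbation $1-\lambda_A=\lambda(A)\left(1-2^{-m_i}\right)+o(\lambda(A))$ of the leading eigenvalue of the punctured transfer operator, whose leading term is insensitive to the shape of the hole; if one prefers, that perturbative fact can be cited to supply the uniform estimate and close the argument.
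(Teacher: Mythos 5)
Your overall strategy is the same as the paper's: the paper proves this theorem in one line, ``The result follows directly from Corollary \ref{c_653457567}.'' You have correctly noticed that this is not quite immediate --- the corollary is a statement about a single nested sequence of intervals shrinking to $x_i$, whereas the theorem quantifies over \emph{all} intervals of a given small length with $x_i$ anywhere in their interior, so a uniformization step is genuinely needed. The problem is that the step you propose to supply it is not carried out: the renewal equation for $\lambda(\Theta_n(A))$ and the first-order perturbation of the punctured transfer operator are invoked but not proved, and the decisive estimate (that non-periodic returns and endpoint effects contribute only $o(\lambda(A))$, uniformly in the placement of $x_i$ inside $A$) is exactly the hard part and is left as a promise, resting on results the paper never establishes. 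As written, your argument is a plan rather than a proof.

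The gap closes far more cheaply, using only the corollary as stated. Argue by contradiction: if no $\varepsilon_0$ works, there are intervals $A_1^{(n)}\ni x_1$, $A_2^{(n)}\ni x_2$ with $\lambda(A_1^{(n)})=\lambda(A_2^{(n)})<1/n$ and $\rho(A_1^{(n)})\ge\rho(A_2^{(n)})$. Since $x_i\in int\bigl(A_i^{(n)}\bigr)$, each $A_i^{(n)}$ contains an interval $(x_i-\delta,x_i+\delta)$ for some $\delta>0$, so one can recursively choose indices $n_1<n_2<\cdots$ with $A_i^{(n_{k+1})}\subset(x_i-\delta_k,x_i+\delta_k)\subset A_i^{(n_k)}$ for both $i$ simultaneously, where $\delta_k$ is the distance from $x_i$ to $\partial A_i^{(n_k)}$ minimized over $i$. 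The subsequences are then nested with intersection $\{x_i\}$, Corollary \ref{c_653457567} applies verbatim to give $\rho\bigl(A_i^{(n_k)}\bigr)/\lambda\bigl(A_i^{(n_k)}\bigr)\to 1-2^{-m_i}$, and since the two lengths agree and $1-2^{-m_1}<1-2^{-m_2}$, this forces $\rho\bigl(A_1^{(n_k)}\bigr)<\rho\bigl(A_2^{(n_k)}\bigr)$ for large $k$ --- a contradiction. (Your separate observation that the dyadic sandwich $I_{N_1}(x)\subseteq A\subseteq I_{N_2}(x)$ can lose a large multiplicative factor when $x$ sits near an endpoint of $A$ is legitimate, but it is a criticism of the paper's proof of the corollary itself, not an obstacle to deducing this theorem from the corollary once the corollary is granted.)
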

\begin{proof}
The result follows directly from Corollary \ref{c_653457567}.
\end{proof}

\subsection{Contributions to the escape rate: the size of the hole vs. dynamics.}
The doubling map shows that the escape rate is not determined by the size of the hole alone.
The following examples demonstrate the possibility
of a larger escape rate into the smaller hole.

Consider two sets $A = [0,\frac{1}{4})$ and $B=[\frac{1}{4},\frac{1}{2})$.
Then $\lambda(A) = \lambda(B) = \frac{1}{4}$.
It easy to check that $\tau(A)=1$ and $\tau(B)=2$.
Thus it follows from Theorem \ref{thm_34213} that $\rho(A) < \rho(B)$.

On the other hand by Proposition \ref{prop_23423} it follows that
$\rho(\hat{T}^{-1}A \cup A) = \rho(A)$.
Thus, for two holes,
$$
\hat{T}^{-1}A \cup A = \left[0,\frac{1}{4}\right) \cup \left[\frac{1}{2}, \frac{5}{8}\right) \text{ and } B =\left[\frac{1}{4},\frac{1}{2}\right),
$$
one has
$$
\rho \left(\left[0,\frac{1}{4}\right) \cup \left[\frac{1}{2}, \frac{5}{8}\right)\right) < \rho\left(\left[\frac{1}{4},\frac{1}{2}\right)\right), \quad \lambda \left(\left[0,\frac{1}{4}\right) \cup \left[\frac{1}{2}, \frac{5}{8}\right)\right) > \lambda\left(\left[\frac{1}{4},\frac{1}{2}\right)\right).
$$

The next example shows that even if we have two holes which are connected sets
it is still possible to have a faster escape through a smaller one.
Consider two holes $A\cup B$ and $C$, where
$A=[0,\frac{1}{4})$, $B=[\frac{1}{4},\frac{5}{16})$, and $C=[\frac{1}{2},\frac{3}{4})$.
Note that $B \subset \hat{T}^{-2}A$.
Clearly, $\lambda(A\cup B)=\frac{5}{16} > \lambda(C)=\frac{4}{16}$.
It is easy to check that $\tau(A)=1$ and $\tau(C)=3$,
thus $\rho(A) < \rho(C)$. On the other hand, by Proposition \ref{prop_23423}
it follows that $\rho(A) = \rho(A \cup B)$.
Therefore we have that $\rho(A \cup B) < \rho(C)$.
Hence,
$$
\rho \left( \left[0,\frac{5}{16}\right) \right) < \rho \left( \left[\frac{1}{2},\frac{3}{4}\right) \right), \quad \lambda \left( \left[0,\frac{5}{16}\right) \right) > \lambda \left(  \left[\frac{1}{2},\frac{3}{4}\right) \right).
$$

In general, the following result shows that
there are holes of the arbitrarily large size with arbitrarily small escape rate.

\begin{theorem}
For any $\varepsilon \in (0,1)$ and any $r>0$ there exists a measurable set $A \subset [0,1]$
such that
$$
\lambda(A) > 1- \varepsilon, \quad \rho(A) < r.
$$
\end{theorem}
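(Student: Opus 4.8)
The plan is to build $A$ by starting from a very small hole whose escape rate is tiny and then \emph{inflating} it with preimages: this drives the measure toward $1$ while, by Proposition \ref{prop_23423}(c), leaving the escape rate untouched. The conceptual point guiding the construction is that one cannot make the escape rate small by merely repositioning a hole of fixed size (among the Markov holes of size $2^{-N}$ the minimum is attained at $I_{1,N}$ and stays bounded away from $0$ for fixed $N$); what forces the escape rate to $0$ is \emph{shrinking} the hole, and the resulting loss of measure is then recovered at no cost via preimages.

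First I would pick the fixed point $x=0$ (period $m=1$) and take as base hole $A_0:=I_{1,N}=[0,2^{-N}]$. By the local escape Theorem \ref{thm_55252512} we have $\rho(I_{1,N})/\lambda(I_{1,N})\to 1-2^{-1}=\tfrac12$, and since $\lambda(I_{1,N})=2^{-N}\to 0$ this yields $\rho(I_{1,N})\to 0$. Hence I may fix $N$ so large that $\rho(A_0)<r$.

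Next I would inflate $A_0$ by setting $A:=\bigcup_{i=0}^{m}\hat{T}^{-i}A_0=\Omega_m(A_0)$. Proposition \ref{prop_23423}(c) gives $\rho(A)=\rho(A_0)<r$ for every finite $m$. For the measure, the doubling map is ergodic, so condition \ref{H1} holds for $A_0$; and because $\Omega_m(A_0)=\bigcup_{i=0}^{m}\Theta_i(A_0)$ is a disjoint union, $\lambda(\Omega_m(A_0))=\sum_{i=0}^{m}\lambda(\Theta_i(A_0))\to\sum_{i=0}^{\infty}\lambda(\Theta_i(A_0))=1$ as $m\to\infty$. Thus I can choose $m$ large enough that $\lambda(A)=\lambda(\Omega_m(A_0))>1-\varepsilon$, and this $A$ has both required properties.

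The only step warranting care — what I regard as the crux — is the identity $\rho(\Omega_m(A_0))=\rho(A_0)$ behind Proposition \ref{prop_23423}(c). It holds because $\Omega_n(\Omega_m(A_0))=\Omega_{n+m}(A_0)$, so writing $b_k=\lambda([0,1]\setminus\Omega_k(A_0))$ one obtains
$$\rho(\Omega_m(A_0))=-\lim_{n\to\infty}\frac{1}{n}\ln b_{n+m}=-\lim_{n\to\infty}\frac{n+m}{n}\cdot\frac{1}{n+m}\ln b_{n+m}=\rho(A_0),$$
the index shift being absorbed by the factor $(n+m)/n\to 1$; in particular the limit defining $\rho(A)$ exists. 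Everything else is a direct application of the results cited above, so no further obstacle remains.
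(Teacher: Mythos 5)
Your proposal is correct and follows essentially the same route as the paper: take a small Markov hole $I_{1,N}$ with $\rho<r$, then inflate it by finitely many preimages, using Proposition \ref{prop_23423}(c) to keep the escape rate and ergodicity (condition \ref{H1}) to push the measure above $1-\varepsilon$. You merely fill in details the paper leaves implicit (why $\rho(I_{1,N})\to 0$, and the index-shift argument behind part (c)), which is fine.
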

\begin{proof}
We can always pick $N$ such that $\rho(I_{i,N}) < r$ for some $i$.
One has that
$$
\lambda \left( \bigcup _{j=0}^{\infty} \hat{T}^{-j} I_{i,N} \right) = 1,
$$
but
$$
\lambda \left( \bigcup _{j=0}^{n} \hat{T}^{-j} I_{i,N} \right) < 1
$$
for any finite $n$.
Thus for any $\varepsilon > 0$ we can find $n_0$ such that
$$
1 > \lambda \left( \bigcup_{j=0}^{n_0} \hat{T}^{-j} I_{i,N} \right) > 1 - \varepsilon.
$$
By Proposition \ref{prop_23423} one has that $\rho(I_{i,N}) =
\rho(\cup_{j=0}^{n_0} \hat{T}^{-j} I_{i,N})$. Now, set
$$
A = \bigcup_{j=0}^{n_0} \hat{T}^{-j} I_{i,N}.
$$
\end{proof}

\section{Some generalizations.}\label{sec:more}

\subsection{Linear expanding map.}
As was mentioned at the beginning of this section the same results
hold $x \mapsto \kappa x \mod 1$, $\kappa \in \mathbb N$ and $\kappa
> 1$.

\subsection{Tent map.}
The tent map is a map $\hat{T}$ of a unit interval to itself given by
$$
\hat{T}(x) = \left\{
\begin{array}{ll}
2x, & 0\leq x\leq \frac{1}{2} \\
2-2x, & \frac{1}{2}\leq x\leq 1
\end{array}
\right. .
$$


This map preserves a Lebesgue measure on $[0,1]$.
We can use the following correspondence between the tent map and the symbolic dynamics:
$s_n = 0$ if $\hat{T}^n x < 0.5$ and $s_n=1$ otherwise.
It can be easily shown that mapping $x \mapsto \{s_n \}$
is a metric conjugacy onto the left shift symbolic space.
Then one can repeat all the arguments that we used for the doubling map to obtain similar result.
Namely, for $I_{i,N}$ defined as before,
$$
I_{i,N} = \left[ \frac{i-1}{2^N}, \frac{i}{2^N} \right], \quad i=1 \ldots 2^N,
$$
the following statement holds.

\begin{theorem}
Consider a tent map.
$\forall N \in \mathbb N$ and $\forall i,j = 1,.., 2^N$ we have
that if $\tau(I_{j,N}) > \tau(I_{i,N})$ then
$\rho(I_{j,N}) > \rho(I_{i,N})$.
\end{theorem}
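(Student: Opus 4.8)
The plan is to reduce the tent-map statement to the symbolic results already proved for the doubling map, by passing through the metric conjugacy with the one-sided full shift and invoking the conjugacy invariance of \emph{both} the escape rate and the Poincar\'e recurrence time (Lemma \ref{l_32521}). Concretely, I would take $F : \Lambda^{+}_{\Omega(2)} \to [0,1]$ to be the coding map sending an itinerary $\{s_n\}$ to the point it codes, and verify that $F \circ \sigma = \hat{T} \circ F$, that $F$ pushes the Bernoulli measure $\hat{\lambda}$ forward to Lebesgue measure, and that $F$ is invertible off a set of measure zero (the preimages of the dyadic rationals, where the itinerary is ambiguous). Since the shift $\sigma$ is ergodic it satisfies condition \ref{H1}, so Lemma \ref{l_32521} applies and gives $\rho_{\hat{T}}(I_{i,N}) = \rho_{\sigma}(C_{w_i}(1))$ and $\tau_{\hat{T}}(I_{i,N}) = \tau_{\sigma}(C_{w_i}(1))$ for the cylinder $C_{w_i}(1) = F^{-1}(I_{i,N})$.

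First I would check that the level-$N$ itinerary partition of the tent map coincides with the dyadic partition $\mathcal I_N = \{I_{i,N}\}$. Both branches of $\hat{T}$ are affine of slope $\pm 2$ and map onto $[0,1]$, so their inverse branches are the contractions $y \mapsto y/2$ and $y \mapsto 1 - y/2$; iterating these $N$ times produces exactly $2^N$ intervals of length $2^{-N}$. Hence each $I_{i,N}$ is precisely the set of points sharing a fixed length-$N$ itinerary, and corresponds under $F^{-1}$ to a unique cylinder $C_{w_i}(1)$ with $|w_i| = N$. I would emphasize that the bijection $i \leftrightarrow w_i$ is \emph{not} the binary-expansion correspondence used for the doubling map -- because of the folding it is of reflected-binary (Gray-code) type -- but that this is immaterial, since the rest of the argument lives entirely in the shift space and uses only that each hole is \emph{some} position-one cylinder.

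With the conjugacy established it remains to rerun the symbolic argument of Theorem \ref{thm_34213} without change. In the shift space $\sigma^n(C_w(1)) \cap C_w(1)$ has positive measure exactly when the length-$n$ self-overlap of $w$ matches, i.e. when $b_{n+1} = 1$ in $corr(w) = [b_1 \ldots b_N]$; thus $\tau_{\sigma}(C_w(1))$ equals the first index after $b_1$ at which the autocorrelation is nonzero. Consequently $\tau(I_{j,N}) > \tau(I_{i,N})$ forces the first non-$b_1$ hit of $corr(w_j)$ to occur strictly later than that of $corr(w_i)$, so $corr(w_j) < corr(w_i)$. Lemma \ref{l_56345467} then yields $\rho_{\sigma}(C_{w_j}(1)) > \rho_{\sigma}(C_{w_i}(1))$, and transporting back through $F$ gives $\rho(I_{j,N}) > \rho(I_{i,N})$, as required.

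The hard part is therefore not the escape-rate estimates: Lemmas \ref{l_56453}, \ref{l_3743875}, and \ref{l_56345467} are purely combinatorial facts about words, living in the shift space and indifferent to which interval map induced the coding. The genuinely new work is the careful verification that the itinerary map is an honest metric conjugacy -- handling the measure-zero ambiguity set so that $F$ is invertible and measure-preserving almost everywhere -- together with the identity between the itinerary partition and the dyadic Markov partition. Once those two points are settled, every intermediate result of Section \ref{s_51223416} transfers verbatim and the theorem follows.
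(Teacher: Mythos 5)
Your proposal is correct and follows exactly the route the paper takes: the paper's own ``proof'' is the one-line observation that the itinerary map $x \mapsto \{s_n\}$ is a metric conjugacy onto the full one-sided $2$-shift, after which all the doubling-map arguments (Lemma \ref{l_32521}, the autocorrelation comparison, Lemma \ref{l_56345467}) are repeated verbatim. You in fact supply more detail than the paper does, in particular the verification that the itinerary cylinders coincide with the dyadic partition via a Gray-code relabelling.
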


\subsection{Logistic map.}
Consider a logistic map $\hat{T}: [0,1] \rightarrow [0,1]$ given by
$$
\hat{T} (x) = 4x(1-x), \quad x \in [0,1].
$$
It preserves a measure, $\lambda$,  (absolutely continuous with respect to Lebesgue measure)
with the density $\frac{1}{\pi \sqrt{x(1-x)}}$.


A logistic map and the tent map are metrically conjugate:
it is easy to check that conjugacy is given by the
transformation $y = \sin^2 \frac{\pi x}{2}$.

Thus, by Lemma  \ref{l_32521},
for $I_{i,N}$ (preimages of a finite Markov partition for $\hat{T}$) defined as
$$
I_{i,N} = \left[ \frac{2}{\pi} \arcsin \sqrt{\frac{i-1}{2^N}}, \frac{2}{\pi} \arcsin \sqrt{\frac{i}{2^N}} \right], \quad i=1 \ldots 2^N,
$$
the following theorem holds.
\begin{theorem}
Consider a logistic map. Suppose that $\tau(I_{j,N}) >
\tau(I_{i,N})$. Then, $\rho(I_{j,N}) > \rho(I_{i,N})$.
\end{theorem}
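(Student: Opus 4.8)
The plan is to obtain the statement for free from the tent map case by pushing it through the stated metric conjugacy, so that no new dynamics has to be examined. I would take $F(x) = \sin^2(\tfrac{\pi x}{2})$ as the conjugacy and cast it into the framework of Lemma \ref{l_32521} with the tent map in the role of $T_2$ and the logistic map in the role of $T_1$; a direct check of the two branches confirms $F \circ \hat{T}_{\mathrm{tent}} = \hat{T}_{\mathrm{log}} \circ F$, so $F$ really does conjugate the tent map to the logistic map. Writing $\tilde{I}_{i,N} = [\tfrac{i-1}{2^N}, \tfrac{i}{2^N}]$ for the dyadic Markov holes of the tent system, the key structural point is that the logistic holes of the statement are exactly their conjugate images, $I_{i,N} = F(\tilde{I}_{i,N})$; in particular $F$ sends the tent critical point $\tfrac12$ to the logistic critical point $\tfrac12$, so the Markov structure is carried across level by level.

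Before applying Lemma \ref{l_32521} I would verify its two hypotheses. First, the tent map is ergodic (indeed mixing) with respect to Lebesgue measure, so condition \ref{H1} holds for every hole of positive measure, which is all the lemma asks of $T_2$. Second, I would check that $F$ is genuinely measure-preserving from Lebesgue measure to the logistic invariant measure: since $F'(x) = \pi \sqrt{y(1-y)}$ with $y = F(x)$, the pushforward of $dx$ has density $\tfrac{1}{\pi\sqrt{y(1-y)}}$, which is precisely the invariant density of the logistic map. This short change of variables is the only genuine computation in the argument.

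The conclusion is then a two-step transfer. By part (2) of Lemma \ref{l_32521} the Poincar\'{e} recurrence times agree, $\tau(I_{i,N}) = \tau(\tilde{I}_{i,N})$ and $\tau(I_{j,N}) = \tau(\tilde{I}_{j,N})$, so the hypothesis $\tau(I_{j,N}) > \tau(I_{i,N})$ is equivalent to $\tau(\tilde{I}_{j,N}) > \tau(\tilde{I}_{i,N})$ in the tent system. The theorem already established for the tent map then yields $\rho(\tilde{I}_{j,N}) > \rho(\tilde{I}_{i,N})$. Finally, part (1) of Lemma \ref{l_32521} gives $\rho(I_{i,N}) = \rho(\tilde{I}_{i,N})$ and $\rho(I_{j,N}) = \rho(\tilde{I}_{j,N})$, so the strict inequality transfers back unchanged to $\rho(I_{j,N}) > \rho(I_{i,N})$, as desired.

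I do not expect a genuine obstacle: the entire dynamical content lives in the tent map theorem, and once $F$ is recognized as a measure-preserving conjugacy the result is immediate. The only points that require care are bookkeeping ones: correctly matching the holes across the conjugacy (so that the logistic holes really are the images of the dyadic tent holes rather than their preimages) and keeping straight which map plays the role of $T_2$ in Lemma \ref{l_32521}, since that lemma requires \ref{H1} of $T_2$ and states its conclusions for holes lying in the domain of the conjugacy.
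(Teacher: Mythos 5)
Your proposal is correct and follows essentially the same route as the paper, which likewise disposes of this theorem by citing the conjugacy $y=\sin^2\frac{\pi x}{2}$ together with Lemma \ref{l_32521}, letting the tent-map theorem carry all the dynamical content; you merely spell out the hypothesis checks (condition \ref{H1} for the tent map and the measure-preservation of $F$) that the paper leaves implicit. One small remark: your identification $I_{i,N}=F(\tilde I_{i,N})$ is the one that makes the argument work, whereas the paper's displayed formula for $I_{i,N}$ uses $\frac{2}{\pi}\arcsin\sqrt{\cdot}$, i.e., $F^{-1}$ of the dyadic endpoints, which appears to be a slip in the source rather than in your argument.
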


\subsection{Baker's map.}
A Baker's map is an example of two dimensional invertible hyperbolic
chaotic map. This map, $\hat{T}: [0,1] \times [0,1] \to [0,1] \times
[0,1]$, is defined in the following way:
$$
\hat{T} (x,y) = \left( 2x \mod 1,  \frac{1}{2}(y+\lfloor 2x \rfloor) \mod 1 \right),
$$
where $\lfloor x \rfloor$ is an integer part of $x$.
It preserves the Lebesgue measure and is mixing (and, therefore, ergodic).

It was observed numerically in \cite{Tel} that escape rate in this
system depends on the position of the hole. The Markov holes for
this map are the rectangles given by
$$
I_{i,j,N,M} = \left[ \frac{i-1}{2^N}, \frac{i}{2^N} \right] \times  \left[ \frac{j-1}{2^M}, \frac{j}{2^M} \right], \quad i=1 \ldots 2^N, \quad j=1 \ldots 2^M.
$$

It is well known that Baker's map is conjugate to the full binary shift.
Thus the following statement holds.

\begin{theorem}
Consider the Baker's map $\hat{T}$.
Then $\forall N, M \in \mathbb N$, $\forall i_1,i_2 = 1,.., 2^N$, and $\forall j_1,j_2 = 1,.., 2^M$
we have
that if $\tau(I_{i_1,j_1,N,M}) > \tau(I_{i_2,j_2,N,M})$ then
$\rho(I_{i_1,j_1,N,M}) > \rho(I_{i_2,j_2,N,M})$.
\end{theorem}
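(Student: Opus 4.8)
The plan is to reduce the statement to the word-combinatorics already developed for the doubling map, exploiting the conjugacy of the Baker's map with the two-sided full binary shift. First I would fix the standard coding $(x,y) \mapsto s = (s_n)_{n \in \mathbb Z} \in \{0,1\}^{\mathbb Z}$ in which the binary digits of $x = 0.x_1 x_2 \ldots$ are placed at the positive positions, $s_k = x_k$ for $k \ge 1$, and the digits of $y = 0.y_1 y_2 \ldots$ are placed at the nonpositive positions, $s_0 = y_1, s_{-1}=y_2,\ldots$. A direct computation with $\hat T(x,y) = (2x \bmod 1, \tfrac12(y + \lfloor 2x \rfloor) \bmod 1)$ shows that $\hat T$ becomes the left shift $(\sigma s)_k = s_{k+1}$, so $\hat T$ is metrically conjugate to $(\{0,1\}^{\mathbb Z}, \sigma, \hat\lambda)$ with the Bernoulli measure. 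Under this coding the rectangle $I_{i,j,N,M}$ fixes the $N$ future symbols $s_1,\ldots,s_N$ and the $M$ past symbols $s_0,\ldots,s_{-M+1}$, hence it is a cylinder determined by a single contiguous binary word $w$ of length $N+M$ occupying positions $-M+1,\ldots,N$. Let $w^{(1)}$ and $w^{(2)}$ be the length-$(N+M)$ words coding $I_{i_1,j_1,N,M}$ and $I_{i_2,j_2,N,M}$.

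By Lemma \ref{l_32521} both the escape rate and the Poincar\'e recurrence time are conjugacy invariants, so it suffices to compute them for the shift. For the survival probability I would note that $\sigma^i(s)$ lies in the cylinder iff $w$ occurs in $s$ starting at position $-M+1+i$; as $i$ ranges over $0,\ldots,n$, these are exactly the $n+1$ possible starting positions inside the block $s_{-M+1}\ldots s_{N+n}$ of length $N+M+n$, while all other coordinates are unconstrained and factor out of the product measure. Hence $1 - \lambda(\Omega_n(I)) = c_w(N+M+n)/2^{N+M+n}$, which is precisely the quantity analyzed in Lemma \ref{l_56453} with word length $N+M$ in place of $N$; consequently $\rho(I) = -\ln(\theta_w/2)$ with $\theta_w$ controlled by the Guibas--Odlyzko estimate \ref{e_764892370}.

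For the recurrence time I would argue, exactly as for the doubling map, that $\sigma^{\ell}(C_w) \cap C_w$ has positive measure iff $w$ overlaps itself after a shift by $\ell$, i.e. iff $b_{\ell+1} = 1$ in $corr(w) = [b_1 \ldots b_{N+M}]$ (for $\ell \ge N+M$ the constrained positions become disjoint and the intersection is automatically positive). Therefore $\tau(I)$ is the least $\ell \ge 1$ with $b_{\ell+1}=1$, with $\tau(I) = N+M$ when no such $\ell < N+M$ exists, and a larger recurrence time forces the first nontrivial $1$ of the autocorrelation to occur later, making $corr(w)$ strictly smaller as a binary number. Since $w^{(1)}$ and $w^{(2)}$ have equal length, $\tau(I_{i_1,j_1,N,M}) > \tau(I_{i_2,j_2,N,M})$ yields $corr(w^{(1)}) < corr(w^{(2)})$, and Lemma \ref{l_56345467} (via Lemma \ref{l_3743875}) then gives $\rho(I_{i_1,j_1,N,M}) > \rho(I_{i_2,j_2,N,M})$, so the conclusion follows verbatim from the doubling-map argument of Theorem \ref{thm_34213}.

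I expect the main obstacle to be the bookkeeping in the two reductions above: verifying that taking the preimage $\hat T^{-i}$ of a genuinely two-sided cylinder constrains precisely one additional symbol per step, and that the unconstrained past and far-future coordinates integrate out, so that the two-dimensional escape problem collapses exactly onto the one-dimensional factor-avoidance count $c_w$. Once this identification is secured no new combinatorics is required, since every quantity on the shift side has already been handled in Section \ref{s_51223416}.
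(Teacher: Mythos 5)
Your proposal is correct and follows exactly the route the paper intends: the paper's entire ``proof'' is the one-line remark that the Baker's map is conjugate to the full (two-sided) binary shift, after which the word-avoidance machinery of Section \ref{s_51223416} is implicitly invoked. You have simply supplied the details the paper omits — the explicit bi-infinite coding, the identification of $I_{i,j,N,M}$ with a contiguous cylinder word of length $N+M$, the reduction of the survival probability to $c_w(N+M+n)/2^{N+M+n}$, and the autocorrelation characterization of $\tau$ — all of which are accurate.
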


\section{Conclusions.}\label{sec:end}

Apparently there are still other natural and interesting unexplored
questions on the dynamics of open dynamical systems. In this paper
we dealt with one such problem, the dependence of the escape rate on
the position of a hole. We demonstrated that dynamical
characteristics can play as important role for the escape rate as
the size (measure) of the hole does.

Here we just scratch the surface of this new area in the theory of
open dynamical systems dealing with the uniformly hyperbolic
systems. In general, the effect of distortion (variability of values
of the derivative or Jacobian of the dynamical system) can also
essentially contribute to the escape through a hole of a given size.
This problem will be addressed in another paper.

\section*{Acknowledgements.}\label{sec:thanks}
Authors are grateful to V. Afraimoivich, M. Demers,  M. Pollicott, and T. T\'{e}l for numerous helpful
discussions.

\end{document}